\numberwithin{equation}{section}
\newtheorem{theorem}{Theorem}
\newtheorem{lemma}{Lemma}
\newtheorem{proposition}{Proposition}
\theoremstyle{definition}
\theoremstyle{remark}
\begin{document}

\title[Asymptotic Classification of SDEs with State--Independent Noise]
{Classification of the Asymptotic Behaviour of Globally Stable
Differential Equations with Respect to State--independent
Stochastic Perturbations}

\author{John A. D. Appleby}
\address{Edgeworth Centre for Financial Mathematics, School of Mathematical
Sciences, Dublin City University, Glasnevin, Dublin 9, Ireland}
\email{john.appleby@dcu.ie} \urladdr{webpages.dcu.ie/\textasciitilde
applebyj}

\author{Jian Cheng}
\address{Edgeworth Centre for Financial Mathematics, School of Mathematical
Sciences, Dublin City University, Glasnevin, Dublin 9, Ireland}
\email{jian.cheng2@mail.dcu.ie}

\author{Alexandra Rodkina}
\address{The University of the West Indies, Mona Campus
Department of Mathematics and Computer Science Mona, Kingston 7,
Jamaica} \email{alexandra.rodkina@uwimona.edu.jm}

\thanks{The first two authors gratefully acknowledges Science Foundation Ireland for the support of this research under the Mathematics Initiative 2007 grant 07/MI/008 ``Edgeworth Centre for Financial
Mathematics''.} \subjclass{60H10; 93E15; 93D09; 93D20}
\keywords{stochastic differential equation, asymptotic stability,
global asymptotic stability, simulated annealing, fading stochastic
perturbations}
\date{4 November 2012}

\begin{abstract}
In this paper we consider the global stability of solutions of a nonlinear
stochastic differential equation. The differential equation
is a perturbed version of a globally stable linear autonomous
equation with unique zero equilibrium where the diffusion
coefficient is independent of the state. Contingent on a dissipative condition
characterising the asymptotic stability of the unperturbed equation,
necessary and sufficient conditions on the rate of decay of the noise intensity
for the solution of the equation to be a.s. globally asymptotically
stable, contingent on some weak and noise independent reversion towards the equilibrium
when the solution is far from equilibrium. Under a stronger equilibrium reverting condition,
we may classify whether the solution globally asymptotically
stable, stable but not asymptotically stable, and unstable, each
with probability one purely in terms of the asymptotic intensity of the noise.
Sufficient conditions guaranteeing the different types of
asymptotic behaviour which are more readily checked are developed.
\end{abstract}

\maketitle

\section{Introduction}
In this paper, we characterise the stability, boundedness and instability of
the unique and globally stable equilibrium of a deterministic ordinary differential
equation when it is subjected to a stochastic perturbation
independent of the state. More specifically, we consider the asymptotic behaviour of solutions
of the $d$--dimensional stochastic differential equation
\begin{equation}  \label{eq.intromainstocheqn}
dX(t)=-f(X(t))\,dt + \sigma(t)\,dB(t),\quad t\geq 0; \quad
X(0)=\xi\in \mathbb{R}^d
\end{equation}
where $B$ is an $r$--dimensional standard Brownian motion,
$f:\mathbb{R}^d\to \mathbb{R}^d$ is a continuous
function and $\sigma\in C([0,\infty);\mathbb{R}^{d\times r})$, the continuity 
guaranteeing the existence of local solutions of \eqref{eq.intromainstocheqn}. 
There is no loss of generality in assuming that the unique equilibrium be at 0, so 
the equation without a stochastic perturbation is therefore
\begin{equation}\label{eq.unperturbed}
x'(t)=-f(x(t)), \quad t>0; \quad x(0)=\xi,
\end{equation}
and in order to guarantee the globally asymptotic stability we require that
 \begin{equation} \label{eq.xglobalstable}
\lim_{t\to\infty} x(t;\xi)=0 \text{ for all $\xi\in \mathbb{R}^d$}.
\end{equation}
This implies that $f(x)=0$ if and only if $x=0$. To characterise global asymptotic stability even for \eqref{eq.unperturbed} is difficult, so in general deterministic research has focussed on giving sufficient conditions under which all solutions of \eqref{eq.unperturbed}
obey $x(t)\to 0$ as $t\to\infty$. A popular assumption in the stochastic literature is the so called \emph{dissipative condition} 
\begin{equation} \label{eq.introfglobalunperturbed}
\langle x,f(x) \rangle>0 \quad \text{for all $x\neq 0$}.
\end{equation}
The dissipative condition ensures that $x=0$ is the unique equilibrium, for if there were another at $x^\ast\neq 0$, then we have
$0=\langle x^\ast,0\rangle = \langle x^\ast, f(x^\ast) \rangle >0$, a contradiction. We see also that in the one--dimensional deterministic 
case, the condition $xf(x)>0$ for $x\neq 0$, which characterises the existence of a unique and globally stable equilibrium, is nothing other than the dissipative
condition \eqref{eq.introfglobalunperturbed}. The proof that \eqref{eq.introfglobalunperturbed} implies \eqref{eq.xglobalstable} simply involves 
showing that the Liapunov function $V(x(t))=\|x(t)\|^2_2$ is decreasing on trajectories. 

The question naturally arises: if the solution $x$ of \eqref{eq.unperturbed}
obeys \eqref{eq.xglobalstable}, under what conditions on $f$ and
$\sigma$ does the solution $X$ of \eqref{eq.intromainstocheqn} obey
\begin{equation} \label{eq.stochglobalstable}
\lim_{t\to\infty} X(t,\xi)=0, \quad \text{a.s. for each
$\xi\in\mathbb{R}^d$}.
\end{equation}
The convergence phenomenon captured in \eqref{eq.stochglobalstable}
for the solution of \eqref{eq.intromainstocheqn} is often called
almost sure \emph{global convergence} (or \emph{global stability}
for the solution of \eqref{eq.unperturbed}), because the solution of
the perturbed equation \eqref{eq.intromainstocheqn} converges to the
zero equilibrium solution of the underlying unperturbed equation
\eqref{eq.unperturbed}.

In the case when $d=1$ i.e., for the scalar equation, a series of
papers has progressively lead to a characterisation of the almost
sure convergence embodied in \eqref{eq.stochglobalstable}. It was
shown in Chan and Williams~\cite{ChanWill:1989} that if $f$ is
strictly increasing with $f(0)=0$ and $f$ obeys
\begin{equation} \label{eq.ftoinfty}
\lim_{x\to\infty} f(x)=\infty, \quad \lim_{x\to-\infty}
f(x)=-\infty,
\end{equation}
then the solution $X$ of \eqref{eq.intromainstocheqn} obeys
\eqref{eq.stochglobalstable} holds if $\sigma$ obeys
\begin{equation} \label{eq.sigmalogto0}
\lim_{t\to\infty} \sigma^2(t)\log t = 0.
\end{equation}
Moreover, Chan and Williams also proved, if $t\mapsto\sigma^2(t)$ is
decreasing to zero, that if the solution $X$ of
\eqref{eq.intromainstocheqn} obeys \eqref{eq.stochglobalstable},
then $\sigma$ must obey \eqref{eq.sigmalogto0}. These results were
extended to finite--dimensions by Chan in~\cite{Chan:1989}. The
results in~\cite{ChanWill:1989,Chan:1989} are motivated by problems
in simulated annealing.

In Appleby, Gleeson and Rodkina~\cite{JAJGAR:2009}, the monotonicity
condition on $f$ and \eqref{eq.ftoinfty} were relaxed. It was shown
if $f$ obeys \eqref{eq.floclip} and
\eqref{eq.introfglobalunperturbed}, and in place of
\eqref{eq.ftoinfty} also obeys
\begin{equation} \label{eq.fliminfinfty}
\text{There exists $\phi>0$ such that }\phi:=\liminf_{|x|\to\infty}
|f(x)|,
\end{equation}
then the solution $X$ of \eqref{eq.mainstocheqn} obeys
\eqref{eq.stochglobalstable} holds if $\sigma$ obeys
\eqref{eq.sigmalogto0}. The converse of Chan and Williams is also
established: if $t\mapsto\sigma^2(t)$ is decreasing, and the
solution $X$ of \eqref{eq.mainstocheqn} obeys
\eqref{eq.stochglobalstable}, then $\sigma$ must obey
\eqref{eq.sigmalogto0}. Moreover, it was also shown, without
monotonicity on $\sigma$, that if
\begin{equation} \label{eq.sigmalogtoinfty}
\lim_{t\to\infty} \sigma^2(t)\log t = +\infty,
\end{equation}
then the solution $X$ of \eqref{eq.mainstocheqn} obeys
\begin{equation} \label{eq.stochunstable}
\limsup_{t\to\infty} |X(t,\xi)|=+\infty, \quad \text{a.s. for each
$\xi\in\mathbb{R}$}.
\end{equation}
Furthermore, it was shown that the condition \eqref{eq.sigmalogto0}
could be replaced by the weaker condition
\begin{equation} \label{eq.capSigma0}
\lim_{t\to\infty} \int_0^t e^{-2(t-s)}\sigma^2(s)\,ds \cdot \log\log
\int_0^t \sigma^2(s)e^{2s}\,ds = 0
\end{equation}
and that \eqref{eq.capSigma0} and \eqref{eq.sigmalogto0} are
equivalent when $t\mapsto \sigma^2(t)$ is decreasing. In fact, it
was even shown that if $\sigma^2$ is not monotone decreasing,
$\sigma$ does not have to satisfy \eqref{eq.sigmalogto0} in order
for $X$ to obey \eqref{eq.stochglobalstable}.

Finally, in \cite{JAAR:2010a}, it was shown under the scalar version
of condition \eqref{eq.introfglobalunperturbed} that the solution
$X$ of \eqref{eq.mainstocheqn} obeys \eqref{eq.stochglobalstable} if
and only if $\sigma$ obeys
\begin{equation} \label{eq.sigmaiffXto0scalar}
S_h(\epsilon):= \sum_{n=1}^\infty \sqrt{\int_{nh}^{(n+1)h}
\sigma^2(s)\,ds} \cdot
\exp\left(-\frac{\epsilon^2}{2\int_{nh}^{(n+1)h}
\sigma^2(s)\,ds}\right)
\end{equation}
for every $\epsilon>0$. It can therefore be seen that this result
does not require monotonicity conditions on $\sigma$ or on $f$ in
order to characterise the global convergence of solutions of
\eqref{eq.intromainstocheqn}, nor asymptotic information on $f$ such
as \eqref{eq.fliminfinfty}. Moreover it is shown that if
\eqref{eq.sigmaiffXto0scalar} does not hold, then
$\mathbb{P}[X(t)\to 0 \text{ as $t\to\infty$}]=0$ for any
$\xi\in\mathbb{R}$.


In this paper, we extend the results of \cite{JAAR:2010a} to finite
dimensions. Our first main result (Theorem~\ref{theorem.Xiffsigma}) shows
that if $f$ obeys \eqref{eq.introfglobalunperturbed} and is continuous, and $\sigma$ is also continuous, then any
solution $X$ of \eqref{eq.intromainstocheqn} obeys
\eqref{eq.stochglobalstable} if and only if
\begin{multline} \label{eq.sigmaiffXto0}
S_h'(\epsilon)=\sum_{n=0}^\infty
 \sqrt{\int_{nh}^{(n+1)h} \|\sigma(s)\|^2_F\,ds} \cdot
\exp\left(-\frac{\epsilon^2}{2\int_{nh}^{(n+1)h} \|\sigma(s)\|^2_F\,ds}\right)<+\infty,\quad
\\\text{for every $\epsilon>0$},
\end{multline}
provided that $f$ obeys
\begin{equation} \label{eq.introfasy}
\text{There exists $\phi>0$ such that }  \phi:=\liminf_{x\to\infty}
\inf_{\|y\|=x}\langle y,f(y)\rangle,
\end{equation}
a condition weaker than, but similar to, \eqref{eq.fliminfinfty}. We note that in the scalar case the assumption \eqref{eq.introfasy} is not needed
in order to characterise global stability; all that is required is the scalar analogue of \eqref{eq.sigmaiffXto0}. It is also notable that the assumption of
Lipschitz continuity can be dispensed with, the potential cost being that there may be more than one solution of the differential equation \eqref{eq.intromainstocheqn}. Of course, if $f$ is additionally assumed to be locally Lipschitz continuous, or obey a one--sided Lipschitz continuity condition, then there is a unique continuous adapted process obeying \eqref{eq.intromainstocheqn}.

In the case when \eqref{eq.introfasy} is not assumed, it can still be
shown that if \eqref{eq.sigmaiffXto0} does not hold, then
\[
\mathbb{P}[X(t,\xi)\to 0 \text{ as $t\to\infty$}]=0 \text{ for each
$\xi\in\mathbb{R}^d$}.
\]
Also, if \eqref{eq.sigmaiffXto0} holds, the only possible limiting
behaviour of solutions are that $X(t)\to 0$ as $t\to\infty$ or
$\|X(t)\|\to\infty$ as $t\to\infty$ (Theorem~\ref{theorem.Xdatanof}). If the noise intensity is sufficiently small, in the sense that $\sigma\in
L^2(0,\infty)$, it can be shown that $X$ obeys \eqref{eq.stochglobalstable} without any
further conditions on $f$. In the case when the sum in \eqref{eq.sigmaiffXto0} is infinite for all $\epsilon>0$, it can  be
shown \emph{a fortiori} that $\limsup_{t\to\infty} \|X(t)\|=+\infty$ a.s., while if $S_h'(\epsilon)$ is finite for some $\epsilon$ but infinite for others, it can be shown that  $\limsup_{t\to\infty} \|X(t)\|$ is bounded below by a constant a.s. These results are the subject of Theorem~\ref{theorem.Xlowerboundunbound}.

The other major result in the paper (Theorem~\ref{theorem.Xclassify}) gives a complete classification of
the asymptotic behaviour of solutions of \eqref{eq.mainstocheqn}
under a strengthening of \eqref{eq.introfasy}, namely
\begin{equation}\label{eq.introfasybounded}
\liminf_{r\to\infty} \inf_{\|x\|=r}\frac{\langle
x,f(x)\rangle}{\|x\|}=+\infty,
\end{equation}
which is a direct analogue of the condition needed
to give a classification of solutions of \eqref{eq.mainstocheqn} in
the scalar case. We show that solutions of \eqref{eq.mainstocheqn}
are either (a) convergent to zero with probability one (b) bounded, not convergent to zero,
but approach zero arbitrarily close infinitely often with probability one or (c) are unbounded
with probability one. Possibility (a) occurs when $S_h'(\epsilon)$ is finite for all $\epsilon$; (b)
happens when $S_h'(\epsilon)$ is finite for some $\epsilon$, but infinite for others, and (c) occurs
when $S_h'(\epsilon)$ is infinite for all $\epsilon$. Once again, we do not need the assumption of Lipschitz continuity.

It was shown in \cite{JAJCAR:2012} that these conditions characterised the stability, boundedness and unboundedness of solutions of
affine stochastic differential equations with the same state--independent diffusion coefficient, contingent on the deterministic part
of the equation yielding globally stable solutions. Therefore, we see that the long--run behaviour demonstrates relatively little
sensitivity to the type of nonlinearity present in the dirft term. In fact, this lack of sensitivity is even more pronounced when one considers stability within the class of SDEs with dissipative drift condition, because as the same asymptotic behaviour results irrespective of the strength of the nonlinearity $f$, provided that $f$ is of order $1/\|x\|$ or greater
as $\|x\|\to\infty$, as characterised by \eqref{eq.introfasy}.

Although \eqref{eq.sigmaiffXto0} is necessary and sufficient for $X$ to obey $\lim_{t\to\infty} X(t)=0$
a.s., these conditions may be hard to apply in practice. For this
reason we also deduce sharp sufficient conditions on $\sigma$ which
enable us to determine for which value of $\epsilon$ the function
$S_h'(\epsilon)$ is finite. One such condition is
the following: if it is known for some $c>0$ that
\[
\lim_{t\to\infty} \int_t^{t+c} \|\sigma(s)\|^2_F\,ds \log t = L\in
[0,\infty],
\]
then $L=0$ implies that $X$ tends to zero a.s.; $L$ being positive
and finite implies $X$ is bounded, but does not converge to zero;
and $L$ being infinite implies $X$ is unbounded. This result is stated as Theorem~\ref{theorem.XsiglogtL}.
In the case when
$t\mapsto \|\sigma(t)\|^2=:\Sigma_1(t)^2$ or $t\mapsto\int_t^{t+1}
\|\sigma(s)\|^2\,ds=:\Sigma_2(t)^2$ are nonincreasing functions, it
can also be seen that $X(t)\to 0$ as $t\to\infty$ a.s. is equivalent
to $\lim_{t\to\infty}\Sigma_i(t)^2\log t=0$; this is the subject of Theorem~\ref{theorem.eqvt}.

The main results are proven by showing that the stability of
\eqref{eq.intromainstocheqn} is intimately connected with the the
stability of a linear SDE with the same diffusion coefficient
(Theorem~\ref{th.xto0yto0}). The asymptotic behaviour of the linear SDE
has been characterised in \cite{JAJCAR:2012}, and the relevant results are restated here for
the reader's convenience.
As to the organisation of the paper, notation, and
statements and discussion about main results are presented in
Section 2, with the proofs of these results being in the main part
deferred to Section 3. The proof concerning upper bounds on the solution turns out to present 
the most challenges, and accordingly the enrirety of Section 4 is devoted to its proof.

\section{Statement and Discussion of Main Results}
\subsection{Notation}
In advance of stating and discussing our main results, we introduce
some standard notation. Let $d$ and $r$ be integers. We denote by
$\mathbb{R}^d$ $d$--dimensional real--space, and by
$\mathbb{R}^{d\times r}$ the space of $d\times r$ matrices with real
entries. Here $\mathbb{R}$ denotes the set of real numbers. We
denote the maximum of the real numbers $x$ and $y$ by $x\vee y$ and
the minimum of $x$ and $y$ by $x\wedge y$. If $x$ and $y$ are in
$\mathbb{R}^d$, the standard innerproduct of $x$ and $y$ is denoted
by $\langle x,y\rangle$. The standard Euclidean norm on
$\mathbb{R}^d$ induced by this innerproduct is denoted by $\|\cdot\|$.
If $A\in \mathbb{R}^{d\times r}$, we denote the entry in the $i$--th
row and $j$--th column by $A_{ij}$. For $A\in \mathbb{R}^{d\times
r}$ we denote the Frobenius norm of $A$ by
\[
\|A\|_F=\left( \sum_{j=1}^r \sum_{i=1}^d \|A_{ij}\|^2\right)^{1/2}.
\]
Let $C(I;J)$ denote the space of continuous functions $f:I\to J$
where $I$ is an interval contained in $\mathbb{R}$ and $J$ is a
finite dimensional Banach space. We denote by
$L^2([0,\infty);\mathbb{R}^{d\times r})$ the space of Lebesgue
square integrable functions $f:[0,\infty)\to\mathbb{R}^{d\times r}$
such that $\int_0^\infty \|f(s)\|_F^2\,ds < + \infty$.

\subsection{Set--up of the problem}
Let $d$ and $r$ be integers. We fix a complete filtered probability
space $(\Omega,\mathcal{F}, (\mathcal{F}(t))_{t\geq 0},\mathbb{P})$.
Let $B$ be a standard $r$--dimensional Brownian motion which is
adapted to $(\mathcal{F}(t))_{t\geq 0}$. We consider the stochastic
differential equation
\begin{equation} \label{eq.mainstocheqn}
dX(t)=-f(X(t))\,dt + \sigma(t)\,dB(t), \quad t\geq 0; \quad
X(0)=\xi\in\mathbb{R}^d.
\end{equation}
We suppose that
\begin{equation} \label{eq.fglobalunperturbed}
f\in C(\mathbb{R}^d;\mathbb{R}^d); \quad \langle x,f(x)\rangle >0,
\quad x\neq 0; \quad f(0)=0,
\end{equation}
and that $\sigma$ obeys
\begin{equation} \label{eq.sigmacns}
\sigma \in C([0,\infty);\mathbb{R}^{d\times r}).
\end{equation}
To simplify the existence and uniqueness of a unique continuous
adapted solution of \eqref{eq.mainstocheqn} on $[0,\infty)$, we
may assume that
\begin{equation} \label{eq.floclip}
\text{$f$ is locally Lipschitz continuous}.
\end{equation}
See e.g., \cite{Mao1}. This ensures the existence of a unique solution up to
a (possibly infinite) explosion time. In the case that there is a unique continuous adapted process obeying \eqref{eq.mainstocheqn}, we refer to it as the (local) solution of
\eqref{eq.mainstocheqn}. Another Lipschitz--like condition on $f$ which guarantees the uniqueness of solutions is
that there exists $K\geq 0$ such that
\begin{equation} \label{eq.onesidedlip}
-\langle x-y,f(x)-f(y)\rangle \leq K\|x-y\|^2_2 \quad\text{for all $x,y\in \mathbb{R}^d$}.
\end{equation}
This is often referred to as a one--sided Lipschitz condition. This condition is not inconsistent with
\eqref{eq.fglobalunperturbed}: notice that putting $y=0$ in \eqref{eq.onesidedlip} yields $\langle x,f(x)\rangle \geq -K\|x\|^2_2$ for all $x\in\mathbb{R}^d$, which is true by \eqref{eq.fglobalunperturbed}.
In the case when the equation is scalar, and $K=0$, then \eqref{eq.onesidedlip} is nothing other than the monotonicity
of $f$, a hypothesis favoured by Chan and Williams in their asymptotic analysis.

Therefore, it remains to answer the question as to whether \eqref{eq.floclip} can be relaxed and still ensure the existence of a local solution, and also whether the local solution is global. Granted that $f$ is continuous, the answer to the question of the existence of a local solution is positive. Regardless of whether local solutions are unique, it is standard to show that any local solution exists on $[0,\infty)$ a.s. This is guaranteed by the dissipative condition in \eqref{eq.fglobalunperturbed}. Therefore, any local solution is global. These claims are justified in the next result.
\begin{proposition} \label{prop.exist}
Suppose that $f$ obeys \eqref{eq.fglobalunperturbed} and that $\sigma$ obeys \eqref{eq.sigmacns}. Then there exists
a continuous adapted process that obeys \eqref{eq.mainstocheqn} for all $t\geq 0$ a.s.
\end{proposition}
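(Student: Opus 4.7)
The plan is to first obtain a local continuous adapted solution $X$ of \eqref{eq.mainstocheqn} and then use the dissipative structure to show that no finite--time explosion occurs. For local existence, the state--independence of the noise is decisive: setting $M(t):=\int_0^t \sigma(s)\,dB(s)$, which is continuous and adapted since $\sigma$ is continuous, any continuous adapted solution must take the form $X=Y+M$ where $Y$ satisfies, pathwise, the random ODE
\[
Y'(t)=-f(Y(t)+M(t,\omega)),\quad Y(0)=\xi.
\]
Since $f$ is continuous, Peano's theorem produces, for each $\omega$, a solution on some maximal interval. To obtain an adapted version I would approximate $f$ by locally Lipschitz $f_n\to f$ uniformly on compacts, solve the corresponding SDEs to produce unique continuous adapted $X^{(n)}=M+Y^{(n)}$, and use the Lyapunov estimate below (which is uniform in $n$) together with the integral representation $Y^{(n)}(t)=\xi-\int_0^t f_n(Y^{(n)}(s)+M(s))\,ds$ to deduce pathwise equicontinuity of $\{Y^{(n)}\}$ on compacts. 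Arzel\`a--Ascoli then furnishes a subsequential limit $X=Y+M$ that satisfies \eqref{eq.mainstocheqn}, with adaptedness inherited (modulo a standard measurable selection of the subsequence) from the adapted approximations.

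The non--explosion argument is the substantive content. Let $\tau_n:=\inf\{t\geq 0:\|X(t)\|\geq n\}$ and $\tau_\infty:=\lim_{n\to\infty}\tau_n$. Applying It\^o's formula to $V(x)=\|x\|^2$ on $[0,t\wedge\tau_n]$ gives
\begin{align*}
\|X(t\wedge\tau_n)\|^2 &= \|\xi\|^2 + \int_0^{t\wedge\tau_n}\bigl[-2\langle X(s),f(X(s))\rangle+\|\sigma(s)\|_F^2\bigr]\,ds \\
&\quad + 2\int_0^{t\wedge\tau_n}\langle X(s),\sigma(s)\,dB(s)\rangle.
\end{align*}
By the dissipative condition in \eqref{eq.fglobalunperturbed} the bracketed integrand is bounded above by $\|\sigma(s)\|_F^2$, and the stopped stochastic integral is a genuine martingale because its integrand is bounded in Frobenius norm by $n\,\|\sigma(s)\|_F$ on $[0,\tau_n]$. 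Taking expectations,
\[
\mathbb{E}\|X(t\wedge\tau_n)\|^2 \leq \|\xi\|^2 + \int_0^t\|\sigma(s)\|_F^2\,ds =: C(t),
\]
which is finite for each $t\geq 0$ by continuity of $\sigma$. Chebyshev's inequality then yields $\mathbb{P}[\tau_n\leq t]\leq n^{-2}C(t)\to 0$ as $n\to\infty$, so $\tau_\infty=+\infty$ almost surely, and $X$ extends to a continuous adapted process on $[0,\infty)$.

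The hard part will be the adaptedness in the local existence step, since the lack of Lipschitz continuity forbids a Picard iteration and Peano's theorem need not produce a unique solution; the approximation/compactness scheme above, which invokes no hypothesis beyond continuity and dissipativity, is the cleanest way I know to handle this, though it does rely on a measurable subsequence selection. The non--explosion calculation, by contrast, is entirely routine once a local adapted solution is in hand: it uses only the sign of $\langle x,f(x)\rangle$ and the local square--integrability of $\sigma$, and requires no growth hypothesis on $f$ at infinity — hence no appeal to conditions such as \eqref{eq.introfasy} or \eqref{eq.introfasybounded} is needed at this stage.
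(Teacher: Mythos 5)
Your proposal is correct and follows essentially the same route as the paper: both reduce \eqref{eq.mainstocheqn} pathwise to a random ODE driven by a state--independent continuous adapted process (the paper subtracts the process $Y$ of \eqref{def.Y}, so its random ODE is $z'=-f(z+Y)+Y$, while you subtract the raw integral $M=\int_0^\cdot\sigma(s)\,dB(s)$ -- an immaterial difference), invoke Peano's theorem for local existence, and rule out explosion by the identical It\^o/dissipativity estimate $\mathbb{E}\|X(t\wedge\tau_n)\|^2\leq\|\xi\|^2+\int_0^t\|\sigma(s)\|_F^2\,ds$, which the paper turns into a contradiction and you into a Chebyshev bound; like the paper, you correctly use only \eqref{eq.fglobalunperturbed} and local square integrability of $\sigma$, with no growth condition on $f$. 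The one place you genuinely diverge is the adaptedness of the pathwise Peano solution: the paper simply asserts that $z$, like $Y$, is a functional of $B$, whereas you propose a locally Lipschitz approximation $f_n\to f$ plus an Arzel\`a--Ascoli extraction. Be aware that this add--on, as sketched, needs more than you state: mollified or otherwise approximated $f_n$ need not inherit the dissipative condition, so the ``uniform in $n$'' Lyapunov bound is not automatic; and that bound is an expectation estimate obtained through stopping times, which yields tightness but not the a.e.--$\omega$ boundedness and equicontinuity on compacts required for a pathwise Arzel\`a--Ascoli argument (pathwise, dissipativity controls $\langle y+M,f(y+M)\rangle$ but not the term $\langle M,f(y+M)\rangle$, so no pathwise Gronwall bound comes for free). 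Since the paper's own proof does no more than assert measurability at this point, this does not put your argument behind the paper's, but the approximation scheme should not be presented as complete without those repairs; the non--explosion part of your proof is fine as it stands.
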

The proof is quite routine, and we make no claim that this represents an advance in substance or in sophistication on extant results in the 
existence theory of stochastic differential equations. However, we find it convenient to fashion an existence result that makes use of the types of 
hypotheses on $f$ and $\sigma$ that are of significance when making a study of the asymptotic behaviour of \eqref{eq.mainstocheqn}, and these considerations 
lead us to include the result and its proof here.  

\subsection{Asymptotic classification of an affine equation}
In this section, we state some results proven in Appleby, Cheng and Rodkina~\cite{JAJCAR:2012} which concern
the classification of affine stochastic differential equations (i.e., equations in which $f$ is a linear function). It transpires
that it is enough for the purposes of the current work to understand the behaviour for a single affine stochastic differential equation. The crucial property of this equation is that it has the same diffusion coefficient as
the solution $X$ of \eqref{eq.mainstocheqn} to tend to zero. The desired process $Y$ is defined to be the unique continuous
adapted process which obeys the stochastic differential equation
\begin{equation} \label{def.Y}
dY(t)=-Y(t)\,dt + \sigma(t)\,dB(t), \quad t\geq 0; \quad Y(0)=0.
\end{equation}
Note that $Y$ has the representation
\begin{equation}   \label{eq.Yform}
Y(t)=e^{-t}\int_0^t e^s \sigma(s)\,dB(s), \quad t\geq 0.
\end{equation}
%
Define
\begin{equation} \label{def.Shpr}
S_h'(\epsilon)=\sum_{n=1}^\infty \sqrt{\int_{nh}^{(n+1)h} \|\sigma(s)\|^2_F\,ds} \cdot
\exp\left(-\frac{\epsilon^2}{2\int_{nh}^{(n+1)h} \|\sigma(s)\|^2_F\,ds}\right),
\end{equation}
Since $S_h'$ is a monotone function of $\epsilon$, it is the case
that either (i) $S_h'(\epsilon)$ is finite for all $\epsilon>0$; (ii)
there is $\epsilon'>0$ such that for all $\epsilon>\epsilon'$ we
have $S_h'(\epsilon)<+\infty$ and $S_h'(\epsilon)=+\infty$ for all
$\epsilon<\epsilon'$; and (iii) $S_h'(\epsilon)=+\infty$ for all
$\epsilon>0$.

%

Armed with these observations, we see that the following theorem, which appears in \cite{JAJCAR:2012}
characterises the pathwise asymptotic behaviour of solutions of \eqref{def.Y}.  In the scalar
case it yields a result of Appleby, Cheng and Rodkina in~\cite{JAJCAR:2011dresden} when $h=1$. It is also of utility
when considering the relationship between the asymptotic behaviour
of solutions of stochastic differential equations and the asymptotic
behaviour of uniform step--size discretisations.
\begin{theorem}  \label{theorem.Yclassify}
Suppose that $\sigma$ obeys \eqref{eq.sigmacns} and $Y$ is the
unique continuous adapted process which obeys \eqref{def.Y}.
Suppose that $S_h'$ is defined by \eqref{def.Shpr}.
\begin{itemize}
\item[(A)] If
\begin{equation} \label{eq.thetastableh}
\text{$S_h'(\epsilon)$ is finite for all $\epsilon>0$},
\end{equation}
then
\begin{equation} \label{eq.Ytto0}
\lim_{t\to\infty} Y(t)=0, \quad\text{a.s.}
\end{equation}
\item[(B)] If there exists $\epsilon'>0$ such that
\begin{equation}  \label{eq.thetaboundedh}
\text{$S_h'(\epsilon)$ is finite for all $\epsilon>\epsilon'$}, \quad
\text{$S_h'(\epsilon)=+\infty$ for all $\epsilon<\epsilon'$},
\end{equation}
then there exists deterministic $0<c_1\leq c_2<+\infty$ such that
\begin{equation} \label{eq.Ytboundedh}
c_1\leq \limsup_{t\to\infty} \|Y(t)\|\leq c_2, \quad \text{a.s.}
\end{equation}
Moreover
\begin{equation} \label{eq.liminfYaveY0}
\liminf_{t\to\infty} \|Y(t)\|=0, \quad\lim_{t\to\infty}
\frac{1}{t}\int_0^t \|Y(s)\|^2\,ds=0, \quad\text{a.s.}
\end{equation}
\item[(C)] If
\begin{equation} \label{eq.thetaunstableh}
\text{$S_h'(\epsilon)=+\infty$ for all $\epsilon>0$},
\end{equation}
then
\begin{equation} \label{eq.Ytunstable}
\limsup_{t\to\infty} \|Y(t)\|=+\infty, \quad \text{a.s.}
\end{equation}
\end{itemize}
\end{theorem}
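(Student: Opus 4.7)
The plan is to exploit the Gaussian structure of $Y$ and reduce the continuous--time trichotomy to a discrete Borel--Cantelli argument on block increments. Since the integrand in \eqref{eq.Yform} is deterministic, $Y$ is centred Gaussian; iterating the It\^o integral over blocks of length $h$ gives the linear recursion
\[
Y((n+1)h)=e^{-h}Y(nh)+Z_n,\qquad Z_n:=e^{-(n+1)h}\int_{nh}^{(n+1)h}e^{s}\sigma(s)\,dB(s),
\]
where $(Z_n)_{n\geq 0}$ are independent centred Gaussian vectors whose second moment $E[\|Z_n\|^2]$ is comparable, with constants depending only on $h$, to $v_n:=\int_{nh}^{(n+1)h}\|\sigma(s)\|_F^2\,ds$.

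The key technical input would be two--sided Gaussian tail estimates of the form $P(\|Z_n\|>\epsilon)\asymp\exp(-\epsilon^2/(2v_n))$ up to polynomial prefactors in $v_n$ and $\epsilon$ and up to a dimension--dependent rescaling of $\epsilon$. Because convergence of the relevant series is governed solely by the exponential rate, summability of $\sum_n P(\|Z_n\|>\epsilon)$ is equivalent to finiteness of $S_h'(\lambda\epsilon)$ for a constant $\lambda=\lambda(d,h)$. Thus condition \eqref{eq.thetastableh} (respectively \eqref{eq.thetaboundedh}, \eqref{eq.thetaunstableh}) translates exactly into summability of $\sum_n P(\|Z_n\|>\epsilon)$ for all $\epsilon>0$ (respectively for some but not all $\epsilon$, for no $\epsilon$).

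Granted this dictionary, the three cases are handled as follows. For (A), the first Borel--Cantelli lemma gives $\|Z_n\|\to0$ a.s., and since $e^{-h}<1$ iteration of the recursion yields $Y(nh)\to 0$; \eqref{eq.Ytto0} then follows by splitting $Y(t)$ on $[nh,(n+1)h]$ as $e^{-(t-nh)}Y(nh)$ plus a fresh It\^o integral, whose block supremum is controlled via a Doob maximal inequality and a further Borel--Cantelli step. For (C), the second Borel--Cantelli (using independence of the $Z_n$) gives $\|Z_n\|>N$ infinitely often for every $N$; combined with $\|Z_n\|\leq\|Y((n+1)h)\|+e^{-h}\|Y(nh)\|$ this delivers \eqref{eq.Ytunstable}. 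For (B), the upper bound in \eqref{eq.Ytboundedh} follows from $\|Z_n\|\leq 2\epsilon'$ eventually together with iteration of the contraction, while the positive lower bound and $\liminf_{t\to\infty}\|Y(t)\|=0$ follow from the second Borel--Cantelli applied just below $\epsilon'$ and at arbitrarily small scales respectively, together with the centred Gaussian symmetry $Y(t)\stackrel{d}{=}-Y(t)$ to force passages through small neighbourhoods of the origin.

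The main obstacles are twofold. First, producing the two--sided Gaussian tail estimate in dimension $d>1$: the law of $\|Z_n\|$ depends on all eigenvalues of $\Cov(Z_n)$, not merely on $v_n$, and possibly rank--deficient covariances must be treated uniformly in $n$. I would reduce to the largest eigenvalue $\lambda_n$ of $\Cov(Z_n)$ for the exponential rate and use the sandwich $v_n/d\leq\lambda_n\leq v_n$ to trade $\lambda_n$ for $v_n$, absorbing the resulting constants into the rescaling of $\epsilon$. Second, the Cesaro statement in \eqref{eq.liminfYaveY0}: the It\^o isometry gives $E[\|Y(s)\|^2]=e^{-2s}\int_0^s e^{2u}\|\sigma(u)\|_F^2\,du$, and \eqref{eq.thetaboundedh} forces $v_n=O(1/\log n)$, so that $\int_0^t E[\|Y(s)\|^2]\,ds=o(t)$; I would then upgrade this $L^1$ convergence to almost sure convergence by a Borel--Cantelli argument along a geometric subsequence combined with the a priori uniform bound on $\|Y\|$ already established in the proof of (B).
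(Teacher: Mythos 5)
You should first note that this paper does not actually prove Theorem~\ref{theorem.Yclassify}: it is imported from the companion paper \cite{JAJCAR:2012} and only restated here, so there is no in-paper proof to match against. That said, your skeleton — the block recursion $Y((n+1)h)=e^{-h}Y(nh)+Z_n$ with independent Gaussian increments, two-sided tails via Mills' ratio as in \eqref{eq.mills}, first/second Borel--Cantelli, and a maximal inequality for the intra-block fluctuations — is exactly the circle of ideas the paper itself deploys for the closely related process $Y_0$ in Lemma~\ref{lemma.Y0bounded}, and it does deliver parts (A), (C), and the two-sided bound \eqref{eq.Ytboundedh} in (B) (the dimension- and $h$-dependent rescaling of $\epsilon$ is harmless there, since the three regimes of $S_h'$ and the constants $c_1,c_2$ are invariant under such rescaling; just remember that the continuous-time upper bound in (B) needs the same intra-block maximal control you invoked only in (A)).

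The genuine gaps are in your treatment of \eqref{eq.liminfYaveY0}. First, the route you propose for $\liminf_{t\to\infty}\|Y(t)\|=0$ — second Borel--Cantelli ``at arbitrarily small scales'' plus the symmetry $Y(t)\stackrel{d}{=}-Y(t)$ — does not work in dimension $d>1$: divergence of $\sum_n\mathbb{P}[\|Z_n\|>\epsilon]$ only forces large increments infinitely often, the events $\{\|Y(nh)\|\le\delta\}$ are not independent (so the second Borel--Cantelli lemma is unavailable for them), and a continuous $\mathbb{R}^d$-valued path oscillating between antipodal regions need not pass near the origin, so the scalar sign-change argument has no analogue. The easy correct route is to note that \eqref{eq.thetaboundedh} forces $v_n:=\int_{nh}^{(n+1)h}\|\sigma(s)\|_F^2\,ds\to0$, hence $\mathbb{E}\|Y(t)\|^2=e^{-2t}\int_0^te^{2s}\|\sigma(s)\|_F^2\,ds\to0$, so $\mathbb{P}[\|Y(nh)\|\le\delta]\to1$ and $\mathbb{P}[\limsup_n\{\|Y(nh)\|\le\delta\}]=1$ for every $\delta>0$. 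Second, your upgrade of the Cesàro statement from $L^1$ to a.s.\ is unsupported: \eqref{eq.thetaboundedh} does \emph{not} imply $v_n=O(1/\log n)$ (summability of $S_h'(\epsilon)$ tolerates sparse blocks with much larger $v_n$), so you only know $\frac1t\int_0^t\mathbb{E}\|Y(s)\|^2\,ds\to0$ with no rate, and then Markov plus Borel--Cantelli along a geometric subsequence gives terms that tend to zero but need not be summable; boundedness of $\|Y\|$ alone does not close this. You need either a variance bound for the time average (using Gaussianity and the exponential decorrelation of $Y$) or, more simply, the It\^o expansion of $\|Y(t)\|^2$ together with the martingale strong law — precisely the argument this paper uses for $X$ when proving \eqref{eq.aveXfXintto0} — which yields $\frac1t\int_0^t\|Y(s)\|^2\,ds\to0$ a.s.\ directly from $\frac1t\int_0^t\|\sigma(s)\|_F^2\,ds\to0$ and the already-established a.s.\ boundedness of $Y$.
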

The conditions and form of Theorem~\ref{theorem.Yclassify}, as well
as other theorems in this section, are inspired by those of
\cite[Theorem 1]{ChanWill:1989} and by \cite[Theorem 6, Corollary
7]{JAARMR:2009}.

Another result from \cite{JAJCAR:2012} that of is utility is that the parameter $h>0$ in Theorem~\ref{theorem.Yclassify}, while potentially of interest for numerical simulations, plays no role in classifying the dynamics of \eqref{def.Y}.
Therefore, we may take $h=1$ without loss of generality.
\begin{proposition} \label{prop.ShS1}
Suppose that $S_h'$ is defined by \eqref{def.Shpr}.
\begin{itemize}
\item[(i)] If  $S_1'(\epsilon)<+\infty$ for all $\epsilon>0$, then for each $h>0$ we have $S_h'(\epsilon)<+\infty$ for all $\epsilon>0$.
\item[(ii)] If there exists $\epsilon'>0$ such that $S_1'(\epsilon)<+\infty$ for all $\epsilon>\epsilon'$ and
$S_1'(\epsilon)=+\infty$ for all $\epsilon<\epsilon'$, then for each $h>0$ there exists $\epsilon_h'>0$ such that
 $S_h'(\epsilon)<+\infty$ for all $\epsilon>\epsilon_h'$ and  $S_h'(\epsilon)=+\infty$ for all $\epsilon<\epsilon_h'$.
\item[(iii)] If  $S_1'(\epsilon)=+\infty$ for all $\epsilon>0$, then for each $h>0$ we have $S_h'(\epsilon)=+\infty$ for all $\epsilon>0$.
\end{itemize}
\end{proposition}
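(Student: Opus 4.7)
The plan is to reduce the three-way dichotomy for $S_h'$ to the corresponding dichotomy for $S_1'$ by establishing two quantitative comparison bounds: for each $h>0$ I aim to produce constants $C_i(h), c_i(h) > 0$ such that
\[
S_h'(\epsilon) \leq C_1(h)\, S_1'(\epsilon/c_1(h)) \quad \text{and} \quad S_1'(\epsilon) \leq C_2(h)\, S_h'(\epsilon/c_2(h)) \quad \text{for every } \epsilon > 0.
\]
Once these are in place, (i) and (iii) are immediate, since the rescaled arguments $\epsilon/c_i(h)$ range over all of $(0,\infty)$ as $\epsilon$ does. For (ii) I would observe that $S_h'(\epsilon)$ is nonincreasing in $\epsilon$ (each summand decreases in $\epsilon$), so $S_h'$ has its own finite/infinite dichotomy at some threshold $\epsilon_h'$; the two inequalities then trap $\epsilon_h'$ inside the bounded interval $[\epsilon^\ast/c_2(h),\, c_1(h)\,\epsilon^\ast]$.

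To construct the comparison bounds I would first record two elementary properties of $\phi_\epsilon(x) := \sqrt{x}\exp(-\epsilon^2/(2x))$, $x>0$: (a) $\phi_\epsilon$ is strictly increasing on $(0,\infty)$ (a short derivative computation), and (b) the scaling identity $\phi_\epsilon(cx) = \sqrt{c}\,\phi_{\epsilon/\sqrt{c}}(x)$ holds for every $c>0$. Property (b) is the crucial device: it lets a multiplicative constant appearing inside the argument of $\phi_\epsilon$ be absorbed into the parameter $\epsilon$.

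Next I would introduce the common refinement of the $h$--partition and the unit partition of $[0,\infty)$. Set $J_n = [nh,(n+1)h)$, $I_k = [k,k+1)$ and $\Sigma_{n,k} = \int_{J_n \cap I_k} \|\sigma(s)\|^2_F\,ds$, so that $\int_{nh}^{(n+1)h}\|\sigma\|_F^2\,ds = \sum_k \Sigma_{n,k}$ and $\int_k^{k+1}\|\sigma\|_F^2\,ds = \sum_n \Sigma_{n,k}$. An elementary geometric count shows that each $J_n$ meets only finitely many $I_k$ and each $I_k$ meets only finitely many $J_n$; let $N_h$ and $M_h$ respectively denote uniform bounds on these counts, each depending only on $h$. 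Then $\int_{nh}^{(n+1)h}\|\sigma\|_F^2\,ds \leq N_h \max\{\int_k^{k+1}\|\sigma\|_F^2\,ds : J_n \cap I_k \neq \emptyset\}$, so by monotonicity (a), scaling (b) and bounding the max by the sum,
\[
\phi_\epsilon\!\left(\int_{nh}^{(n+1)h}\|\sigma\|_F^2\,ds\right) \leq \sqrt{N_h}\sum_{k:\, J_n \cap I_k \neq \emptyset}\phi_{\epsilon/\sqrt{N_h}}\!\left(\int_k^{k+1}\|\sigma\|_F^2\,ds\right).
\]
Summing over $n$ and exchanging the order of summation (each $k$ is visited at most $M_h$ times) yields $S_h'(\epsilon) \leq \sqrt{N_h}\, M_h\, S_1'(\epsilon/\sqrt{N_h})$; the reverse inequality $S_1'(\epsilon) \leq \sqrt{M_h}\, N_h\, S_h'(\epsilon/\sqrt{M_h})$ follows by the symmetric argument with the roles of the two partitions interchanged.

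The entire argument is essentially combinatorial bookkeeping combined with the two properties of $\phi_\epsilon$, so I do not anticipate a substantive obstacle. The only subtlety is that the constants $N_h$, $M_h$, $\sqrt{N_h}$ and $\sqrt{M_h}$ must depend only on $h$ and not on $\epsilon$; this is precisely what guarantees that the comparison bounds genuinely translate the dichotomy for $S_1'$ into a dichotomy (at some possibly different threshold) for $S_h'$.
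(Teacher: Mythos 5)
Your argument is correct, but it is not the paper's route: this paper does not actually prove Proposition~\ref{prop.ShS1} at all --- it is quoted from the companion work \cite{JAJCAR:2012}, and the justification implicit in the surrounding text is indirect and probabilistic: each of the three conditions on $S_h'$ characterises one of three mutually exclusive a.s.\ behaviours of the solution $Y$ of \eqref{def.Y} (Theorem~\ref{theorem.Yclassify}), and since the behaviour of $Y$ knows nothing about $h$, the classification class of $S_h'$ cannot depend on $h$. Your proof is instead a direct, purely deterministic comparison: writing $\phi_\epsilon(x)=\sqrt{x}\exp(-\epsilon^2/(2x))$, you use its monotonicity in $x$ and the exact scaling $\phi_\epsilon(cx)=\sqrt{c}\,\phi_{\epsilon/\sqrt{c}}(x)$, together with the fact that each interval $[nh,(n+1)h)$ meets at most $N_h$ unit intervals and each unit interval meets at most $M_h$ of the $h$--intervals, to obtain the two-sided bounds $S_h'(\epsilon)\leq \sqrt{N_h}\,M_h\,S_1'(\epsilon/\sqrt{N_h})$ and $S_1'(\epsilon)\leq \sqrt{M_h}\,N_h\,S_h'(\epsilon/\sqrt{M_h})$, from which (i) and (iii) are immediate and (ii) follows from the monotonicity of $S_h'$ in $\epsilon$, with the bonus of the explicit threshold localisation $\epsilon'/\sqrt{M_h}\leq \epsilon_h'\leq \epsilon'\sqrt{N_h}$. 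The chain of estimates (sum $\leq N_h\times$max, monotonicity, scaling, max $\leq$ sum, then exchanging the order of summation with multiplicity at most $M_h$) is sound; the only housekeeping points are the convention that a summand with $\int\|\sigma\|_F^2\,ds=0$ is read as zero, which makes the inequalities trivial for such terms, and the observation (which you make) that $N_h$, $M_h$ depend on $h$ only. Your approach buys an elementary, quantitative and self-contained proof that never invokes the stochastic process $Y$; the paper's approach buys brevity here by leaning on machinery it needs anyway.
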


Given that the equations studied are in continuous time, it is natural to ask whether
the summation conditions can be replaced by integral conditions on $\sigma$ instead. The answer is in
the affirmative.
To this end we introduce for fixed $c>0$ the $\epsilon$--dependent
integral
\begin{equation} \label{def.Ieps}
I_c(\epsilon)=\int_0^\infty \varsigma_c(t)
\exp\left(-\frac{\epsilon^2/2}{\varsigma_c(t)^2
}\right)\chi_{(0,\infty)}\left( \varsigma_c(t)\right)\,dt,
\end{equation}
where we have defined
\begin{equation} \label{def.varsigc}
\varsigma_c(t):= \left(\int_{t}^{t+c}
\|\sigma(s)\|^2_F\,ds\right)^{1/2}, \quad t\geq 0.
\end{equation}
We notice that $\epsilon\mapsto I_c(\epsilon)$ is a monotone function,
and therefore $I_c(\cdot)$ is either finite for all $\epsilon>0$;
infinite for all $\epsilon>0$; or finite for all
$\epsilon>\epsilon'$ and infinite for all $\epsilon<\epsilon'$. The
following theorem is therefore seen to classify the asymptotic
behaviour of \eqref{def.Y}.
\begin{theorem} \label{theorem.Icondn}
Suppose that $\sigma$ obeys \eqref{eq.sigmacns} and that $Y$ is the
unique continuous adapted process which obeys \eqref{def.Y}. Let $c>0$,
$I_c(\cdot)$ be defined by \eqref{def.Ieps}, and $\varsigma_c$ by \eqref{def.varsigc}.
\begin{itemize}
\item[(A)] If
\begin{equation} \label{eq.Istable}
\text{$I_c(\epsilon)$ is finite for all $\epsilon>0$},
\end{equation}
then $\lim_{t\to\infty} Y(t)=0$ a.s.
\item[(B)] If there exists $\epsilon'>0$ such that
\begin{equation}  \label{eq.Ibounded}
\text{$I_c(\epsilon)$ is finite for all $\epsilon>\epsilon'$}, \quad
\text{$I_c(\epsilon)=+\infty$ for all $\epsilon<\epsilon'$},
\end{equation}
then there exist deterministic $0<c_1\leq c_2<+\infty$ such that
\[
c_1\leq \limsup_{t\to\infty} \|Y(t)\|\leq c_2, \quad\text{a.s.}
\]
Moreover, $Y$ also obeys  \eqref{eq.liminfYaveY0}.
\item[(C)] If
\begin{equation} \label{eq.Iunstable}
\text{$I_c(\epsilon)=+\infty$ for all $\epsilon>0$},
\end{equation}
then $\limsup_{t\to\infty} \|Y(t)\|=+\infty$ a.s.
\end{itemize}
\end{theorem}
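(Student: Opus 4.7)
The plan is to deduce Theorem~\ref{theorem.Icondn} from Theorem~\ref{theorem.Yclassify} by establishing a two--sided comparison between the integral $I_c(\epsilon)$ and the sum $S_c'(\epsilon)$, up to a multiplicative rescaling of $\epsilon$. The three alternatives of Theorem~\ref{theorem.Yclassify} depend only on whether the threshold of finiteness of $S_h'$ is $0$, a finite positive number, or $+\infty$, and this trichotomy is invariant under $\epsilon \mapsto \beta\epsilon$ for any $\beta > 0$. Therefore any estimates of the form $I_c(\epsilon) \leq C_1 S_c'(\beta_1\epsilon)$ and $S_c'(\beta_2\epsilon) \leq C_2 I_c(\epsilon)$ suffice to transfer the classification. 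By Proposition~\ref{prop.ShS1} there is no loss of generality in taking $h = c$.

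I would first record two elementary facts about the kernel $g_\epsilon(v) := v\exp(-\epsilon^2/(2v^2))$: (i) $g_\epsilon$ is strictly increasing on $(0,\infty)$, since $g_\epsilon'(v) = e^{-\epsilon^2/(2v^2)}(1 + \epsilon^2/v^2) > 0$, and (ii) it satisfies the scaling $g_\epsilon(\lambda v) = \lambda g_{\epsilon/\lambda}(v)$ for $\lambda > 0$. Writing $J_n := \int_{nc}^{(n+1)c} \|\sigma(s)\|_F^2\,ds = \varsigma_c(nc)^2$, the inclusion $[t,t+c] \subseteq [nc,(n+2)c]$ for $t \in [nc,(n+1)c]$ gives $\varsigma_c(t)^2 \leq J_n + J_{n+1}$. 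Combined with monotonicity of $g_\epsilon$ and the bound $\sqrt{a+b} \leq \sqrt{2a}\vee\sqrt{2b}$, integrating and summing yields
\begin{equation*}
I_c(\epsilon) \leq 2\sqrt{2}\,c\,S_c'(\epsilon/\sqrt{2}),
\end{equation*}
which handles the implication $S_c' \Rightarrow I_c$ in each of the three cases.

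The reverse implication is the main obstacle. The idea is to combine the Fubini identity $\int_{(n-1)c}^{(n+1)c} \varsigma_c(t)^2\,dt \geq c J_n$ (easily verified by computing the weight of $\|\sigma(s)\|_F^2$ for $s\in[nc,(n+1)c]$ in the double integral) with the uniform upper bound $\varsigma_c(t)^2 \leq J_{n-1}+J_n+J_{n+1}$ on $[(n-1)c,(n+1)c]$. A Markov--type estimate then shows that the set $A_n := \{t\in[(n-1)c,(n+1)c]: \varsigma_c(t)^2 \geq J_n/4\}$ has Lebesgue measure at least $c J_n/(2(J_{n-1}+J_n+J_{n+1}))$, from which the scaling identity $g_\epsilon(\sqrt{J_n}/2) = \tfrac{1}{2} g_{2\epsilon}(\sqrt{J_n})$ produces
\begin{equation*}
\int_{A_n} g_\epsilon(\varsigma_c(t))\,dt \geq \frac{c J_n}{4(J_{n-1}+J_n+J_{n+1})}\, g_{2\epsilon}(\sqrt{J_n}).
\end{equation*}
The awkward denominator is disposed of by splitting into the regime where $J_n$ is at least a fixed fraction of $J_{n-1}+J_{n+1}$ (in which case the prefactor is bounded below by a positive constant and one obtains $c\, g_{2\epsilon}(\sqrt{J_n}) \leq C\int_{A_n} g_\epsilon(\varsigma_c(t))\,dt$ directly) and the regime where $J_n$ is dominated by $J_{n\pm 1}$ (in which case monotonicity of $g_{2\epsilon}$ gives $g_{2\epsilon}(\sqrt{J_n}) \leq g_{2\epsilon}(\sqrt{J_{n-1}}) + g_{2\epsilon}(\sqrt{J_{n+1}})$, so the contribution is reabsorbed by an index shift). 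Summing produces $S_c'(2\epsilon) \leq C\,I_c(\epsilon)$, and the three cases of Theorem~\ref{theorem.Icondn} follow by feeding the resulting two--sided comparison through Theorem~\ref{theorem.Yclassify}.
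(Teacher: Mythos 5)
Your overall strategy is legitimate: since the trichotomy in Theorem~\ref{theorem.Yclassify} is invariant under $\epsilon\mapsto\beta\epsilon$, a two--sided comparison of $I_c$ with $S_c'$ up to rescaling of $\epsilon$, fed through Proposition~\ref{prop.ShS1} and Theorem~\ref{theorem.Yclassify}, would prove the theorem (note the paper itself gives no proof of Theorem~\ref{theorem.Icondn}: it is imported from \cite{JAJCAR:2012}, so your reduction is a genuine self--contained route). Your forward bound $I_c(\epsilon)\leq 2\sqrt{2}\,c\,S_c'(\epsilon/\sqrt{2})$ (up to the harmless $n=0$ term) is correct, as are the Fubini identity, the Markov--type estimate for $A_n$, and the scaling identity. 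The gap is in the regime--2 ``reabsorption by an index shift''. Writing $J_n=\int_{nc}^{(n+1)c}\|\sigma(s)\|_F^2\,ds$, when $J_n$ is dominated by $J_{n-1}+J_{n+1}$ you bound $g_{2\epsilon}(\sqrt{J_n})\leq g_{2\epsilon}(\sqrt{J_{n-1}})+g_{2\epsilon}(\sqrt{J_{n+1}})$; but the neighbouring indices need not lie in regime 1, so after summing over regime--2 indices the right--hand side consists of terms of $S_c'(2\epsilon)$ itself and the argument is circular: it only yields $\sum_{n\in N_2}g_{2\epsilon}(\sqrt{J_n})\leq 2S_c'(2\epsilon)$. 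Concretely, if $J_n$ grows (or decays) geometrically with a large ratio, or along long monotone ramps, every index is regime 2 and the step produces no bound at all; the desired inequality $S_c'(2\epsilon)\leq C I_c(\epsilon)$ is not established by what you wrote.

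The gap is repairable, but not by a one--step shift: one must iterate along the chain of dominating neighbours, using that $J$ grows by a fixed factor $1/(2\kappa)>1$ at each step so that, via the scaling identity, the contributions telescope with geometrically decaying weights into the nearest regime--1 index, and one must treat separately the case of an infinite chain (where $J_n\to\infty$ along a subsequence forces $I_c(\epsilon)=+\infty$, so there is nothing to prove). Alternatively, the splitting can be avoided entirely: for $t\in[(n-1)c,nc]$ one has $[nc,(n+1)c]\subseteq[t,t+2c]$, so $\varsigma_{2c}(t)^2\geq J_n$ and hence $c\,S_c'(\epsilon)\leq I_{2c}(\epsilon)$; since $\varsigma_{2c}(t)^2=\varsigma_c(t)^2+\varsigma_c(t+c)^2$, the same $\sqrt{2}$--trick you used in the forward direction gives $I_{2c}(\epsilon)\leq 2\sqrt{2}\,I_c(\epsilon/\sqrt{2})$, whence $S_c'(\sqrt{2}\,\epsilon)\leq (2\sqrt{2}/c)\,I_c(\epsilon)$, which is all you need. (A further minor point: in regime 1 the sets $A_n\subseteq[(n-1)c,(n+1)c]$ overlap, so summing costs an extra factor $2$ against $I_c(\epsilon)$; this is easily absorbed into the constant.)
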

A consequence of this result and of Theorem~\ref{theorem.Yclassify}
is that $S_h'(\epsilon)<+\infty$ for all $\epsilon>0$ if and only if $I_c(\epsilon)<+\infty$
for all $\epsilon>0$; that  $S_h'(\epsilon)=+\infty$ for all $\epsilon>0$ if and only if $I_c(\epsilon)=+\infty$
for all $\epsilon>0$; and that there exists $\epsilon'>0$ such that  $S_h'(\epsilon)<+\infty$ for all $\epsilon>\epsilon'$
and  $S_h'(\epsilon)=+\infty$ for all $\epsilon<\epsilon'$ if and only if there exists $\epsilon^\ast>0$ such that
$I_c(\epsilon)<+\infty$ for all $\epsilon>\epsilon^\ast$ and $I_c(\epsilon)=+\infty$ for all $\epsilon<\epsilon^\ast$.
Therefore, in all the results in the next section, we may replace, if we prefer, any condition relating to $S_h'$ with a condition involving 
the integral $I_c$. By norm equivalence, it is also the case that the Frobenius norm of $\sigma$ can be replaced by any other norm on 
$\mathbb{R}^{d\times r}$, and that the finiteness properties of $I_c$ and $S_h'$ are preserved for any other norm.

\subsection{Statement and discussion of main results} 
We now turn our attention to the nonlinear equation
\eqref{eq.mainstocheqn}. We start by showing that solutions will
become arbitrarily large whenever the diffusion coefficient is such
that solutions of the corresponding affine equation \eqref{def.Y}
have the same property. Furthermore, if solutions are of
\eqref{def.Y} are bounded but not convergent to zero, then solutions
of \eqref{eq.mainstocheqn} do not converge to zero.
\begin{theorem}\label{theorem.Xlowerboundunbound}
Suppose that $f$ is continuous. Suppose that $\sigma$
obeys \eqref{eq.sigmacns} and let $S_h'$ be defined by \eqref{def.Shpr}.
Suppose that $X$ is a continuous adapted process which obeys \eqref{eq.mainstocheqn}.
\begin{itemize}
\item[(A)] Suppose that $S_h'$ obeys \eqref{eq.thetaunstableh}. Then
\[
\limsup_{t\to\infty} \|X(t)\|=+\infty, \quad \text{a.s.}
\]
\item[(B)] Suppose that $S_h'$ obeys \eqref{eq.thetaboundedh}. Then
there is a deterministic $c_3>0$ such that
\[
\limsup_{t\to\infty} \|X(t)\|\geq c_3, \quad\text{a.s.}
\]
\end{itemize}
\end{theorem}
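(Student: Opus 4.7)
The plan is to compare $X$ with the auxiliary affine process $Y$ of \eqref{def.Y}. Setting $U(t):=X(t)-Y(t)$, the diffusion terms in \eqref{eq.mainstocheqn} and \eqref{def.Y} cancel, so $U$ is absolutely continuous and satisfies the pathwise equation
\[
U'(t)=Y(t)-f(X(t)),\qquad U(0)=\xi.
\]
I would then fix a sample path on which $\|X(t)\|\leq M$ for all $t\geq T$ (with $M,T$ allowed to depend on $\omega$) and study $V(t):=\tfrac{1}{2}\|U(t)\|^2$. Substituting $U=X-Y$ yields
\[
V'(t)=-\langle X(t),f(X(t))\rangle+\langle Y(t),f(X(t))\rangle+\langle X(t),Y(t)\rangle-\|Y(t)\|^2,
\]
and the first term is discarded by the dissipative condition \eqref{eq.fglobalunperturbed}. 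Cauchy--Schwarz and Young's inequality, together with $F_M:=\sup_{\|x\|\leq M}\|f(x)\|<\infty$ (finite by continuity of $f$), then give
\[
V'(t)\leq \tfrac{1}{2}(F_M+M)^2-\tfrac{1}{2}\|Y(t)\|^2,\qquad t\geq T.
\]

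The decisive subsequent step is to convert $-\|Y\|^2$ into dissipative feedback on $V$. The reverse triangle inequality $\|Y\|\geq \|U\|-\|X\|\geq \|U\|-M$ gives $\|Y(t)\|^2\geq V(t)/2$ whenever $V(t)\geq 2M^2$, so
\[
V'(t)\leq -\tfrac{1}{4}V(t)+\tfrac{1}{2}(F_M+M)^2\quad\text{on }\{V(t)\geq 2M^2\}.
\]
A standard comparison argument then shows $\limsup_{t\to\infty}V(t)\leq V_\ast(M):=2(F_M+M)^2$, since $V'\leq 0$ at $V=V_\ast(M)$ and $V'\leq -(V-V_\ast(M))/4$ above. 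Hence
\[
\limsup_{t\to\infty}\|Y(t)\|\leq M+\sqrt{2V_\ast(M)}=3M+2F_M.
\]

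Part (A) now follows immediately: on any sample path with $\limsup_{t\to\infty}\|X(t)\|<\infty$ such $M$ and $T$ exist, forcing $\limsup_{t\to\infty}\|Y(t)\|<\infty$; but Theorem~\ref{theorem.Yclassify}(C) asserts $\limsup_{t\to\infty}\|Y(t)\|=+\infty$ almost surely under \eqref{eq.thetaunstableh}, so the set of such paths is null. For part (B) I would choose the \emph{deterministic} constant $c_3>0$ small enough that $3c_3+2F_{c_3}<c_1$, which is possible because $f(0)=0$ and the continuity of $f$ give $F_{c_3}\to 0$ as $c_3\to 0$; on the event $\{\limsup_{t\to\infty}\|X(t)\|<c_3\}$ one may take $M=c_3$ in the estimate above to conclude $\limsup_{t\to\infty}\|Y(t)\|<c_1$, contradicting Theorem~\ref{theorem.Yclassify}(B). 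The main technical hurdle is getting the dissipative estimate into the form $V'\leq -V/4+\text{const}$ with a steady state $V_\ast(M)$ that vanishes as $M\to 0$; this quantitative behaviour is exactly what is needed to produce the deterministic $c_3$ in part (B), and would fail if one merely knew a qualitative bound on $V$.
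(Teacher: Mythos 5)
Your argument is correct in substance and has the same overall structure as the paper's proof: show pathwise that if $\limsup_{t\to\infty}\|X(t)\|\leq M$ then the affine process $Y$ of \eqref{def.Y} satisfies $\limsup_{t\to\infty}\|Y(t)\|\leq G(M)$ for a deterministic continuous modulus $G$ with $G(M)\to 0$ as $M\to 0$, and then contradict Theorem~\ref{theorem.Yclassify} (part (C) for (A), part (B) with a $c_3$ chosen so that $G(c_3)<c_1$ for (B)). The execution of that key step is genuinely different, however. The paper adds and subtracts $X$ in the drift and uses the exponential--kernel representation $Y(t)=X(t)-\xi e^{-t}-\int_0^t e^{-(t-s)}\bigl(X(s)-f(X(s))\bigr)\,ds$, which gives $\limsup_{t\to\infty}\|Y(t)\|\leq 2X^\ast+\bar f(X^\ast)=:\bar F(X^\ast)$ (with $\bar f(x)=\sup_{\|y\|\leq x}\|f(y)\|$) in one line and defines $c_3=\bar F^{-1}(c_1)$; you instead run a Lyapunov/differential--inequality argument on $V=\tfrac12\|X-Y\|^2$, arriving at the analogous bound $3M+2F_M$. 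Your comparison step is sound (note that the feedback inequality is indeed available whenever $V\geq V_\ast(M)$, since $V_\ast(M)=2(F_M+M)^2\geq 2M^2$), and the event bookkeeping you omit is routine. One point needs fixing: the theorem assumes only that $f$ is continuous, yet you invoke the dissipative condition \eqref{eq.fglobalunperturbed} to discard $-\langle X(t),f(X(t))\rangle$. This extra hypothesis is inessential to your method: on the path in question $|\langle X(t),f(X(t))\rangle|\leq M F_M$ for $t\geq T$, so you can retain the term at the cost of a larger steady state, and the resulting modulus still vanishes as $M\to 0$; as written, though, your proof establishes less than the stated theorem. (Your use of $f(0)=0$ in part (B), to get $F_{c_3}\to 0$, matches the paper's own implicit use of $\bar F(0)=0$, so that is consistent with the intended framework.) On balance the paper's representation--based estimate is shorter and works verbatim under bare continuity, while your energy argument is a perfectly viable alternative once the dissipativity crutch is removed.
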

 We note in this result, as well as in the rest of the results in this paper, that we do not require $f$ to obey the local Lipschitz condition.
The price to be paid for this is that the solution of the equation need not be unique. If uniqueness is desired, the local Lipschitz condition, or one--sided global Lipschitz condition can be imposed. However, it is interesting to note that should solutions exist,
they must all share the same asymptotic behaviour.

We show that its solutions can either tend to zero or their modulus
tends to infinity if and only if solutions of a linear equation with
the same diffusion tend to zero.
\begin{theorem} \label{th.xto0yto0}
Suppose that $f$ satisfies \eqref{eq.fglobalunperturbed}.
Suppose that $\sigma$ obeys \eqref{eq.sigmacns}.
Suppose that $X$ is a continuous adapted process which obeys \eqref{eq.mainstocheqn}.
Let $Y$ be the unique continuous adapted process which obeys of \eqref{def.Y}.
Then there exist a.s. events $\Omega_1$
and $\Omega_2$ such that
\begin{gather}\label{eq.Y0impX0}
\{\omega:\lim_{t\to\infty} X(t,\omega)=0\} \subseteq
\{\omega: \lim_{t\to\infty}Y(t,\omega)=0\}\cap \Omega_1,\\
\label{eq.X0impY0} \{\omega:\lim_{t\to\infty} Y(t,\omega)=0\}
\subseteq \{\omega: \lim_{t\to\infty} X(t,\omega)=0\}\cup
\{\omega:\lim_{t\to\infty} \|X(t,\omega)\|=\infty\} \cap \Omega_2.
\end{gather}
\end{theorem}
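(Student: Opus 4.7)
The key idea is to exploit that $X$ and $Y$ share the diffusion coefficient $\sigma(t)\,dB(t)$: set $U(t) := X(t) - Y(t)$. The stochastic integrals cancel, so $U$ is pathwise absolutely continuous and satisfies the random ODE
\[
U'(t) = -f(X(t)) + Y(t), \qquad U(0) = \xi.
\]
All subsequent arguments are pathwise on the a.s. event $\Omega_1 = \Omega_2$ on which $X$ and $Y$ are continuous on $[0,\infty)$ and the ODE holds.

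\emph{Proof of \eqref{eq.Y0impX0}.} Rewrite the ODE as $U'(t) + U(t) = X(t) - f(X(t))$. On $\{X \to 0\}$, continuity of $f$ with $f(0) = 0$ ensures $h(t) := X(t) - f(X(t)) \to 0$. Variation of parameters gives $U(t) = e^{-t}\xi + \int_0^t e^{-(t-s)}h(s)\,ds$, and a standard split at a large time $T$ shows $U(t)\to 0$; hence $Y(t) = X(t) - U(t) \to 0$.

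\emph{Proof of \eqref{eq.X0impY0}.} On $\{Y\to 0\}$ one has $X\to 0 \Leftrightarrow U\to 0$ and $\|X\|\to\infty\Leftrightarrow \|U\|\to\infty$, so it suffices to show pathwise that $U\to 0$ or $\|U\|\to\infty$. Substituting $U = X - Y$ into $\frac{d}{dt}\|U\|^2 = 2\langle U,-f(X)+Y\rangle$ gives the key identity
\[
\frac{d}{dt}\|U(t)\|^2 = -2\langle X(t),f(X(t))\rangle + 2\langle Y(t),f(X(t))\rangle + 2\langle U(t),Y(t)\rangle,
\]
whose leading term is non-positive by \eqref{eq.fglobalunperturbed}. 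Suppose for contradiction that $0<\limsup\|U\|$ and $\liminf\|U\|<\infty$. Choose $0<a<b<\infty$ so that $\|U\|$ performs infinitely many bracketing excursions from $a$ up to $b$ within $[a,b]$: if $\limsup\|U\|=\eta\in(0,\infty)$, pick $a\in(0,\eta)$, $b\in(a,\eta)$ (the sub-case $\liminf\|U\|\geq a$ is dispatched at once, since the drift bound below would then force $\|U\|^2\to-\infty$); if $\limsup\|U\|=\infty$, pick $a>\liminf\|U\|$ and any $b>a$. Set $R := b+1$. By continuity of $f$ and compactness, $\mu := \inf_{a/2\leq\|x\|\leq R}\langle x,f(x)\rangle>0$ and $M := \sup_{\|x\|\leq R}\|f(x)\|<\infty$. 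Using $Y\to 0$, fix $T$ so large that $\|Y(t)\|<a/2$ and $2\|Y(t)\|(M+R)\leq\mu$ for all $t\geq T$. Then whenever $t\geq T$ and $\|U(t)\|\in[a,b]$, we have $\|X(t)\|\in[a/2,R]$, and the identity yields $\frac{d}{dt}\|U(t)\|^2 \leq -\mu$. Taking $\tau := \inf\{t>T:\|U(t)\|\geq b\}$ and $\sigma := \sup\{s<\tau:\|U(s)\|\leq a\}$, continuity delivers a bracketing excursion $[\sigma,\tau]$, and integration gives $b^2 - a^2 \leq -\mu(\tau-\sigma)\leq 0$, a contradiction.

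\emph{Main obstacle.} Part (A) is routine once the pathwise reduction $U = X-Y$ is in place. The essential work is in part (B), where the vanishing of $Y$ must be combined with the compactness-based uniform lower bound from dissipativity to secure a strict pathwise drift inequality for $\|U\|^2$ on annular level sets; the bracketing excursion then has to be constructed separately in the bounded and unbounded regimes of $\|U\|$. Once this deterministic contradiction is in hand, both inclusions follow without further stochastic input.
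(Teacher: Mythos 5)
Your proposal is correct and follows essentially the same route as the paper: you use the identical decomposition $U=X-Y$ (the paper's $z$), the resulting pathwise random ODE $U'=-f(X)+Y$, the variation-of-parameters identity for \eqref{eq.Y0impX0}, and for \eqref{eq.X0impY0} the same key mechanism of a uniformly negative drift for $\|U\|^2$ on compact annuli (dissipativity plus continuity of $f$) once $\|Y\|$ is small. The only difference is bookkeeping — you integrate the drift over a bracketing excursion from level $a$ to level $b$, whereas the paper checks the sign of $\tfrac{d}{dt}\|z\|^2$ at crossing times in its Steps A--D — and apart from trivially repairable constant choices (e.g.\ $R=b+1$ versus $b+a/2$, and ensuring the excursion starts after the time $T$) your argument matches the paper's proof.
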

When taken in conjunction with Theorem~\ref{theorem.Yclassify}, we
see that the condition \eqref{eq.thetastableh} comes close to
characterising the convergence of solutions of
\eqref{eq.mainstocheqn} to zero, contingent on the possibility that
$\|X(t)\|\to\infty$ as $t\to\infty$ being eliminated.
\begin{theorem} \label{theorem.Xdatanof}
Suppose that $f$ satisfies \eqref{eq.fglobalunperturbed}.
Suppose that $\sigma$ obeys \eqref{eq.sigmacns}.
Let $X$ be a continuous adapted process which obeys \eqref{eq.mainstocheqn}.
\begin{itemize}
\item[(i)] If $\sigma$ obeys \eqref{eq.thetastableh}, then for each $\xi\in\mathbb{R}^d$,
\[
\{\lim_{t\to\infty} \|X(t,\xi)\|=\infty\}\cup \{\lim_{t\to\infty}
\|X(t,\xi)\|=0\} \quad\text{is an a.s. event}.
\]
\item[(ii)] If $X(t,\xi)\to 0$ with positive probability for some $\xi\in \mathbb{R}^d$, then $\sigma$ obeys \eqref{eq.thetastableh}.
\end{itemize}
\end{theorem}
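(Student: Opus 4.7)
The strategy is to reduce both parts to the already--established dichotomies for the auxiliary affine process $Y$ of \eqref{def.Y}, by invoking Theorem~\ref{th.xto0yto0}, which couples the asymptotic behaviour of $X$ to that of $Y$, together with Theorem~\ref{theorem.Yclassify}, which classifies the asymptotic behaviour of $Y$ in terms of the finiteness properties of $S_h'$. No further analytic input on $f$ beyond the dissipative condition \eqref{eq.fglobalunperturbed} will be needed, since the theorem does not attempt to rule out the escape scenario $\|X(t)\|\to\infty$.

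For part (i), assume \eqref{eq.thetastableh}. By Theorem~\ref{theorem.Yclassify}(A), the event $\Omega_Y:=\{\omega : Y(t,\omega)\to 0 \text{ as } t\to\infty\}$ is almost sure. The inclusion \eqref{eq.X0impY0} from Theorem~\ref{th.xto0yto0} then gives
\[
\Omega_Y \;\subseteq\; \bigl(\{\omega : X(t,\omega)\to 0\}\cup\{\omega : \|X(t,\omega)\|\to\infty\}\bigr)\cap \Omega_2,
\]
where $\Omega_2$ is the a.s. event supplied by Theorem~\ref{th.xto0yto0}. Intersecting with $\Omega_Y\cap\Omega_2$, which is itself almost sure, we conclude that the right--hand side has full measure, proving the claim.

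For part (ii), suppose $\mathbb{P}[X(t,\xi)\to 0]>0$ for some $\xi\in\mathbb{R}^d$. The inclusion \eqref{eq.Y0impX0} reads
\[
\{\omega : X(t,\omega)\to 0\}\;\subseteq\;\{\omega : Y(t,\omega)\to 0\}\cap\Omega_1,
\]
with $\Omega_1$ almost sure, so $\mathbb{P}[Y(t)\to 0]\geq \mathbb{P}[X(t,\xi)\to 0]>0$. Now invoke Theorem~\ref{theorem.Yclassify}: in case (B) the lower bound $\limsup_{t\to\infty}\|Y(t)\|\geq c_1>0$ holds a.s., and in case (C) we have $\limsup_{t\to\infty}\|Y(t)\|=+\infty$ a.s. Both alternatives preclude $\{Y(t)\to 0\}$ having positive probability, hence \eqref{eq.thetastableh} must hold.

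There is no real technical obstacle here; the argument is simply a bookkeeping exercise combining the two structural results already cited. The only point requiring care is to keep track of the almost sure ambient sets $\Omega_1$ and $\Omega_2$ from Theorem~\ref{th.xto0yto0}, so that the trichotomy for $Y$ transfers cleanly to the trichotomy $\{X\to 0\}$, $\{\|X\|\to\infty\}$, \emph{neither} for the nonlinear process $X$.
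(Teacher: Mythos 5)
Your proposal is correct and follows essentially the same route as the paper: part (i) combines Theorem~\ref{theorem.Yclassify}(A) with the inclusion \eqref{eq.X0impY0} of Theorem~\ref{th.xto0yto0}, and part (ii) uses \eqref{eq.Y0impX0} to transfer positive probability of convergence to $Y$ and then rules out cases (B) and (C) of Theorem~\ref{theorem.Yclassify} via the trichotomy for $S_h'$. The only difference is that you spell out the bookkeeping with the a.s.\ events $\Omega_1$, $\Omega_2$, which the paper leaves implicit.
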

\begin{proof}
To prove part (i), we first note that \eqref{eq.thetastableh} and
Theorem~\ref{theorem.Yclassify} implies that $Y(t)\to 0$ as
$t\to\infty$ a.s. Theorem~\ref{th.xto0yto0} then implies that the
event $\{\lim_{t\to\infty} \|X(t,\xi)\|=\infty\}\cup
\{\lim_{t\to\infty} X(t,\xi)=0\}$ is a.s. To show part (ii), by
hypothesis and Theorem~\ref{th.xto0yto0}, we see that
$\mathbb{P}[Y(t)\to 0 \text{ as $t\to\infty$}]>0$. Therefore, by
Theorem~\ref{theorem.Yclassify}, it follows that $\sigma$ obeys
\eqref{eq.thetastableh}.
\end{proof}

Part (i) of Theorem~\ref{theorem.Xdatanof} is unsatisfactory, as it
does not rule out the possibility that $\|X(t)\|\to \infty$ as
$t\to\infty$ with positive probability. If further restrictions are
imposed on $f$ and $\sigma$, however, it is possible to conclude
that $X(t,\xi)\to 0$ as $t\to\infty$ a.s. In the scalar case, it was
shown in Appleby, Cheng and Rodkina~\cite{JAAR:2010a} that no such
additional conditions are required.

Our first result in this direction imposes an extra condition on
$\sigma$, but not on $f$. We note that when $\sigma\in
L^2([0,\infty);\mathbb{R}^{d\times r})$, $Y$ obeys \eqref{eq.Ytto0}
and that $X$ obeys \eqref{eq.stochglobalstable}.
To prove the result, we apply a semimartingale convergence theorem of
Lipster--Shiryaev (see e.g., \cite[Theorem 7,
p.139]{LipShir:1989} or~\cite[Theorem 3.9]{Mao1}) to the non--negative semimartingale $\|X\|^2$.
We state the desired semimartingale convergence result for the reader's ease of reference.
\begin{lemma}\label{theorem.convergence}
Let $\left\{A(t)\right\}_{t\geq 0}$ and $\left\{U(t)\right\}_{t\geq 0}$ be two continuous adapted increasing process with
$A(0)=U(0)=0$ a.s. Let $\left\{M(t)\right\}_{t\geq 0}$ be a real--valued continuous local martingale with $M(0)=0$ a.s.
Let $\xi$ be a nonnegative $\mathcal{F}_0$--measurable random variable. Define
\[
Z(t)=\xi+A(t)-U(t)+M(t)\quad for\quad t\geq 0.
\]
If $Z(t)$ is nonnegative, then
\[
\left\{\lim_{t\to \infty}A(t)<\infty\right\}\subset\left\{\lim_{t\to \infty}Z(t) \text{exists and is finite}\right\}
\cap\left\{\lim_{t\to \infty}U(t)<\infty\right\} a.s.
\]
where $B\subset D$ a.s. means $\mathbb{P}(B\cap D^c)=0$. In particular, if $\lim_{t\to\infty}A(t)<\infty$ a.s., then for almost all $\omega \in \Omega$
\[
\lim_{t\to\infty}Z(t,\omega) \text{exists and is finite, and} \lim_{t\to\infty}U(t,\omega)<\infty.
\]
\end{lemma}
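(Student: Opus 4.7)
The plan is to prove this standard Lipster--Shiryaev theorem by a multi--layer localization that simultaneously handles three obstructions: $M$ is only a local martingale, $\xi$ is not assumed integrable, and $U$ is not a priori bounded. The overall strategy is to first show that $U(\infty)<\infty$ a.s.\ on $\{A(\infty)<\infty\}$, then deduce that $M$ converges on that event by exploiting the non-negativity of $Z$ together with the boundedness of $A$ and $U$, and finally read off the finite limit of $Z$ from the defining identity $Z=\xi+A-U+M$.

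First I would introduce the stopping times $T_k=\inf\{t\geq 0:|M(t)|\geq k\}$, making $M^{T_k}$ a bounded continuous martingale, and $S_n=\inf\{t\geq 0:A(t)>n\}$, so that $A(t\wedge S_n)\leq n$ and $\{A(\infty)<\infty\}=\bigcup_n\{S_n=\infty\}$ up to a null set. To cope with the possible non-integrability of $\xi$, I would also use the $\mathcal{F}_0$-events $B_j=\{\xi\leq j\}$, which exhaust $\Omega$ as $j\to\infty$ and can be multiplied into martingales without destroying the martingale property. Rearranging the defining identity as $U(t)=\xi+A(t)+M(t)-Z(t)$, stopping at $\rho_{n,k}:=S_n\wedge T_k$, multiplying by $\mathbf{1}_{B_j}$, and taking expectations gives
\[
\mathbb{E}[\mathbf{1}_{B_j}U(t\wedge\rho_{n,k})]\leq \mathbb{E}[\mathbf{1}_{B_j}\xi]+\mathbb{E}[\mathbf{1}_{B_j}A(t\wedge\rho_{n,k})]\leq j+n,
\]
where I have used $Z\geq 0$ and the zero mean of $\mathbf{1}_{B_j}M^{\rho_{n,k}}$. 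Sending $t\to\infty$, then $k\to\infty$ (using continuity of $M$ to give $T_k\to\infty$), then $j\to\infty$, and taking a union over $n$ produces $U(\infty)<\infty$ a.s.\ on $\{A(\infty)<\infty\}$.

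For the convergence of $M$, I would introduce a fourth localizer $R_m=\inf\{t:U(t)>m\}$. On $B_j\cap\{S_n=\infty\}$ and for $t\leq R_m\wedge T_k$, the identity gives $M(t)\geq -(j+n)$, so $\mathbf{1}_{B_j}M^{T_k\wedge S_n\wedge R_m}+(j+n)\mathbf{1}_{B_j}$ is a non-negative supermartingale, and hence converges a.s.\ by the classical supermartingale convergence theorem. Letting $k,m,n,j\to\infty$, the combined stopping time tends to infinity on $\{A(\infty)<\infty\}$---the crucial use of the previous step being that $R_m\to\infty$ there---so $\lim_{t\to\infty}M(t)$ exists and is finite a.s.\ on this event, and rearranging $Z=\xi+A-U+M$ delivers the same conclusion for $Z$.

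The principal obstacle will be orchestrating these four interdependent localizations in the right order: $R_m$ is only useful once the finiteness of $U$ has already been secured, and the $\mathcal{F}_0$-truncation $B_j$ must be threaded consistently through every expectation bound so that the estimates remain meaningful without assuming $\xi\in L^1$. Once this bookkeeping is in place, the remaining pieces are routine applications of monotone convergence and the non-negative supermartingale convergence theorem.
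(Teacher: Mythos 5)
A preliminary remark: the paper does not prove this lemma at all --- it is quoted from Liptser and Shiryaev (Theorem 7, p.~139) and Mao (Theorem 3.9) --- so your argument can only be measured against the standard proof. Your first step is correct and complete: stopping at $S_n\wedge T_k$, truncating $\xi$ on the $\mathcal{F}_0$--sets $B_j$, using $Z\ge 0$ and the vanishing mean of the stopped martingale, and then passing to the limit by monotone convergence in $t$, $k$ and $j$ does yield $U(\infty)<\infty$ a.s.\ on $\{A(\infty)<\infty\}$.

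The gap is in the second step, in how you dispose of the localizer $T_k$. For each fixed $k$ the process $\mathbf{1}_{B_j}\bigl(M^{T_k\wedge S_n\wedge R_m}+(j+n)\bigr)$ is indeed a non--negative (in fact bounded) martingale, so its convergence as $t\to\infty$ is automatic and carries no information; all the work lies in removing $T_k$. The inference ``the combined stopping time tends to infinity on $\{A(\infty)<\infty\}$, hence $\lim_{t\to\infty}M(t)$ exists there'' is not valid for the index $k$: convergence of each stopped process together with $T_k\uparrow\infty$ does not imply convergence of the unstopped process (Brownian motion is a counterexample: each $B^{T_k}$ converges a.s., yet $B$ oscillates unboundedly). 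The transfer works for $S_n$, $R_m$ and $B_j$ because on $\{A(\infty)<\infty\}$ these localizers are \emph{eventually equal to} $+\infty$ for some finite index (for $R_m$ this is exactly your Step 1), whereas $\bigcup_k\{T_k=\infty\}=\{\sup_{t\ge 0}|M(t)|<\infty\}$, and that this event contains $\{A(\infty)<\infty\}$ is part of what is being proved. The standard repair is to let $k\to\infty$ \emph{before} invoking convergence: by conditional Fatou along the localizing sequence $T_k$, the non--negative continuous local martingale $N(t)=\mathbf{1}_{B_j}\bigl(M(t\wedge S_n\wedge R_m)+(j+n)\bigr)$ is a supermartingale, and the non--negative supermartingale convergence theorem applied to $N$ itself gives a.s.\ convergence of $M(\cdot\wedge S_n\wedge R_m)$; one then restricts to $B_j\cap\{S_n=\infty\}\cap\{R_m=\infty\}$ and takes unions. (Incidentally, $R_m$ is not needed for the lower bound $M\ge -(j+n)$, since $U$ enters the identity $M=Z-\xi-A+U$ with a positive sign; keeping it is harmless, but the genuinely delicate localizer is $T_k$, not $R_m$.) With this one correction the rest of your argument, including reading off the finite limit of $Z$ from $Z=\xi+A-U+M$, is fine.
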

Applying Lemma~\ref{theorem.convergence}, we can establish the following result.
\begin{theorem} \label{th.sigL2}
Suppose that $f$ satisfies \eqref{eq.fglobalunperturbed}.
Suppose that $\sigma$ obeys \eqref{eq.sigmacns}
and $\sigma\in L^2([0,\infty);\mathbb{R}^{d\times r})$. Suppose that $X$ is a continuous
adapted process which obeys \eqref{eq.mainstocheqn}, and let $Y$ be the unique continuous adapted process
that obeys \eqref{def.Y}. Then $X$ obeys \eqref{eq.stochglobalstable} and
$\lim_{t\to\infty} Y(t)=0$ a.s.
\end{theorem}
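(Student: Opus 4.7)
The plan is to apply the semimartingale convergence theorem (Lemma \ref{theorem.convergence}) to the nonnegative semimartingale $\|X(t)\|^2$. First, I would apply It\^o's formula to $V(x)=\|x\|^2$, which yields
\[
\|X(t)\|^2 = \|\xi\|^2 + \int_0^t \|\sigma(s)\|_F^2\,ds - 2\int_0^t \langle X(s),f(X(s))\rangle\,ds + 2\int_0^t \langle X(s),\sigma(s)\,dB(s)\rangle.
\]
Setting $Z(t)=\|X(t)\|^2$, $A(t)=\int_0^t \|\sigma(s)\|_F^2\,ds$, $U(t)=2\int_0^t \langle X(s),f(X(s))\rangle\,ds$, and letting $M(t)$ be the stochastic integral above (a continuous local martingale with $M(0)=0$), the process $U$ is nondecreasing by the dissipative condition \eqref{eq.fglobalunperturbed}, and $A$ is nondecreasing with $\lim_{t\to\infty}A(t)<+\infty$ because $\sigma\in L^2([0,\infty);\mathbb{R}^{d\times r})$. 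Lemma \ref{theorem.convergence} therefore delivers an a.s.\ event on which $\|X(t)\|^2$ converges to a finite limit $L\geq 0$ and on which $\int_0^\infty \langle X(s),f(X(s))\rangle\,ds <+\infty$.

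The crux is then to show $L=0$ a.s. Suppose, towards a contradiction, that on some event of positive probability one has $L>0$. On that event, there exists a (random) $T$ such that $\tfrac12\sqrt{L}\leq \|X(t)\|\leq 2\sqrt{L}$ for all $t\geq T$, so that $X(t)$ takes values in a compact annulus $K\subset \mathbb{R}^d\setminus\{0\}$ for $t\geq T$. Since $f$ is continuous and $\langle x,f(x)\rangle>0$ for every $x\neq 0$, the continuous function $x\mapsto \langle x,f(x)\rangle$ attains a strictly positive minimum $\delta>0$ on the compact set $K$. Consequently $\int_T^\infty \langle X(s),f(X(s))\rangle\,ds=+\infty$, contradicting the finiteness established above. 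Hence $L=0$ a.s., which is \eqref{eq.stochglobalstable}.

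To obtain $\lim_{t\to\infty}Y(t)=0$ a.s., I would run the same machine on $\|Y(t)\|^2$: It\^o's formula applied to \eqref{def.Y} gives
\[
d\|Y(t)\|^2=\bigl(-2\|Y(t)\|^2+\|\sigma(t)\|_F^2\bigr)\,dt+2\langle Y(t),\sigma(t)\,dB(t)\rangle,
\]
and Lemma \ref{theorem.convergence} applied with $U(t)=2\int_0^t \|Y(s)\|^2\,ds$ simultaneously forces $\|Y(t)\|^2$ to converge a.s.\ to a finite limit $L'\geq 0$ and $\int_0^\infty \|Y(s)\|^2\,ds$ to be finite, which together force $L'=0$. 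Alternatively, the elementary estimate $\sqrt{a}\exp(-\epsilon^2/(2a))=O(a^2)$ as $a\downarrow 0$, combined with the summability of $a_n:=\int_{nh}^{(n+1)h}\|\sigma(s)\|_F^2\,ds$, yields $S_h'(\epsilon)<+\infty$ for every $\epsilon>0$, and Theorem \ref{theorem.Yclassify}(A) then gives the conclusion.

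I expect the main obstacle to be the step ruling out $L>0$. That argument rests essentially on the strict positivity of $\langle x,f(x)\rangle$ for $x\neq 0$ together with continuity of $f$, which allows a positive lower bound on any compact annulus avoiding the origin; without the strictness in \eqref{eq.fglobalunperturbed}, $\|X(t)\|^2$ could settle at a nonzero equilibrium of $\langle x,f(x)\rangle$. Everything else is a mechanical application of It\^o's formula, the dissipative structure, and the semimartingale convergence theorem.
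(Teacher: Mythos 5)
Your proposal is correct and follows essentially the same route as the paper: Itô's formula applied to $\|X(t)\|^2$, the Liptser--Shiryaev convergence lemma with $A(t)=\int_0^t\|\sigma(s)\|_F^2\,ds$ finite by $\sigma\in L^2$, and the strict positivity of $\langle x,f(x)\rangle$ away from the origin to rule out a nonzero limit (the paper draws the contradiction via the time-average $\|X(t)\|^2/t\to -2\phi(\sqrt{L})<0$ rather than directly from the finiteness of $\int_0^\infty\langle X,f(X)\rangle\,ds$, but this is an inessential variation). Your treatment of $Y$ — either rerunning the same argument with $f(y)=y$ or noting $\sigma\in L^2$ forces $S_h'(\epsilon)<\infty$ and invoking Theorem~\ref{theorem.Yclassify}(A) — is likewise sound.
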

It can be seen from Theorem~\ref{th.sigL2} that it only remains to
prove Theorem~\ref{th.xto0yto0} in the case when $\sigma\not\in
L^2([0,\infty);\mathbb{R}^{d\times r})$. Under an additional
restriction on $f$ (but no extra condition on $\sigma$) we can give
necessary and sufficient conditions in terms of $\sigma$ for which
$X$ tends to zero a.s.
\begin{theorem} \label{theorem.Xiffsigma}
Suppose $f$ obeys \eqref{eq.fglobalunperturbed} and
\begin{equation} \label{eq.fasy}
\liminf_{r\to\infty} \inf_{\|x\|=r} \langle x,f(x)\rangle>0.
\end{equation}
Suppose that $\sigma$ obeys \eqref{eq.sigmacns}.
Suppose that $X$ is a continuous adapted process which obeys \eqref{eq.mainstocheqn}.
Then
the following are equivalent:
\begin{itemize}
\item[(A)] $S_h'$ obeys \eqref{eq.thetastableh};
\item[(B)] $\lim_{t\to\infty} X(t,\xi)=0$ with positive probability for some $\xi\in\mathbb{R}^d$.
\item[(C)] $\lim_{t\to\infty} X(t,\xi)=0$ a.s. for each $\xi\in\mathbb{R}^d$.
\end{itemize}
\end{theorem}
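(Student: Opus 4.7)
The three implications to establish are (C) $\Rightarrow$ (B), (B) $\Rightarrow$ (A), and (A) $\Rightarrow$ (C). The first is trivial, and (B) $\Rightarrow$ (A) is exactly the content of part (ii) of Theorem~\ref{theorem.Xdatanof}. The bulk of the work therefore lies in (A) $\Rightarrow$ (C). Under (A), Theorem~\ref{theorem.Yclassify} gives $Y(t)\to 0$ a.s., and Theorem~\ref{th.xto0yto0} then ensures that the union $\{X(t)\to 0\}\cup\{\|X(t)\|\to\infty\}$ is an almost sure event. Accordingly, it suffices to establish
\[
\mathbb{P}\bigl[\|X(t,\xi)\|\to\infty \text{ as } t\to\infty\bigr]=0 \quad\text{for each } \xi\in\mathbb{R}^d,
\]
and it is precisely at this step that the strengthened drift hypothesis \eqref{eq.fasy} becomes indispensable.

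To exclude the unbounded alternative, my plan is to compare $\|X\|^2$ to $\|Y\|^2$ via It\^o's formula. Applying it to each of $\|X\|^2$ and $\|Y\|^2$ and subtracting yields
\begin{equation*}
\|X(t)\|^2-\|Y(t)\|^2=\|\xi\|^2-2\int_0^t\langle X(s),f(X(s))\rangle\,ds+2\int_0^t\|Y(s)\|^2\,ds+K(t),
\end{equation*}
where $K(t):=2\int_0^t\langle X(s)-Y(s),\sigma(s)\,dB(s)\rangle$ is a continuous local martingale with $\langle K\rangle_t\leq 4\int_0^t\|X(s)-Y(s)\|^2\,\|\sigma(s)\|_F^2\,ds$. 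On the event $\{\|X(t)\|\to\infty\}\cap\{Y(t)\to 0\}$, invoke \eqref{eq.fasy} to fix $R,\phi>0$ and a pathwise random time $T$ beyond which $\|X(t)\|\geq R$, so that $\langle X(t),f(X(t))\rangle\geq\phi$ for $t\geq T$. Since $\|Y(t)\|^2\to 0$, a Cesaro-type argument gives $\int_0^t\|Y(s)\|^2\,ds=o(t)$. Rearranging the identity then produces the quantitative pathwise lower bound
\begin{equation*}
K(t)\geq \|X(t)\|^2-\|Y(t)\|^2-\|\xi\|^2+2\phi(t-T)-o(t)\geq \phi t
\end{equation*}
for all sufficiently large $t$ on this event, and a contradiction must be extracted from this.

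The main obstacle, and the reason Section~4 is devoted in its entirety to this step, is the exclusion of such linear pathwise growth of the continuous local martingale $K$. The natural attack is through a Dambis--Dubins--Schwarz time change combined with a law-of-the-iterated-logarithm estimate, bootstrapped with the identity above, which itself provides an \emph{a priori} pathwise bound on $\|X\|^2$ and hence on $\langle K\rangle_t$ through the noise intensity $\|\sigma\|_F^2$. The delicate point is that condition (A) does not force $\sigma\in L^2([0,\infty);\mathbb{R}^{d\times r})$, so $\langle K\rangle_\infty$ may be infinite and the pathwise interplay between $\|X\|$, $\|Y\|$ and $\sigma$ must be tracked with care in order to bound $\langle K\rangle_t$ in a manner compatible with the LIL and incompatible with $K(t)\geq \phi t$. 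Once the unbounded alternative is ruled out, (A) $\Rightarrow$ (C) follows and the chain of equivalences is complete.
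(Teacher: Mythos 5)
Your reduction is the same as the paper's: (C)$\Rightarrow$(B) is trivial, (B)$\Rightarrow$(A) follows from part (ii) of Theorem~\ref{theorem.Xdatanof}, and for (A)$\Rightarrow$(C) it suffices, by Theorem~\ref{theorem.Yclassify} and Theorem~\ref{th.xto0yto0}, to prove $\mathbb{P}[\|X(t,\xi)\|\to\infty]=0$. Your identity from subtracting the It\^o expansions of $\|X\|^2$ and $\|Y\|^2$ is correct (the $\|\sigma\|_F^2$ terms cancel, which is a nice touch), and the pathwise conclusion $K(t)\geq\phi t$ eventually on $\{\|X(t)\|\to\infty\}\cap\{Y(t)\to0\}$ is sound. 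But the proof stops exactly where the decisive work begins: no contradiction is actually extracted. The Dambis--Dubins--Schwarz/LIL route you sketch would need $\langle K\rangle_t$ to grow strictly slower than $t^2/\log\log t$, while the only available estimate is $\langle K\rangle_t\leq 4\int_0^t\|X(s)-Y(s)\|^2\|\sigma(s)\|_F^2\,ds$, which requires an a priori growth bound on $\|X\|$ --- precisely what is unknown on the event $\{\|X(t)\|\to\infty\}$; the proposed ``bootstrap'' is asserted rather than performed and, as stated, is circular. (Incidentally, Section~4 is not devoted to this step: it proves the upper bound in part (B) of Theorem~\ref{theorem.Xclassify}; the exclusion of the unbounded alternative for the present theorem occupies only part of Section~3.)

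The gap closes with a much simpler argument, which is essentially what the paper does (with the martingale $M$ coming from It\^o applied to $\|X\|^2$ alone, the term $\int_0^t\|\sigma(s)\|_F^2\,ds$ being handled by the Ces\`aro limit \eqref{eq.cesarosigma0}, itself a consequence of (A)): dichotomize on the quadratic variation at infinity, with no LIL needed. If $\langle K\rangle_\infty<\infty$, then $K(t)$ converges a.s., so $K(t)/t\to0$, contradicting $K(t)\geq\phi t$. If $\langle K\rangle_\infty=\infty$, then $\limsup_{t\to\infty}K(t)=+\infty$ and $\liminf_{t\to\infty}K(t)=-\infty$, so by continuity $K$ vanishes at arbitrarily large times $\tau$, and evaluating your inequality at $t=\tau$ gives $0=K(\tau)\geq\phi\tau>0$, a contradiction. (The paper runs the identical case analysis on $M$ and obtains $0\leq\|X(\tau)\|^2/\tau\leq-\lambda<0$.) With that dichotomy inserted, your argument is complete and essentially parallels the paper's; as written, the key step is missing.
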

Notice that no monotonicity conditions are required on
$\|\sigma\|^2_F$ in order for this result to hold. The condition
\eqref{eq.fasy} was not required to prove an analogous result in the
scalar case in~\cite{JAAR:2010a}. However, the condition is weaker
than the condition \eqref{eq.fliminfinfty} which was required in the
scalar case to secure the stability of solutions of
\eqref{eq.mainstocheqn} in \cite{JAJGAR:2009}.

We pause temporarily to discuss the condition \eqref{eq.fasy}. Is it a purely technical condition, which makes the proof of Theorem~\ref{theorem.Xiffsigma} more convenient, or is it representative of a class of conditions whose role is to provide some minimal strength of asymptotic equilibrium reversion in the finite--dimensional case,
so that stability is preserved when stochastic perturbations are present? We speculate that the condition is of the latter type. This is because the stochastic part of the equation can be \emph{transient} (in the sense that its norm can grow to infinity as $t\to\infty$).
An example of this possibility was given in  \cite{JAJCAR:2012}. 
In the scalar case we do not need any additional condition on $f$ because the perturbation $\int_0^t \sigma(s)\,dB(s)$,
being a time--changed one--dimensional Brownian motion, is \emph{recurrent}. 

To give some motivation as to why we expect some extra condition on $f$ in the presence of a cumulatively transient perturbation, we recall the deterministic results in Appleby and Cheng~\cite{JAJC:2011szeged}, and write the differential equation
\begin{equation*}  
x'(t)=-f(x(t))+g(t), \quad t>0; \quad x(0)=\xi,
\end{equation*}
in the integral form
\begin{equation} \label{eq.ch4odeintro}
x(t)=\xi-\int_0^t f(x(s))\,ds + \int_0^t g(s)\,ds, \quad t\geq 0.
\end{equation}
In the case when $g(t)\to 0$ but $\int_0^t g(s)\,ds=+\infty$ as $t\to\infty$, we have shown that unless $f$ has enough
strength to counteract the cumulative perturbation $\int_0^t g(s)\,ds$, it is possible that $x(t)\to\infty$ as $t\to\infty$.
If one writes the stochastic equation in integral form
\[
X(t)=\xi-\int_0^t f(X(s))\,ds + \int_0^t \sigma(s)\,dB(s), \quad t\geq 0,
\]
we can guess that when the cumulative perturbation $\int_0^t \sigma(s)\,dB(s)$ is not convergent (which
happens when $\sigma\notin L^2([0,\infty);\mathbb{R}^{d\times r})$), some minimal strength in $f$ may be needed to
keep the solution from escaping to infinity.

Of course, it is speculative to suggest that one can make inferences about the asymptotic behaviour of stochastic equations from deterministic ones, however 
close the structural correspondence. But for this class of equations, we already have evidence that there are remarkably close connections between 
admissible types of perturbations, and this also tend to justify the analogy between stochastic and deterministic equations. 
In the case when $g$ is in $L^1(0,\infty)$ and the cumulative perturbation $\int_0^t g(s)\,ds$ converges, it is
well--known (cf. e.g.,~\cite{JAJC:2011szeged}) that the solution of \eqref{eq.ch4odeintro} obeys $x(t)\to 0$ as $t\to\infty$ using only the global stability condition $xf(x)>0$  for $x\neq 0$. This condition is nothing other than the dissipative condition \eqref{eq.fglobalunperturbed} in one dimension. In this paper, a direct analogue of this result in the stochastic case is proven in Theorem~\ref{th.sigL2}, because the cumulative stochastic perturbation $\int_0^t \sigma(s)\,dB(s)$ converges when 
$\sigma\in L^2([0,\infty);\mathbb{R}^{d\times r})$. 

There is one final result in this section. It gives a complete
characterisation of the asymptotic behaviour of solutions of
\eqref{eq.mainstocheqn} under a strengthening of \eqref{eq.fasy},
namely
\begin{equation}\label{eq.fasybounded}
\liminf_{r\to\infty} \inf_{\|x\|=r}\frac{\langle
x,f(x)\rangle}{\|x\|}=+\infty.
\end{equation}
\eqref{eq.fasybounded} is a direct analogue of the condition needed
to give a classification of solutions of \eqref{eq.mainstocheqn} in
the scalar case. The following result is therefore a direct
generalisation of a scalar result from~\cite{JAAR:2010a} to finite
dimensions. 
\begin{theorem} \label{theorem.Xclassify}
Suppose $f$ obeys 
\eqref{eq.fglobalunperturbed} and \eqref{eq.fasybounded}. Suppose that $\sigma$ obeys \eqref{eq.sigmacns}.
 Suppose that $X$ is a continuous adapted process that obeys  \eqref{eq.mainstocheqn}.
 Then the following hold: 
\begin{itemize}
\item[(A)] If $S_h'$ obeys \eqref{eq.thetastableh}, then $\lim_{t\to\infty} X(t,\xi)=0$, a.s. for
each $\xi\in\mathbb{R}^d$.
\item[(B)] If $S_h'$ obeys \eqref{eq.thetaboundedh}, then there exists deterministic $0<c_1\leq c_2<+\infty$ such that
\[
c_1\leq \limsup_{t\to\infty} \|X(t,\xi)\|\leq c_2, \quad \liminf_{t\to\infty} \|X(t,\xi)\|=0, \quad \text{a.s., for
each $\xi\in\mathbb{R}^d$}. 
\]
Moreover, 
\begin{equation}\label{eq.aveXfXintto0}
\lim_{t\to\infty} \frac{1}{t}\int_0^t \langle
X(s),f(X(s))\rangle\,ds=0, \quad\text{a.s.}
\end{equation}
\item[(C)]  If $S_h'$ obeys \eqref{eq.thetaunstableh}, then $\limsup_{t\to\infty} \|X(t,\xi)\|=+\infty$ a.s., for
each $\xi\in\mathbb{R}^d$.
\end{itemize}
\end{theorem}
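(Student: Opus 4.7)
The three alternatives split neatly into a routine part and one difficult one. Part (A) is immediate from Theorem~\ref{theorem.Xiffsigma}, since the hypothesis \eqref{eq.fasybounded} is strictly stronger than \eqref{eq.fasy}. Part (C) is immediate from Theorem~\ref{theorem.Xlowerboundunbound}(A), which requires only continuity of $f$. Within (B), the lower bound $\limsup_{t\to\infty}\|X(t,\xi)\|\ge c_1$ a.s.\ is exactly the content of Theorem~\ref{theorem.Xlowerboundunbound}(B). Thus the genuine work lies in establishing the upper bound $\limsup_{t\to\infty}\|X(t,\xi)\|\le c_2$ a.s., the identity \eqref{eq.aveXfXintto0}, and $\liminf_{t\to\infty}\|X(t,\xi)\|=0$ a.s.

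For the upper bound my plan is to pass to the process $Z(t):=X(t)-Y(t)$. Subtracting \eqref{def.Y} from \eqref{eq.mainstocheqn} cancels the martingale term and leaves the random ordinary differential equation
\begin{equation*}
Z'(t) = -f(Z(t)+Y(t)) + Y(t),\qquad t\ge 0,\qquad Z(0)=\xi,
\end{equation*}
which can be studied pathwise. Theorem~\ref{theorem.Yclassify}(B) provides an almost sure event $\Omega^\ast$ on which $K(\omega):=\sup_{t\ge 0}\|Y(t,\omega)\|$ is finite. On $\Omega^\ast$ I would apply a Lyapunov argument to $\|Z\|^2$ combined with the strong reversion supplied by \eqref{eq.fasybounded}: when $\|Z\|$ is much larger than $K$, the point $Z+Y$ has comparable norm, $\langle Z+Y,f(Z+Y)\rangle/\|Z+Y\|$ can be made arbitrarily large, and the dissipative contribution $-\langle Z,f(Z+Y)\rangle$ dominates the perturbation $\langle Z,Y\rangle$ uniformly in $\omega$, forcing $\|Z\|$ back into a bounded region. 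This is where the strengthened hypothesis \eqref{eq.fasybounded} is used rather than merely \eqref{eq.fasy}: the reversion must overpower a persistent, non-vanishing perturbation $Y$, and only \eqref{eq.fasybounded} guarantees this. Turning this heuristic into a quantitative pathwise bound is the main obstacle, and is the reason the entirety of Section~4 is devoted to the upper bound. Once $\|Z\|$ is bounded pathwise, $\|X\|\le\|Z\|+\|Y\|$ supplies the constant $c_2$.

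Given the upper bound, the remaining conclusions follow from It\^o's formula and two applications of the martingale strong law. Applying It\^o to $\|Y\|^2$ and to $\|X\|^2$ gives
\begin{align*}
\|Y(t)\|^2 &= -2\int_0^t\|Y(s)\|^2\,ds + \int_0^t\|\sigma(s)\|_F^2\,ds + 2\int_0^t\langle Y(s),\sigma(s)\,dB(s)\rangle,\\
\|X(t)\|^2 &= \|\xi\|^2 - 2\int_0^t\langle X(s),f(X(s))\rangle\,ds + \int_0^t\|\sigma(s)\|_F^2\,ds + 2\int_0^t\langle X(s),\sigma(s)\,dB(s)\rangle.
\end{align*}
Dividing the first identity by $t$ and combining the boundedness of $\|Y\|$, the time-average bound $t^{-1}\int_0^t\|Y(s)\|^2\,ds\to 0$ from \eqref{eq.liminfYaveY0}, and the martingale strong law (whose hypothesis is met because the quadratic variation of the It\^o integral is at most $4K^2\int_0^t\|\sigma\|_F^2\,ds$), I would deduce $t^{-1}\int_0^t\|\sigma(s)\|_F^2\,ds\to 0$ a.s. Substituting this into the second identity and invoking the martingale strong law once more on $\int_0^t\langle X,\sigma\,dB\rangle$, whose quadratic variation is controlled by $c_2^2\int_0^t\|\sigma\|_F^2\,ds$, yields \eqref{eq.aveXfXintto0}. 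Finally, should $\liminf_{t\to\infty}\|X(t,\xi)\|\ge c_0>0$ on a set of positive probability, then on that set $X$ would be eventually confined to a compact annulus not containing the origin, on which $\langle x,f(x)\rangle$ attains a strictly positive minimum by continuity of $f$ and \eqref{eq.fglobalunperturbed}; the left-hand side of \eqref{eq.aveXfXintto0} would then be bounded below by a positive constant, a contradiction. Hence $\liminf_{t\to\infty}\|X(t,\xi)\|=0$ a.s.
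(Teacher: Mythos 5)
Your handling of parts (A), (C), the lower bound in (B), the limit \eqref{eq.aveXfXintto0} and the zero limit inferior is correct and essentially the paper's own argument: (A) follows from Theorem~\ref{theorem.Xiffsigma}, (C) and the lower bound from Theorem~\ref{theorem.Xlowerboundunbound}, and the contradiction argument for $\liminf_{t\to\infty}\|X(t)\|=0$ together with the two applications of the martingale strong law are exactly what the paper does once boundedness of $\|X\|$ is known (your detour through It\^o's formula for $\|Y\|^2$ to obtain $t^{-1}\int_0^t\|\sigma(s)\|_F^2\,ds\to0$ works, though the paper gets this directly from $S_h'(\epsilon)<+\infty$ for $\epsilon>\epsilon'$, which forces $\int_{nh}^{(n+1)h}\|\sigma(s)\|_F^2\,ds\to0$). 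All of this, however, is contingent on the upper bound, and that is where your plan has a genuine gap.

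Writing $Z=X-Y$ and differentiating, $\tfrac{d}{dt}\|Z\|^2=-2\langle Z,f(Z+Y)\rangle+2\langle Z,Y\rangle=-2\langle X,f(X)\rangle+2\langle Y,f(X)\rangle+2\langle Z,Y\rangle$. The hypotheses \eqref{eq.fglobalunperturbed} and \eqref{eq.fasybounded} control only the radial component $\langle x,f(x)\rangle/\|x\|$; they give no control of $\|f(x)\|$. Hence the cross term $\langle Y,f(X)\rangle$, of size up to $\|Y\|\,\|f(X)\|$, need not be dominated by $\langle X,f(X)\rangle$ however large $\|Z\|$ is: in $d=2$ take $f(x)=x+e^{\|x\|^2}Rx$ with $R$ the rotation by $\pi/2$, so that $\langle x,f(x)\rangle=\|x\|^2$ (both hypotheses hold) while $\|f(x)\|$ grows like $e^{\|x\|^2}\|x\|$. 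Your claim that ``$-\langle Z,f(Z+Y)\rangle$ dominates $\langle Z,Y\rangle$ uniformly once $\|Z\|\gg K$'' is therefore unsupported; the drift of $\|Z\|^2$ can be positive at arbitrarily large $\|Z\|$ when $Y\neq 0$. (This obstacle is absent in the convergence analysis of Theorem~\ref{th.xto0yto0}, where $Y(t)\to0$ and the trajectory of $z$ is confined to compact sets, so the error $\|f(z+Y)-f(z)\|$ is controlled by a modulus of continuity evaluated at $\|Y\|\to0$; in case (B) the perturbation $Y$ is persistently of order one and that device is unavailable.) The essential point your sketch misses is that one cannot cancel the martingale first and then project onto $Z$: one must work with $\|X\|^2$ itself, where only the radial quantity $\langle X,f(X)\rangle$ ever appears. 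This is what Section~4 of the paper does: It\^o applied to $\|X\|^2$ with the projected scalar coefficients $\bar\sigma_j$ of \eqref{def.sigbarj}, the locally Lipschitz radial minorant $\phi_1\leq\phi$ of Lemma~\ref{lemma.phi1loclip}, a stochastic comparison $\|X(t)\|\leq Z(t)$ with the scalar SDE \eqref{def.ZcompSDE} (Lemma~\ref{lemma.XZcomp}), and then boundedness of $Z=W+Y_0$ via the Gaussian-tail/Borel--Cantelli estimate for $Y_0$ (Lemma~\ref{lemma.Y0bounded}, which also furnishes the \emph{deterministic} constant the theorem requires, whereas your pathwise bound would a priori depend on the random quantity $K(\omega)=\sup_{t\geq0}\|Y(t,\omega)\|$) together with the pathwise arguments of Lemmas~\ref{lemma.Znottoinfty} and~\ref{lemma.Wbounded}. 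As written, your proof of the upper bound does not go through.
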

Before moving to the next section, we remark on the limit in \eqref{eq.aveXfXintto0}. Since $f$ obeys $\langle x,f(x)\rangle >0$ for all $x\neq 0$, 
\eqref{eq.aveXfXintto0} implies that despite $\|X\|$ assuming values bounded away from zero infinitely often, ``most of the time'' the process $\|X\|$ is close to zero rather than to its upper bounds.  

\subsection{Sufficient conditions on $\sigma$ for stability and asymptotic classification}
We deduce some conditions which are more easily verified than
\eqref{eq.thetastableh}, \eqref{eq.thetaboundedh} or \eqref{eq.thetaunstableh}.
In view of Theorem~\ref{th.sigL2}, in what
follows, we therefore concentrate on the case when $\sigma$ is not
in $L^2([0,\infty);\mathbb{R}^{d\times r})$. In this case, there
exists a pair of integers $(i,j)\in
\{1,\ldots,d\}\times\{1,\ldots,r\}$ such that $\sigma_{ij}\not\in
L^2([0,\infty);\mathbb{R})$.
Define
\begin{equation} \label{def.sigmai}
\sigma_i^2(t)=\sum_{l=1}^r \sigma_{il}^2(t), \quad t\geq 0.
 \end{equation}
 Then $\sigma_i\not\in L^2(0,\infty)$, and it is possible to define a number $T_i>0$ such that $\int_0^t e^{2s}\sigma_i^2(s)\,ds>e^e$
 for $t>T_i$ and so one can define a function $\Sigma_i:[T_i,\infty)\to [0,\infty)$ by
\begin{equation} \label{def.Sigmai}
\Sigma_i(t)=\left( \int_0^t e^{-2(t-s)}\sigma_i^2(s)\,ds
\right)^{1/2} \left(\log\log \int_0^t e^{2s}\sigma_i^2(s)\,ds
\right)^{1/2}, \quad t\geq T_i.
\end{equation}
The significance of the function $\Sigma_i$ defined in
\eqref{def.Sigmai} is that it characterises the largest possible
fluctuations of $Y_i(t)=\langle Y(t),\mathbf{e}_i\rangle$ for $i=1,\ldots,d$ when $\sigma_i$ is not square integrable.
\begin{equation} \label{eq.YiasySig} \limsup_{t\to\infty}
\frac{|Y_i(t)|}{\Sigma_i(t)}=\sqrt{2}, \quad\text{a.s.}
\end{equation}
This result follows by applying the Law of the iterated logarithm for martingales to $M(t):=\int_0^t e^s
\sigma_i(s)\,d\bar{B}_i(s)$. This holds because $\sigma_i\not \in
L^2([0,\infty);\mathbb{R}^{d\times r})$ implies that $\langle
M\rangle (t)=\int_0^t e^{2s} \sigma_i^2(s)\,ds\to\infty$ as
$t\to\infty$. 

Hence, by 
Theorem~\ref{th.xto0yto0}, the functions $\Sigma_i$ determine the asymptotic behaviour of $X$.
Let $N\subseteq \{1,2,\ldots,d\}$ be defined by
\begin{equation} \label{def.N}
N=\{i\in \{1,2,\ldots,d\}\,:\,\sigma_i\not \in L^2(0,\infty)\}.
\end{equation}
Note that if $i\not\in N$, then $\sigma_i\in L^2(0,\infty)$ and we
immediately have that $Y_i(t)\to 0$ as $t\to\infty$ a.s.
\begin{theorem} \label{th.sigmanotl2xto0}
Suppose that $f$ satisfies \eqref{eq.fglobalunperturbed}
and \eqref{eq.fasy}. Suppose that $\sigma$ obeys
\eqref{eq.sigmacns} and $\sigma\not\in
L^2([0,\infty);\mathbb{R}^{d\times r})$. Suppose that $X$ is a continuous adapted process which obeys
\eqref{eq.mainstocheqn}. Let $N$ be the set defined in \eqref{def.N}
and $\Sigma_i$ be defined by \eqref{def.Sigmai} for each $i\in N$.
\begin{itemize}
\item[(i)] If $\Sigma_i(t)\to 0$ as $t\to\infty$ for each $i\in N$, then $X$ obeys \eqref{eq.stochglobalstable}.
\item[(ii)] If $X$ obeys \eqref{eq.stochglobalstable}, then
$\liminf_{t\to\infty}\Sigma_i(t)=0$ for each $i\in N$.
\item[(iii)] If $\liminf_{t\to\infty}\Sigma_i(t)>0$ for some $i\in N$, then $\mathbb{P}[\lim_{t\to\infty} X(t)=0]=0$.
\item[(iv)] If $\lim_{t\to\infty}\Sigma_i(t)=\infty$ for some $i\in N$ then $\limsup_{t\to\infty} \|X(t)\|=\infty$ a.s.
\end{itemize}
\end{theorem}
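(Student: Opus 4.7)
The plan is to translate each statement about $\Sigma_i$ into a statement about $\|Y\|$ using the componentwise law of the iterated logarithm \eqref{eq.YiasySig}, then convert that into the corresponding statement about $S_h'$ via Theorem~\ref{theorem.Yclassify}, and finally deduce the claim about $X$ by invoking Theorem~\ref{theorem.Xiffsigma} (whose hypothesis \eqref{eq.fasy} is in force here) or Theorem~\ref{theorem.Xlowerboundunbound}. For components $i\notin N$, $\sigma_i\in L^2(0,\infty)$ gives $Y_i(t)\to 0$ a.s. directly, so only the components in $N$ require the LIL machinery.

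For part (i), fix $\omega$ in the a.s. event on which \eqref{eq.YiasySig} holds for every $i\in N$ and $Y_i(t)\to 0$ for every $i\notin N$. For $i\in N$, $\limsup_{t\to\infty}|Y_i(t)|\le (\sqrt{2}+1)\limsup_{t\to\infty}\Sigma_i(t)=0$, so $Y_i(t)\to 0$. Combining components gives $\|Y(t)\|\to 0$ a.s. By Theorem~\ref{theorem.Yclassify} this rules out cases (B) and (C), so $S_h'$ obeys \eqref{eq.thetastableh}. Applying Theorem~\ref{theorem.Xiffsigma} (whose hypothesis \eqref{eq.fasy} is assumed) then yields $X(t,\xi)\to 0$ a.s. for every $\xi\in\mathbb{R}^d$. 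Part (iv) is analogous in the opposite direction: if $\Sigma_i(t)\to\infty$ for some $i\in N$, pick a sequence $t_n\to\infty$ along which $|Y_i(t_n)|/\Sigma_i(t_n)\to\sqrt{2}$ (possible by \eqref{eq.YiasySig}); then $|Y_i(t_n)|\to\infty$, so $\limsup_{t\to\infty}\|Y(t)\|=\infty$ a.s. This rules out cases (A) and (B) of Theorem~\ref{theorem.Yclassify}, forcing $S_h'$ to satisfy \eqref{eq.thetaunstableh}, and Theorem~\ref{theorem.Xlowerboundunbound}(A) yields $\limsup_{t\to\infty}\|X(t)\|=+\infty$ a.s.

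Parts (ii) and (iii) follow from a single argument. Assume, for some $i\in N$, that $\liminf_{t\to\infty}\Sigma_i(t)=:\ell>0$. Choose $T$ with $\Sigma_i(t)\ge \ell/2$ for all $t\ge T$. Working on the a.s. event where \eqref{eq.YiasySig} holds, there is a sequence $t_n\to\infty$ with $|Y_i(t_n)|/\Sigma_i(t_n)\ge \sqrt{2}/2$; for $t_n\ge T$ this gives $|Y_i(t_n)|\ge \sqrt{2}\,\ell/4>0$, so $\limsup_{t\to\infty}\|Y(t)\|>0$ a.s. Hence $\mathbb{P}[Y(t)\to 0]=0$, and by Theorem~\ref{theorem.Yclassify} the condition \eqref{eq.thetastableh} fails. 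The equivalence (A)$\Leftrightarrow$(B) of Theorem~\ref{theorem.Xiffsigma} then gives $\mathbb{P}[X(t,\xi)\to 0]=0$ for every $\xi$, which is (iii); the contrapositive of this implication is precisely (ii).

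There is no single hard step: the result is essentially a bookkeeping exercise that bundles the LIL bound \eqref{eq.YiasySig} with the earlier classification theorems. The only point requiring a moment's care is ensuring that $\liminf\Sigma_i>0$ genuinely prevents $Y_i(t)\to 0$ along the LIL-attaining sequence, which is what the two-sided bound on $\Sigma_i$ in the chosen window provides, and this is carried out explicitly in the paragraph above.
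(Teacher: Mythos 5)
Your proof is correct and follows essentially the route the paper intends (the paper gives no separate proof, only the remark that \eqref{eq.YiasySig} together with Theorem~\ref{th.xto0yto0} determines the behaviour of $X$): you translate the hypotheses on $\Sigma_i$ into statements about $Y$ via the law of the iterated logarithm, use the trichotomy in Theorem~\ref{theorem.Yclassify} to identify which regime $S_h'$ is in, and then invoke Theorem~\ref{theorem.Xiffsigma} and Theorem~\ref{theorem.Xlowerboundunbound}, exactly as the surrounding discussion suggests. No gaps; the handling of the components $i\notin N$ and the two-sided bound in the $\liminf\Sigma_i>0$ case are done correctly.
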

%
%

In our next result we show that the asymptotic behaviour of the solution of \eqref{eq.mainstocheqn} can be classified
according as to whether a certain limit exists.
\begin{theorem} \label{theorem.XsiglogtL}
Suppose $f$ obeys 
\eqref{eq.fglobalunperturbed} and \eqref{eq.fasybounded}. Suppose that $\sigma$ obeys
\eqref{eq.sigmacns}.  Suppose that $X$ is a continuous adapted process that obeys  \eqref{eq.mainstocheqn}.
Suppose that there exists $h>0$ and $L_h\in[0,\infty]$ such that
\begin{equation} \label{eq.intsiglogntoLh}
\lim_{n\to\infty} \int_{nh}^{(n+1)h} \|\sigma(s)\|_F^2\,ds \cdot \log n =L_h.
\end{equation}
\begin{itemize}
\item[(i)] If $L_h=0$, then $\lim_{t\to\infty} X(t,\xi)=0$, a.s. for each $\xi\in\mathbb{R}^d$.
\item[(ii)] If $L_h\in (0,\infty)$, then there exist $0\leq c_1\leq c_2<\infty$ independent of $\xi$ such that
\[
c_2\leq \limsup_{t\to\infty} \|X(t,\xi)\|\leq c_2, \quad \liminf_{t\to\infty} \|X(t,\xi)\|=0, \quad \text{a.s.}
\]
for each $\xi\in\mathbb{R}^d$.
\item[(iii)] If $L_h=+\infty$, then $\limsup_{t\to\infty} \|X(t,\xi)\|=+\infty$ a.s., for
each $\xi\in\mathbb{R}^d$.
\end{itemize}
\end{theorem}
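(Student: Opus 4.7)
The plan is to reduce Theorem~\ref{theorem.XsiglogtL} directly to Theorem~\ref{theorem.Xclassify} by translating the hypothesis \eqref{eq.intsiglogntoLh} into the appropriate case (A), (B), or (C) on $S_h'(\epsilon)$. Set
\[
a_n := \int_{nh}^{(n+1)h} \|\sigma(s)\|_F^2\,ds,
\]
so that by hypothesis $a_n\log n \to L_h$ as $n\to\infty$, and $S_h'(\epsilon)=\sum_{n=1}^\infty \sqrt{a_n}\exp(-\epsilon^2/(2a_n))$. Since both of the functions $a\mapsto\sqrt{a}$ and $a\mapsto e^{-\epsilon^2/(2a)}$ are well-behaved away from $0$, all the action in $S_h'(\epsilon)$ happens in the tail, so the exact limit of $a_n\log n$ is what controls convergence.

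For case (i), $L_h=0$: given $\epsilon>0$, choose $\delta>0$ with $\epsilon^2/(2\delta)>2$. For $n$ sufficiently large, $a_n<\delta/\log n$, hence $\sqrt{a_n}\le\sqrt{\delta}$ and $\exp(-\epsilon^2/(2a_n))\le n^{-\epsilon^2/(2\delta)}$, so the summand is dominated by $n^{-2}$ and $S_h'(\epsilon)<+\infty$. This is \eqref{eq.thetastableh}, and case (A) of Theorem~\ref{theorem.Xclassify} gives the conclusion. For case (iii), $L_h=+\infty$: fix $\epsilon>0$ and $M>\epsilon^2$. For $n$ large, $a_n>M/\log n$, whence $\sqrt{a_n}\ge\sqrt{M/\log n}$ and $\exp(-\epsilon^2/(2a_n))\ge n^{-\epsilon^2/(2M)}\ge n^{-1/2}$. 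The summand is therefore bounded below by a constant multiple of $1/(n^{1/2}\sqrt{\log n})$, so $S_h'(\epsilon)=+\infty$, confirming \eqref{eq.thetaunstableh} and yielding case (C).

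For the delicate case (ii), $L_h\in(0,\infty)$: I claim $\epsilon':=\sqrt{2L_h}$ is the threshold. If $\epsilon>\sqrt{2L_h}$, pick $L^+>L_h$ with $\epsilon^2/(2L^+)>1$; for $n$ large $a_n<L^+/\log n$, giving $\sqrt{a_n}e^{-\epsilon^2/(2a_n)}\le\sqrt{L^+/\log n}\cdot n^{-\epsilon^2/(2L^+)}$, whose sum converges. If $\epsilon<\sqrt{2L_h}$, pick $L^-<L_h$ with $\epsilon^2/(2L^-)<1$; for $n$ large $a_n>L^-/\log n$, giving $\sqrt{a_n}e^{-\epsilon^2/(2a_n)}\ge\sqrt{L^-/\log n}\cdot n^{-\epsilon^2/(2L^-)}$, whose sum diverges. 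Thus \eqref{eq.thetaboundedh} holds, and case (B) of Theorem~\ref{theorem.Xclassify} produces the stated conclusions, including $\liminf_{t\to\infty}\|X(t,\xi)\|=0$.

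I do not expect any serious obstacle here: the entire proof is a calculation comparing $a_n$ with $L_h/\log n$ in each regime, followed by a direct appeal to Theorem~\ref{theorem.Xclassify}. The only small subtlety is ensuring both the $\sqrt{a_n}$ prefactor and the exponential factor cooperate in the divergence arguments (cases (iii) and the lower half of (ii)), which is handled by the explicit lower bounds $a_n\ge L^-/\log n$ respectively $a_n\ge M/\log n$ for large $n$. Note that the threshold value $\epsilon'=\sqrt{2L_h}$ itself does not enter the final statement, since case (B) of Theorem~\ref{theorem.Xclassify} is indifferent to the precise value of $\epsilon'$.
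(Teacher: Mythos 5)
Your proposal is correct and follows essentially the same route as the paper, which states the result as a corollary of Theorem~\ref{theorem.Xclassify} via precisely this translation of the limit $L_h$ into the three finiteness regimes of $S_h'(\epsilon)$; your comparison estimates (with threshold $\epsilon'=\sqrt{2L_h}$ in case (ii)) simply make explicit the observation the paper leaves unproved.
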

The result is a corollary of Theorem~\ref{theorem.Xclassify}, together with the observation that if $L_h=0$, then
$S_h'(\epsilon)<+\infty$ for all $\epsilon>0$; that $L_h\in (0,\infty)$ implies that there exists $\epsilon'>0$
such that $S_h'(\epsilon)<+\infty$ for all $\epsilon>\epsilon'$ and  $S_h'(\epsilon)=+\infty$ for all $\epsilon<\epsilon'$, while  $L_h=\infty$ implies that $S_h'(\epsilon)=+\infty$ for all $\epsilon>0$.

If pointwise conditions are preferred  to \eqref{eq.intsiglogntoLh} in Theorem~\ref{theorem.XsiglogtL}, we may instead impose the condition
\begin{equation}  \label{eq.siglogtoL}
\lim_{t\to\infty} \|\sigma(t)\|^2_F\log t=L\in [0,\infty]
\end{equation}
on $\sigma$. In this case, if $L=0$, then $L_h=0$ in \eqref{eq.intsiglogntoLh}, and part (i) of Theorem~\ref{theorem.XsiglogtL} applies; if $L\in (0,\infty)$, then $L_h=hL$ in \eqref{eq.intsiglogntoLh} and part (ii) of
Theorem~\ref{theorem.XsiglogtL} applies; and if $L=\infty$, then $L_h=+\infty$ in \eqref{eq.intsiglogntoLh},
and part (iii) of Theorem~\ref{theorem.XsiglogtL} applies.

If limits of the form \eqref{eq.intsiglogntoLh} or \eqref{eq.siglogtoL} do not exist, but appropriate limits inferior or superior are finite and bounded away from zero, some results on boundedness are still available. The following result is representative.
\begin{theorem} \label{theorem.stochastablegen}
Suppose that $f$ obeys \eqref{eq.fglobalunperturbed} 
 and \eqref{eq.fasybounded}. Suppose that $\sigma$ obeys
\eqref{eq.sigmacns}. Suppose that $X$ is a continuous adapted process that obeys \eqref{eq.mainstocheqn}.
\begin{itemize}
\item[(i)] If $\liminf_{t\to\infty} \|\sigma(t)\|^2_F\log t>0$, then $\limsup_{t\to\infty} \|X(t)\|\geq c_1$ a.s.
\item[(ii)] If $\limsup_{t\to\infty} \|\sigma(t)\|^2_F\log t<+\infty$, then
$\limsup_{t\to\infty} \|X(t)\|\leq c_2$ a.s.
\item[(iii)] If
\[
0<\liminf_{t\to\infty} \|\sigma(t)\|^2_F\log t\leq \limsup_{t\to\infty}  \|\sigma(t)\|^2_F\log t<+\infty,
\]
then
\[
0<c_1\leq \limsup_{t\to\infty} \|X(t)\|\leq c_2, \quad \text{a.s.}
\]
\end{itemize}
\end{theorem}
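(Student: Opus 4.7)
The strategy is to deduce Theorem~\ref{theorem.stochastablegen} directly from Theorem~\ref{theorem.Xclassify} by translating the pointwise bounds on $\|\sigma(t)\|_F^2\log t$ into finiteness statements for the series $S_h'(\epsilon)$. Write $a_n=\int_{nh}^{(n+1)h}\|\sigma(s)\|_F^2\,ds$ and record, as the only technical input, that the map $g(x)=\sqrt{x}\exp(-\epsilon^2/(2x))$ (extended continuously by $g(0)=0$) is strictly increasing on $[0,\infty)$, since $(\log g)'(x)=1/(2x)+\epsilon^2/(2x^2)>0$. This monotonicity lets us compare the terms of $S_h'$ to terms of a reference series $\sqrt{\eta/\log n}\,n^{-\epsilon^2/(2\eta)}$ which, by an integral comparison with $\int dx/(x^\alpha\sqrt{\log x})$, diverges when $\epsilon^2/(2\eta)\le 1$ and converges when $\epsilon^2/(2\eta)>1$.

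For part (i), fix $\delta$ with $0<\delta<\liminf_{t\to\infty}\|\sigma(t)\|_F^2\log t$. Then $\|\sigma(t)\|_F^2\geq\delta/\log t$ for $t$ large, and because $1/\log s$ is decreasing, $a_n\geq h\delta/\log((n+1)h)$, whence $\liminf_{n\to\infty} a_n\log n\geq h\delta$. For any $\eta<h\delta$ we eventually have $a_n\geq\eta/\log n$, so by monotonicity of $g$,
\[
\sqrt{a_n}\exp\!\Bigl(-\tfrac{\epsilon^2}{2a_n}\Bigr)\ \geq\ \sqrt{\eta/\log n}\;n^{-\epsilon^2/(2\eta)}.
\]
The right-hand series is divergent for $\epsilon<\sqrt{2\eta}$, so $S_h'(\epsilon)=+\infty$ for every $\epsilon<\sqrt{2h\delta}$. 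Consequently $S_h'$ satisfies either \eqref{eq.thetaunstableh} or \eqref{eq.thetaboundedh}, and parts (B) and (C) of Theorem~\ref{theorem.Xclassify} both yield a deterministic $c_1>0$ with $\limsup_{t\to\infty}\|X(t)\|\geq c_1$ a.s.

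Part (ii) is the symmetric argument. Fix $K>\limsup_{t\to\infty}\|\sigma(t)\|_F^2\log t$; then for $\eta>Kh$, eventually $a_n\leq\eta/\log n$, and by monotonicity of $g$ the $n$-th term of $S_h'$ is bounded above by $\sqrt{\eta/\log n}\,n^{-\epsilon^2/(2\eta)}$, which is summable when $\epsilon>\sqrt{2\eta}$. Hence $S_h'(\epsilon)<+\infty$ for all $\epsilon>\sqrt{2Kh}$, which places $S_h'$ in case \eqref{eq.thetastableh} or \eqref{eq.thetaboundedh}; in either case Theorem~\ref{theorem.Xclassify} yields $\limsup_{t\to\infty}\|X(t)\|\leq c_2$ a.s. Part (iii) is then immediate: the joint hypothesis forces $S_h'$ into case \eqref{eq.thetaboundedh}, and Theorem~\ref{theorem.Xclassify}(B) delivers the full two-sided bound. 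There is no genuine obstacle; the only care needed is the treatment of indices with $a_n=0$, which the continuous extension $g(0)=0$ handles harmlessly in both comparisons.
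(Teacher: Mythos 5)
Your proposal is correct and follows exactly the route the paper indicates: translating the pointwise bounds on $\|\sigma(t)\|_F^2\log t$ into the statements that $S_h'(\epsilon)=+\infty$ for all small $\epsilon$ (resp. $S_h'(\epsilon)<+\infty$ for all large $\epsilon$) and then invoking Theorem~\ref{theorem.Xclassify}. The paper only sketches this observation, while you supply the missing details (monotonicity of $x\mapsto\sqrt{x}e^{-\epsilon^2/(2x)}$ and the comparison with $\sum n^{-\epsilon^2/(2\eta)}/\sqrt{\log n}$), and these details are accurate.
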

These conclusions follow from the observation that $\liminf_{t\to\infty} \|\sigma(t)\|_F^2\log t>0$
implies that $S_h'(\epsilon)=+\infty$ for all $\epsilon<\epsilon_1$ and that $\limsup_{t\to\infty}  \|\sigma(t)\|^2_F\log t<+\infty$ implies that
$S_h'(\epsilon)<+\infty$ for all $\epsilon>\epsilon_2$, in conjunction with part (B) of Theorem~\ref{theorem.Xclassify}.

In \cite{ChanWill:1989}, Chan and Williams have proven in the case
when $t\mapsto\sigma^2(t)$ is decreasing, that $Y$ obeys
\eqref{eq.Ytto0} if and only if $\sigma$ obeys
\eqref{eq.sigmalogto0}. Our final result shows that this pointwise monotonicity condition can be
weakened. Naturally, our conditions on $f$ are also weaker.
\begin{theorem} \label{theorem.eqvt}
Suppose that $f$  obeys \eqref{eq.fglobalunperturbed}
and \eqref{eq.fasy}. Suppose that $\sigma$ obeys
\eqref{eq.sigmacns} and 
that the sequence $n\mapsto \int_{nh}^{(n+1)h} \|\sigma(s)\|^2_F\,ds$ is non--increasing.
Suppose that $X$ is a continuous adapted process which obeys \eqref{eq.mainstocheqn}.
Then the following are equivalent:
\begin{itemize}
\item[(A)] $\sigma$ obeys $\lim_{n\to\infty} \int_{nh}^{(n+1)h} \|\sigma(s)\|^2_F\,ds \cdot \log n=0$;
\item[(B)] $\lim_{t\to\infty} X(t,\xi)=0$ with positive probability for some $\xi\in\mathbb{R}^d$;
\item[(C)] $\lim_{t\to\infty} X(t,\xi)=0$ a.s. for each $\xi\in\mathbb{R}^d$.
\end{itemize}
\end{theorem}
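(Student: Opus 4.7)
The plan is to reduce the equivalence to a purely analytic statement about the series $S_h'(\epsilon)$. By Theorem~\ref{theorem.Xiffsigma}, the hypotheses \eqref{eq.fglobalunperturbed} and \eqref{eq.fasy} on $f$ together with \eqref{eq.sigmacns} imply that (B), (C) and the condition ``$S_h'(\epsilon)<+\infty$ for every $\epsilon>0$'' are already equivalent. Thus it suffices to show that, under the assumed monotonicity of $a_n:=\int_{nh}^{(n+1)h}\|\sigma(s)\|_F^2\,ds$, condition (A), namely $a_n\log n\to 0$, is equivalent to finiteness of $S_h'(\epsilon)$ for every $\epsilon>0$.

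For the direction (A) $\Rightarrow$ $S_h'(\epsilon)<\infty$ for all $\epsilon>0$, I would fix $\epsilon>0$ and use (A) to choose $N$ so that $a_n<\epsilon^2/(4\log n)$ for $n\geq N$. Then $\exp(-\epsilon^2/(2a_n))\leq n^{-2}$ for such $n$, and since $(a_n)$ is eventually bounded, the tail $\sum_{n\geq N}\sqrt{a_n}\,n^{-2}$ converges. The monotonicity is not needed here.

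The converse is the heart of the matter, and here the monotonicity is essential. I would argue by contraposition: assume $a_n\log n\not\to 0$ and produce $\epsilon>0$ with $S_h'(\epsilon)=\infty$. Extract a subsequence $(n_k)$ with $n_{k+1}\geq 2n_k$ and $a_{n_k}\log n_k\geq \delta$ for some $\delta>0$. Because $(a_n)$ is non--increasing, for every $m\leq n_k$ we have $a_m\geq a_{n_k}\geq \delta/\log n_k$. Pick $\epsilon$ with $\epsilon^2<2\delta$, set $\alpha:=\epsilon^2/(2\delta)<1$, and estimate
\begin{equation*}
\exp\!\left(-\frac{\epsilon^2}{2a_m}\right)\geq \exp\!\left(-\frac{\epsilon^2\log n_k}{2\delta}\right)=n_k^{-\alpha},\qquad 1\leq m\leq n_k.
\end{equation*}
Retaining only these terms in the definition of $S_h'(\epsilon)$ gives
\begin{equation*}
S_h'(\epsilon)\;\geq\; \sum_{m=1}^{n_k}\sqrt{a_m}\,n_k^{-\alpha}\;\geq\; n_k\sqrt{a_{n_k}}\,n_k^{-\alpha}\;\geq\; \sqrt{\delta}\,\frac{n_k^{1-\alpha}}{\sqrt{\log n_k}},
\end{equation*}
and the right--hand side tends to $+\infty$ as $k\to\infty$, contradicting $S_h'(\epsilon)<+\infty$. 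Hence (A) must hold, completing the proof of the equivalence.

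The only delicate point is the choice of $\epsilon^2<2\delta$, which is precisely what makes the exponent $1-\alpha$ positive and drives the partial sums to infinity; without the monotonicity one could only access the lower bound $a_m\geq\delta/\log n_k$ at the single index $m=n_k$ rather than on the whole block $m\leq n_k$, and the argument would collapse. Once this step is in place, the remainder is essentially an invocation of the already--established equivalence from Theorem~\ref{theorem.Xiffsigma}.
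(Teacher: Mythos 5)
Your proposal is correct and follows essentially the same route as the paper: reduce (B) and (C) to the condition $S_h'(\epsilon)<+\infty$ for all $\epsilon>0$ via Theorem~\ref{theorem.Xiffsigma}, and then show that under the monotonicity of $n\mapsto\int_{nh}^{(n+1)h}\|\sigma(s)\|_F^2\,ds$ this is equivalent to (A). The paper states this last equivalence only as an observation, whereas you supply the analytic details (the $n^{-2}$ tail bound in one direction and the block estimate with $\alpha=\epsilon^2/(2\delta)<1$ in the other), and these details are sound.
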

Stronger monotonicity conditions which can be imposed are that
\[
t\mapsto \Sigma^2_1(t)=\int_t^{t+1}\|\sigma(s)\|^2_F\,ds, \quad t\mapsto \Sigma^2_2(t)=\|\sigma(t)\|^2_F,
\]
are non--increasing. In this case statement (A) in Theorem~\ref{theorem.eqvt} can be replaced by
\[
\lim_{t\to\infty} \Sigma_i^2(t)\log t=0, \quad i=1,2.
\]
Theorem~\ref{theorem.eqvt} is proven by observing that when $n\mapsto \int_{nh}^{(n+1)h} \|\sigma(s)\|^2_F\,ds$ is non--increasing, then $\lim_{n\to\infty} \int_{nh}^{(n+1)h} \|\sigma(s)\|^2_F\,ds \cdot \log n=0$ is equivalent to
$S_h'(\epsilon)<+\infty$ for all $\epsilon>0$, which by Theorem~\ref{theorem.Xiffsigma}, is known to be equivalent to
statements (B) and (C).

\subsection{Deterministic analysis of global stability and the dissipative condition}
%
Before turning to the proofs of our results, we wish to comment on the dissipative condition \eqref{eq.fglobalunperturbed} and how it relates to 
hypotheses on $f$ made when establishing global asymptotic stability for  the ordinary differential equation \eqref{eq.unperturbed}.
The analysis of such good sufficient conditions on $f$ 
%
forms a substantial body of work, and rather than attempting to trace this, we mention the original contributions of Olech and Hartman in a series of papers in the 1960s.
In Hartman \cite{hart:1961}, global stability is assured by
\begin{equation} \label{eq.introch4h}
[J(x)]_{ij}=\frac{\partial f_i}{\partial x_j}(x) \text{ is such that $H(x):=\frac{1}{2}(J(x)+J(x)^T)$ is negative definite}
\end{equation}
In the two--dimensional case, Olech  \cite{olech:1960} proves that
\begin{equation} \label{eq.introch4o1}
\text{trace}{J(x)}\leq 0 \text{ and }  |f(x)|\geq \phi>0 \text{ for $|x|\geq x^\ast$}
\end{equation}
suffice. The second of these conditions is weakened in Hartman and Olech \cite{hartolech:1962} to
\begin{equation} \label{eq.introch4ho1}
|x||f(x)|>K \text{ for all $|x|\geq M$, or } \int_0^\infty \inf_{\|x\|=\rho} |f(x)|\,d\rho=+\infty
\end{equation}
and the first of Olech's assumptions is modified to
\begin{equation} \label{eq.introch4ho2}
\alpha(x)\leq 0, \text{ where } \alpha(x)=\max_{1\leq i< j\leq d}\{\lambda_i(x)+\lambda_j(x)\}
\end{equation}
and the $\lambda(x)$'s are eigenvalues of $H(x)$. The local asymptotic stability of the equilibrium is also assumed. In the 1970's Brock and Scheinkman~\cite{brockschein:1976} demonstrated that some of Olech and Hartman's conditions can be deduced from Liapunov considerations. In particular, they show that some of the conditions used in \cite{hart:1961} imply the dissipative condition. This is of particular interest to us, as our approach to understanding the stability and boundedness of solutions may be considered a Liapunov--like approach. A more recent paper of Gasull, LLibre and Sotomayor~\cite{gasullllibresoto:1991} considers the relationships between these conditions and global stability. As the paper develops, the relationship between
these existing conditions and the conditions we will need are drawn out.

\section{Proofs} 
\subsection{Proof of Proposition~\ref{prop.exist}}
Since $\sigma$ is continuous, there is a unique continuous adapted process which obeys
\[
dY(t)=-Y(t)\,dt + \sigma(t)\,dB(t), \quad t\geq 0; \quad Y(0)=0.
\]
Suppose that the a.s. event on which such a continuous adapted process is defined is $\Omega_Y$.
Consider now for $\omega\in \Omega_Y$ the parameterised random differential equation
\[
z'(t,\omega)=-f(z(t,\omega)+Y(t,\omega))+Y(t,\omega), \quad t>0; \quad z(0)=\xi.
\]
Since $f$ is continuous and the sample paths of $Y$ are continuous, there is a (local) solution $z(\cdot,\omega)$
for each $\omega\in \Omega_Y$ up to a time $\tau(\omega)\in (0,+\infty]$, where $\tau(\omega)=\inf\{t>0:z(t,\omega)\not\in(-\infty,\infty)\}$. Since $Y$ is adapted to the filtration generated by the standard Brownian motion $B$, and indeed is a functional of $B$, $z$ is also adapted to the filtration generated by $B$ and is a functional of $B$. Consider now for $\omega\in \Omega_Y$ the process $X$ defined by
\[
X(t,\omega)=z(t,\omega)+Y(t,\omega), \quad t\in [0,\tau(\omega)).
\]
The interval on which $X(\omega)$ is defined is the same as $z(\omega)$ because $Y(t,\omega)$ is finite for all finite $t$.
Moreover, it can be seen that $X(t)$ is a functional of $\{B(s):0\leq s\leq t\}$ because $z(t)$ and $Y(t)$ are. Let's define for $n\geq \lceil\|\xi\|\rceil=:n^\ast$ the time $\tau_n(\omega)=\inf\{t>0:\|X(t,\omega)\|=n\}$. Then on $\Omega_Y$, we see that $(\tau_n)_{n\geq n^\ast}$ is a sequence of stopping times adapted to the filtration generated by $B$. Moreover,
we have that $\tau_n$ is an increasing sequence with $\lim_{n\to\infty} \tau_n=\tau_\infty$, and $\tau_\infty(\omega)=\tau(\omega)$. Then for $t\geq 0$ we have
\begin{align*}
X(t\wedge \tau_n)&=z(t\wedge\tau_n)+Y(t\wedge \tau_n)\\
&=\xi-\int_0^{t\wedge \tau_n} f(z(s)+Y(s))\,ds + \int_0^{t\wedge \tau_n} Y(s)\,ds
-\int_0^{t\wedge \tau_n} Y(s)\,ds \\
&\qquad+ \int_0^{t\wedge \tau_n} \sigma(s)\,dB(s)\\
&=X(0)-\int_0^{t\wedge \tau_n} f(X(s))\,ds + \int_0^{t\wedge \tau_n}\sigma(s)\,dB(s)
\end{align*}
a.s. on $\Omega_Y$. Therefore, we have that $X$ is a solution to \eqref{eq.mainstocheqn}.

To demonstrate that any such solution is global, we proceed by a standard proof by contradiction.
Define $n^\ast\in\mathbb{N}$ such that $n^\ast>\|\xi\|$. Define for each $n\geq n^\ast$ the stopping time
$\tau_n^\xi=\inf\{t>0:\|X(t)\|_2=n\}$. We see that
$\tau_n^\xi$ is an increasing sequence of times and so
$\tau_\infty^\xi:=\lim_{n\to\infty} \tau_n^\xi$. Suppose, in
contradiction to the desired claim, that $\tau_\infty^\xi<+\infty$ with positive probability for some $\xi\in\mathbb{R}^d$.
Then, there exists $T>0$, $\epsilon>0$ and $n_0\in\mathbb{N}$ such that
\[
\mathbb{P}[\tau_n^\xi\leq T] \geq \epsilon, \quad n\geq
n_0>n^\ast.
\]
Consider now the non--negative semimartingale $\|X(t)\|^2_2$. Then by It\^o's rule
we have
\begin{multline}   \label{eq.Xsqstop}
\|X(t\wedge\tau_n^\xi)\|^2=\|\xi\|^2_2 - \int_0^{t\wedge \tau_n^\xi} \langle f(X(s)),X(s)\rangle\,ds \\
+ \int_0^{t\wedge \tau_n^\xi} \|\sigma(s)\|^2_F\,ds + M(t), \quad t\in [0,T],
\end{multline}
where
\[
M(t)=\sum_{j=1}^r \int_0^{t\wedge \tau_n^\xi} \left\{\sum_{i=1}^d 2X_i(s)\sigma_{ij}(s)\right\}\,dB_j(s), \quad t\in [0,T].
\]
By the optional sampling theorem, we have that $\mathbb{E}[M(t)]=0$, and so by \eqref{eq.fglobalunperturbed} we have that
\[
\mathbb{E}[
\|X(T\wedge\tau_n^\xi)\|_2^2]\leq \|\xi\|^2_2 + \int_0^T \|\sigma(s)\|^2_F\,ds =:K(T,\xi)<+\infty.
\]
Define next the event $C_n=\{\tau_n^\xi\leq T\}$. Then for $n\geq
n_0$ we have $\mathbb{P}[C_n]\geq \epsilon$. If $\omega\in C_n$, we
have that $\tau_n^\xi\leq T$, so $\|X(T\wedge \tau_n^\xi)\|_2=n$. Hence
$\|X(T\wedge \tau_n^\xi)\|^2_2=n^2$ for $\omega\in C_n$.
Hence
\begin{align*}
K(T,\xi)&\geq \mathbb{E}\left[\|X(T\wedge \tau_n^\xi)\|^2_2\right] \geq \mathbb{E}\left[\|X(T\wedge \tau_n^\xi)\|^2_2 I_{C_n}\right] =n^2 \mathbb{P}[C_n]\geq n^2 \epsilon.
\end{align*}
Therefore, we have that $K(T,\xi)\geq n^2 \epsilon$ for all
$n\geq n_0$. Letting $n\to\infty$ gives a contradiction.

\subsection{Proof of Theorem~\ref{th.sigL2}}
By It\^o's rule, and by virtue of the fact that $\|X(t)\|^2_2$ is finite for all $t\geq 0$ a.s., we may remove the stopping times 
in \eqref{eq.Xsqstop} above, and can write 
\begin{multline} \label{eq.Xsq}
\|X(t)\|^2_2=\|\xi\|^2_2 - \int_0^t 2\langle X(s),f(X(s))\rangle\,ds+\int_0^t \|\sigma(s)\|^2_F\,ds +M(t),
\quad t\geq 0,
\end{multline}
where we define $M$ to be the local martingale given by
\begin{equation} \label{def.Mmart}
M(t)=\sum_{j=1}^r \int_0^t \sum_{i=1}^d
2X_i(s)\sigma_{ij}(s)\,dB_j(s), \quad t\geq 0,
\end{equation}
and let
\[
U(t)=\int_0^t2\langle X(s),f(X(s))\rangle ds,\quad
A(t)=\int_0^t\|\sigma(s)\|^2_F\,ds,\quad t\geq 0.
\]
Since $\langle x,f(x)\rangle \geq 0$ for all $x\in\mathbb{R}^d$ and
$\sigma\in L^2([0,\infty);\mathbb{R}^{d\times r})$, it follows that
$A$ and $U$ are continuous adapted increasing processes. Therefore
by Lemma~\ref{theorem.convergence}, it follows that
\[
\lim_{t\to\infty}\|X(t)\|^2=L\in[0,\infty),\quad \text{a.s.}
\]
and that
\[
\lim_{t\to\infty}\int_0^t\langle X(s),f(X(s))\rangle
ds=I\in[0,\infty),\quad \text{a.s.}
\]
By continuity this means that there is an a.s. event $A=\{\omega:
\|X(t,\omega)\|\to \sqrt{L(\omega)}\in [0,\infty) \text{ as
$t\to\infty$}\}$. We write $A=A_+\cup A_0$ where
\begin{equation*}
A_+=\{\omega: \|X(t,\omega)\|\to \sqrt{L(\omega)}\in
(0,\infty)\text{ as $t\to\infty$}\},
\end{equation*}
and $A_0=\{\omega: X(t,\omega)\to 0 \text{ as $t\to\infty$}\}$.
Suppose that $\omega\in A_+$. Define
\[
F(x)=\langle x,f(x)\rangle, \quad x\in \mathbb{R}^d.
\]
By \eqref{eq.fglobalunperturbed}, we have that $F(x)=0$ if and only
if $x=0$. Define for any $r\geq 0$
\[
\inf_{\|x\|=r} F(x)=:\phi(r)\geq 0.
\]
Since $f$ is continuous and $F$ is continuous,  $\phi$ is
continuous. Hence $\min_{|x|=r} F(x)=\phi(r)$. Suppose there is
$r>0$ such that $\phi(r)=0$. Then there exists $x$ with $|x|=r$ such
that $F(x)=\phi(r)=0$. But this implies that $x=0$, a contradiction.
Moreover $\phi$ is continuous and positive definite. Hence for
$\omega\in A_+$ we have
\[
\liminf_{t\to\infty} \langle X(t,\omega),f(X(t,\omega))\geq
\phi(\sqrt{L(\omega)})>0.
\]
Therefore
\begin{equation} \label{eq.intxfxavemult}
\liminf_{t\to\infty} \frac{1}{t}\int_0^t \langle
X(s,\omega),f(X(s,\omega))\rangle\,ds \geq \phi(\sqrt{L(\omega)})>0.
\end{equation}
Since the last two terms on the righthand side of \eqref{eq.Xsq}
have finite limits as $t\to\infty$, \eqref{eq.intxfxavemult} implies
that for $\omega\in A_+$ that
\[
0\leq \lim_{t\to\infty}
\frac{\|X(t,\omega)\|^2}{t}=-2\phi(\sqrt{L(\omega)})<0,
\]
a contradiction. Therefore $\mathbb{P}[A_+]=0$. Since
$\mathbb{P}[A]=1$, we must have $\mathbb{P}[A_0]=1$, as required.

\subsection{Proof of Theorem~\ref{theorem.Xlowerboundunbound}}
Define
\begin{gather} \label{def.OmX}
\Omega_X=\bigl\{\omega\in \Omega: \text{there is a unique continuous adapted process $X$} \\
\text{for which the realisation $X(\cdot,\omega)$ obeys \eqref{eq.mainstocheqn}}\bigr\} \nonumber \\
\label{def.OmY}
\Omega_Y=\big\{\omega\in \Omega: \text{there is a unique continuous adapted process $Y$}\\
 \text{for which the realisation $Y(\cdot,\omega)$ obeys
 \eqref{def.Y}}\bigr\}. \nonumber
\end{gather}
Let \begin{equation} \label{def.Ome}
\Omega_e=\Omega_X\cap\Omega_Y.\end{equation} If $S_h'$ obeys
\eqref{eq.thetaunstableh}, it follows from
Theorem~\ref{theorem.Yclassify} that $\limsup_{t\to\infty}
\|Y(t)\|=+\infty$, a.s., and let the event on which this holds be
$\Omega_1\subseteq \Omega_Y$. Suppose that there is an event
$A=\{\omega:\limsup_{t\to\infty}  \|X(t,\omega)\|<+\infty\}$ for which
$\mathbb{P}[A]>0$. Define $A_1=A\cap \Omega_1\cap \Omega_e$ so that
$\mathbb{P}[A_1]>0$.

Next, rewrite \eqref{eq.mainstocheqn} as
\[
dX(t)=\left(-X(t)+[X(t)-f(X(t))]\right)\,dt + \sigma(t)\,dB(t),
\quad t\geq 0; \quad X(0)=\xi.
\]
Therefore on $\Omega_X$ we obtain
\[
X(t)=\xi e^{-t} + \int_0^t e^{-(t-s)}(X(s)-f(X(s)))\,ds +
e^{-t}\int_0^t e^{s}\sigma(s)\,dB(s).
\]
Since $Y$ obeys \eqref{eq.Yform}, for $\omega\in\Omega_e$ we have
\begin{equation} \label{eq.YX} Y(t,\omega)=X(t,\omega)-\xi e^{-t} -  \int_0^t
e^{-(t-s)}(X(s,\omega)-f(X(s,\omega)))\,ds, \quad t\geq 0.
\end{equation}
Define for $\omega\in A_1$
\begin{equation} \label{eq.limsupXmaxomega}
X^\ast(\omega):=\limsup_{t\to\infty} \|X(t,\omega)\|<+\infty,
\end{equation}
and define $\bar{f}(x)=\sup_{\|y\|\leq x} \|f(y)\|$ and
$\bar{F}(x)=2x+\bar{f}(x)$ for $x\geq 0$. Then for each $\omega\in
A_1$, it follows from \eqref{eq.YX} that
\[
\limsup_{t\to\infty}\|Y(t,\omega)\|\leq 2X^\ast(\omega) +
\bar{f}(X^\ast(\omega))=\bar{F}(X^\ast(\omega)),
\]
and as $\bar{F}(X^\ast(\omega))<+\infty$, a contradiction results.

To prove part (B), first note that $\bar{F}$ is continuous and
increasing on $[0,\infty)$ with $\bar{F}(0)=0$ and
$\lim_{x\to\infty} \bar{F}(x)=+\infty$. Therefore, for every $c>0$
there exists a unique $c'>0$ such that $\bar{F}(c)=c'$, or
$c'=\bar{F}^{-1}(c)$. Suppose that $S_h'$ obeys
\eqref{eq.thetaboundedh}, so that by Theorem~\ref{theorem.Yclassify}
there is a $c_1>0$ such that $\limsup_{t\to\infty} \|Y(t)\|\geq c_1$
a.s. Let the event on which this holds be $\Omega_2$. Suppose now
that the event $A_2$ defined by
\[
A_2=\{\omega\in \Omega_X: \limsup_{t\to\infty}
\|X(t,\omega)\|<\bar{F}^{-1}(c_1)\},
\]
and suppose that $\mathbb{P}[A_2]>0$. Define $A_3=A_2\cap
\Omega_e\cap \Omega_2$. Then $\mathbb{P}[A_3]>0$. For $\omega\in
A_3$, $X^\ast(\omega)$ as given by \eqref{eq.limsupXmaxomega} is
well--defined and finite, and in fact $X^\ast(\omega)<
\bar{F}^{-1}(c_1)$. As before, from \eqref{eq.YX}, we deduce that
$\limsup_{t\to\infty}\|Y(t,\omega)\|\leq \bar{F}(X^\ast(\omega))$.
But then we have $c_1\leq \bar{F}(X^\ast(\omega))$, which implies
$\bar{F}^{-1}(c_1)\leq X^\ast(\omega)<\bar{F}^{-1}(c_1)$, a
contradiction. Thus we have that $\mathbb{P}[A_2]=0$, so
$\limsup_{t\to\infty} \|X(t)\|\geq \bar{F}^{-1}(c_1)=:c_3>0$ a.s.,
as required.
\subsection{Proof of Theorem~\ref{th.xto0yto0}}
In this proof, we implicitly consider the case where $\sigma\not \in
L^2([0,\infty);\mathbb{R}^{d\times r})$, as Theorem~\ref{th.sigL2}
shows that the result holds in the case where $\sigma\in
L^2([0,\infty);\mathbb{R}^{d\times r})$, with each of the events
$\{\omega:\lim_{t\to\infty} Y(t,\omega)=0\}$ and
$\{\omega:\lim_{t\to\infty} X(t,\omega)=0\}$ being a.s.


We prove that $X(t)\to 0$ as $t\to\infty$ implies $Y(t)\to 0$ as
$t\to\infty$ i.e., \eqref{eq.Y0impX0}.
Since $f$ obeys \eqref{eq.fglobalunperturbed} it follows from
\eqref{eq.YX} that for each $\omega\in \{\omega:X(t,\omega)\to 0
\text{ as $t\to\infty$}\}\cap \Omega_e$ that $Y(t,\omega)\to 0$ as
$t\to\infty$, proving \eqref{eq.Y0impX0}.

We now prove that $Y(t)\to 0$ as $t\to\infty$ implies $X(t)\to 0$ as
$t\to\infty$ or $\|X(t)\|\to \infty$ as $t\to\infty$, i.e.
 \eqref{eq.X0impY0}.

Define $\Omega_2=\{\omega:\lim_{t\to\infty}
Y(t,\omega)=0\}\cap\Omega_Y$ and
\begin{gather*}
A_0=\{\omega: \liminf_{t\to\infty} \|X(t,\omega)\|=0\}, \\
A_+= \{\omega: \liminf_{t\to\infty} \|X(t,\omega)\|\in (0,\infty)\}, \\
A_\infty =\{\omega: \liminf_{t\to\infty} \|X(t,\omega)\|=\infty\}.
\end{gather*}
Also define
\begin{gather*}
\Omega_0=\Omega_2\cap\Omega_X\cap A_0=\Omega_2\cap\Omega_e\cap A_0, \\
\Omega_+=\Omega_2\cap\Omega_X\cap A_+=\Omega_2\cap \Omega_e\cap A_+, \\
\Omega_\infty=\Omega_2\cap\Omega_X\cap A_\infty=\Omega_2\cap
\Omega_e\cap A_\infty.
\end{gather*}
Finally define $A_1=\{\omega: \lim_{t\to\infty} X(t,\omega)=0\}$ and
$\Omega_1=\Omega_2\cap \Omega_X\cap A_1$. Clearly $A_1\subseteq A_0$
and $\Omega_1\subseteq \Omega_0$.

Define for each $\omega\in \Omega_e$ the realisation
$z(\cdot,\omega)$ by $z(t,\omega)=X(t,\omega)-Y(t,\omega)$ for
$t\geq 0$. Then $z(\cdot,\omega)$ is in $C^1(0,\infty)$ and obeys
\begin{equation*}
z'(t,\omega)=-f(X(t,\omega))+Y(t,\omega)=-f(z(t,\omega)+Y(t,\omega))+Y(t,\omega),
\, t\geq 0; \, z(0)=\xi.
\end{equation*}
Let $\omega\in \Omega_0\cup\Omega_+$. Then $\liminf_{t\to\infty}
\|X(t,\omega)\|<+\infty$. Define also
\[
g(t,\omega)=f(z(t,\omega))-f(z(t,\omega)+Y(t,\omega))+Y(t,\omega),
\quad t\geq 0.
\]
Since $z(\cdot,\omega)$ is in $C^1(0,\infty)$ we have
\begin{align*}
\frac{d}{dt}\|z(t,\omega)\|^2 &= 2\langle z(t,\omega),z'(t,\omega)\rangle \\
&=2\langle z(t,\omega), -f(z(t,\omega)) + f(z(t,\omega)) -f(z(t,\omega)+Y(t,\omega))+Y(t,\omega)\rangle \\
&=-2\langle z(t,\omega), f(z(t,\omega))\rangle + 2\langle
z(t,\omega), g(t,\omega)\rangle.
\end{align*}
Since $Y(t,\omega)\to 0$ as $t\to\infty$ and $\liminf_{t\to\infty}
\|X(t,\omega)\|=:L(\omega)<+\infty$, it follows that
\begin{align*}
\liminf_{t\to\infty} \|z(t,\omega)\|&\leq \liminf_{t\to\infty} \|X(t,\omega)\|+\|Y(t,\omega)\|\\
&=\liminf_{t\to\infty} \|X(t,\omega)\|+\lim_{t\to\infty}
\|Y(t,\omega)\|=L(\omega).
\end{align*}
Define $\lambda(\omega):=\liminf_{t\to\infty} \|z(t,\omega)\|$. Then
$\lambda(\omega)<+\infty$.
We remark that as $f$ is continuous, by the Heine--Cantor theorem it is uniformly continuous on compact sets. Therefore, for every fixed
$c>0$ we may define the a modulus of continuity $\omega'_c:[0,2c]\to \mathbb{R}^+$ for $f$ by
\[
\omega'_c(\delta):=\sup_{\|x\|\vee \|y\|\leq c, \|x-y\|= \delta} \|f(x)-f(y)\|.
\]
Define now $\omega_c(\delta):=\sup_{0\leq x\leq \delta} \omega_c'(x)$. Then $\omega_c$ is non--decreasing and we have
\[
\|f(x)-f(y)\|\leq \omega_c(\|x-y\|)\leq \omega_c(\delta) \text{ for all $\|x\|\vee\|y\|\leq c$ such that $\|x-y\|\leq \delta$}
\]
The uniform continuity of $f$ guarantees that $\omega_c(\delta)\to 0$ as $\delta\to 0$.

\textbf{STEP A:} We now show that $\liminf_{t\to\infty}
\|z(t,\omega)\|>0$ implies
\[
\limsup_{t\to\infty} \|z(t,\omega)\|<+\infty.
\]

\textbf{Proof of STEP A:} Suppose $\lambda(\omega)>0$ and
$\limsup_{t\to\infty} \|z(t,\omega)\|=+\infty$. Since $f$ is
continuous, and $\langle x,f(x)\rangle >0$ for $x\neq 0$, it follows
that there exists $F_\lambda>0$ such that
\[
F_\lambda:=\inf_{\|z\|=3\lambda/2} \langle z,f(z)\rangle.
\]
Also, using the modulus of continuity of $f$, we have that
\[
\|f(x)-f(y)\|\leq \omega_{3\lambda}(\|x-y\|), \quad \text{for all $\|x\|\vee
\|y\|\leq 3\lambda$}.
\]
Since $\omega_{3\lambda}(\delta)\to 0$ as $\delta\to 0$, we may choose $\epsilon>0$ so small that
\[
\epsilon<\frac{3\lambda(\omega)}{2}, \quad \epsilon+\omega_{3\lambda}(\epsilon)< \frac{2F_{\lambda(\omega)}}{3\lambda}.
\]
Since $Y(t,\omega)\to 0$ as $t\to\infty$, there exists
$T_1(\epsilon,\omega)>0$ such that $\|Y(t,\omega)\|<\epsilon$ for
all $t>T_1(\epsilon,\omega)$. Suppose that
\[
\limsup_{t\to\infty} \|z(t,\omega)\|=+\infty.
\]
Then there exists $T_2(\epsilon)>T_1(\epsilon)$ such that
$T_2(\epsilon)=\inf\{t>T_1(\epsilon)\,:\, \|z(t)\|=3\lambda/2\}$.
Define also
\[
T_3(\epsilon)=\inf\{t>T_2(\epsilon)\,:\, \|z(t)\|=5\lambda/4\},
\quad T_4(\epsilon)=\inf\{t>T_3(\epsilon)\,:\,
\|z(t)\|=3\lambda/2\}.
\]
Clearly with $w(t)=\|z(t,\omega)\|^2$, we have $w'(T_3,\omega)\leq
0$ and $w'(T_4,\omega)\geq 0$. Since $z(T_4)=3\lambda/2$ we have
$\langle z(T_4),f(z(T_4))\rangle \geq F_\lambda$. Also we have
$\|z(T_4)+Y(T_4)\|\leq \|z(T_4)\|+\|Y(T_4)\|\leq
3\lambda/2+\epsilon\leq 3\lambda$, so
\[
\|f(z(T_4)+Y(T_4))-f(z(T_4))\|\leq \omega_{3\lambda}(\|Y(T_4)\|)\leq \omega_{3\lambda}(\epsilon).
\]
Collecting these estimates yields
\begin{align*}
\lefteqn{w'(T_4)}\\
&=-2\langle z(T_4),f(z(T_4))\rangle + 2\langle z(T_4),g(T_4)\rangle \\
&=-2\langle z(T_4),f(z(T_4))\rangle + 2\langle z(T_4), f(z(T_4))-f(z(T_4)+Y(T_4))+Y(T_4)\rangle \\
&\leq -2F_\lambda + 2\cdot \frac{3\lambda}{2} \epsilon + 2\frac{3\lambda}{2}\|f(z(T_4))-f(z(T_4)+Y(T_4)))\| \\
&\leq -2F_\lambda +  3\lambda \epsilon + 3\lambda \omega_{3\lambda}(\epsilon) <0.
\end{align*}
Therefore we have a contradiction, because $w'(T_4)\geq 0$.

\textbf{STEP B:} Next we show that $\liminf_{t\to\infty}
\|z(t,\omega)\|=0$ implies
\[
\limsup_{t\to\infty} \|z(t,\omega)\|<+\infty.
\]

\textbf{Proof of STEP B:} Suppose to the contrary that
$\limsup_{t\to\infty} \|z(t,\omega)\|=+\infty$. Fix $\lambda>0$
arbitrarily. Proceeding exactly as in STEP A, we can demonstrate
that the supposition $\limsup_{t\to\infty} \|z(t,\omega)\|=\infty$
leads to a contradiction. Therefore we have shown that
$\liminf_{t\to\infty} \|z(t,\omega)\|\in [0,\infty)$ implies that
$\limsup_{t\to\infty} \|z(t,\omega)\|<+\infty$.

\textbf{STEP C:} Next we show that
\[
\liminf_{t\to\infty} \|X(t,\omega)\|<+\infty
\]
implies that $\liminf_{t\to\infty} \|z(t,\omega)\|=0$,
$\limsup_{t\to\infty} \|z(t,\omega)\|<+\infty$.

\textbf{Proof of STEP C:} First, we note that $\liminf_{t\to\infty}
\|X(t,\omega)\|<+\infty$ implies that $\liminf_{t\to\infty}
\|z(t,\omega)\|<+\infty$. By STEPs A and B, implies
$\limsup_{t\to\infty} \|z(t,\omega)\|<+\infty$. Define
\[
\limsup_{t\to\infty} \|z(t,\omega)\|=:\Lambda'(\omega)\in
[0,\infty).
\]
Suppose that $\liminf_{t\to\infty}
\|z(t,\omega)\|=\lambda(\omega)>0$. Then $\Lambda'\geq \lambda>0$.
By the continuity of $f$, the fact that $\Lambda'\geq \lambda>0$,
and the fact that $f$ obeys $\langle x,f(x)\rangle >0$ for all
$x\neq 0$, there exists an $F_{\lambda,\Lambda'}>0$ defined by
\[
F_{\lambda(\omega),\Lambda'(\omega)}:=\min_{\lambda(\omega)/2\leq
\|x\|\leq \Lambda'(\omega)+\lambda(\omega)/2} \langle x,f(x)\rangle.
\]
Suppose now that $\epsilon>0$ is so small that
\[
0<\epsilon< \frac{\lambda(\omega)}{2}, \quad \epsilon+\omega_{\Lambda'+\lambda}(\epsilon)<\frac{F_{\lambda(\omega),\Lambda'(\omega)}}{2(\Lambda'(\omega)+\lambda(\omega)/2)}.
\]
Then there exists $T_1(\epsilon,\omega)>0$ such that
$\|Y(t,\omega)\|<\epsilon$ for all $t>T_1(\epsilon,\omega)$. Also,
there exists $T_2(\omega)>0$ such that $\|z(t,\omega)\|\leq
\Lambda'(\omega)+\lambda(\omega)/2$ for all $t\geq T_2(\omega)$.
Now let $T_3(\epsilon,\omega)=1+T_1(\epsilon,\omega)\vee
T_2(\omega)$. Then for $t\geq T_3(\epsilon,\omega)$ we have
$\|z(t,\omega)+Y(t,\omega)\|\leq
\Lambda'(\omega)+\lambda(\omega)/2+\epsilon
<\Lambda'(\omega)+\lambda(\omega)$ 
and $\|z(t,\omega)\|\leq \Lambda'(\omega)+\lambda(\omega)$. Therefore for $t\geq
T_3(\epsilon,\omega)$ we have
\[
\|f(z(t,\omega)+Y(t,\omega))-f(z(t,\omega))\| \leq \omega_{\Lambda'+\lambda}(\|Y(t,\omega)\|)\leq \omega_{\Lambda'+\lambda}(\epsilon),
\]
and hence
\begin{align*}
\lefteqn{|\langle g(t,\omega),z(t,\omega) \rangle| }\\
&\leq \|z(t,\omega)\|\|f(z(t,\omega)+Y(t,\omega))-f(z(t,\omega))\| +|\langle z(t,\omega),Y(t,\omega)\rangle|\\
&\leq \omega_{\Lambda'+\lambda}(\epsilon) \|z(t,\omega)\| + \|z(t,\omega)\|\|Y(t,\omega)\| \\
&\leq (\omega_{\Lambda'+\lambda}(\epsilon) + \epsilon)(\Lambda'+\lambda/2)\\
&<F_{\lambda,\Lambda'}/2.
\end{align*}
Since $\liminf_{t\to\infty} \|z(t,\omega)\|=\lambda(\omega)>0$ there
exists $T_4(\omega)>0$ such that $\|z(t,\omega)\|>\lambda(\omega)/2$
for all $t\geq T_4(\omega)$. Define
$T_5(\epsilon,\omega)=1+T_4(\omega)\vee T_3(\epsilon,\omega)$. Then
for $t\geq T_5(\epsilon,\omega)$ we have
$0<\lambda(\omega)/2<\|z(t,\omega)\|\leq
\Lambda'(\omega)+\lambda(\omega)/2$, which implies that
\[
\langle z(t,\omega),f(z(t,\omega))\rangle \geq
F_{\lambda,\Lambda'}>0.
\]
Therefore for $t\geq T_5(\epsilon,\omega)$ we have
\begin{align*}
\frac{d}{dt}\|z(t,\omega)\|^2 &= -2\langle z(t,\omega),f(z(t,\omega))\rangle + 2\langle g(t,\omega),z(t,\omega) \rangle \\
&\leq -2\langle z(t,\omega),f(z(t,\omega))\rangle + F_{\lambda,\Lambda'} \\ 
&\leq - F_{\lambda,\Lambda'}.
\end{align*}
Therefore for $t\geq T_5(\epsilon,\omega)$ we have
\[
\|z(t,\omega)\|^2\leq \|z(T_5)\|^2 - F_{\lambda,\Lambda'}(t-T_5).
\]
Hence we have that $\|z(t,\omega)\|^2\to -\infty$ as $t\to\infty$,
which is a contradiction. Thus $\liminf_{t\to\infty}
\|z(t,\omega)\|=0$, as required.

\textbf{STEP D:} Suppose that
\[
\liminf_{t\to\infty} \|X(t,\omega)\|<+\infty.
\]
Then $\lim_{t\to\infty} X(t,\omega)=0$.

\textbf{Proof of STEP D:} By STEP C, $\liminf_{t\to\infty}
\|X(t,\omega)\|<+\infty$, this implies that $\liminf_{t\to\infty}
\|z(t,\omega)\|=0$ and $\limsup_{t\to\infty}
\|z(t,\omega)\|<+\infty$. If we can show that
\[
\lim_{t\to\infty} \|z(t,\omega)\|=0,
\]
we are done because $X(t,\omega)=z(t,\omega)+Y(t,\omega)$ and
$Y(t,\omega)\to 0$ as $t\to\infty$.
Let $\eta>0$. We next show that $\limsup_{t\to\infty}
\|z(t,\omega)\|\leq \eta$. Using the modulus of continuity of $f$ we have that
\[
\|f(x)-f(y)\|\leq \omega_{2\eta}(\|x-y\|)\leq \omega_{2\eta}(\delta) \text{ for all $\|x\|\vee\|y\|\leq 2\eta$, $\|x-y\|\leq \delta\leq 4\eta$}.
\]
There also exists $F_\eta>0$ such that
\[
F_\eta:=\min_{\|x\|=\eta} \langle x,f(x)\rangle.
\]
Let $\epsilon>0$ be so small that
\[
\epsilon<\frac{\eta}{2}, \quad \epsilon+\omega_{2\eta}(\epsilon)< \frac{F_\eta}{\eta}.
\]
Since $Y(t,\omega)\to 0$ as $t\to\infty$, there exists
$T_1(\epsilon,\omega)>0$ such that $\|Y(t,\omega)\|<\epsilon$ for
all $t>T_1(\epsilon)$. Suppose that $\limsup_{t\to\infty}
\|z(t,\omega)\|>\eta$. Since $\liminf_{t\to\infty}
\|z(t,\omega)\|=0$, we may therefore define
\begin{align*}
T_2(\epsilon,\omega)=\inf\{t>T_1(\epsilon,\omega):\|z(t,\omega)\|=\eta/2\}, \\
T_3(\epsilon,\omega)=\inf\{t>T_2(\epsilon,\omega):\|z(t,\omega)\|=\eta\}.
\end{align*}
Therefore, with $w(t)=\|z(t,\omega)\|^2$ we have that
$w'(T_3(\epsilon,\omega))\geq 0$. Furthermore, for $t\in
[T_2(\epsilon,\omega),T_3(\epsilon,\omega)]$ we have
$\|z(t,\omega)\|\leq \eta$ and $\|z(t,\omega)+Y(t,\omega)\|\leq
\eta+\epsilon<2\eta$ so
\begin{align*}
\|g(t,\omega)\|&\leq
\|f(z(t,\omega))-f(z(t,\omega)+Y(t,\omega))\|+\|Y(t,\omega)\|\\
&\leq \omega_{2\eta}(\|Y(t,\omega)\|)+\epsilon \leq \omega_{2\eta}(\epsilon)+\epsilon.
\end{align*}
Thus as  $\|z(T_3)\|=\eta$, we have
\[
|\langle  z(T_3),g(T_3)\rangle|\leq
\|z(T_3)\|\|g(T_3)\|=\eta\|g(T_3)\| \leq \eta (\omega_{2\eta}(\epsilon)+\epsilon)<F_\eta. 
\]
Since $\|z(T_3)\|=\eta$, we have $\langle z(T_3),f(z(T_3))\geq
F_\eta$ so therefore we have the estimate
\begin{equation*}
w'(T_3(\epsilon,\omega))=-2\langle z(T_3),f(z(T_3))\rangle + 2\langle z(T_3), g(T_3)\rangle
\leq -F_\eta<0,
\end{equation*}
a contradiction. Hence $T_3(\epsilon,\omega)$ does not exist for any
$\omega\in \Omega_0\cup \Omega_+$. Therefore we have
$\limsup_{t\to\infty} \|z(t,\omega)\|\leq \eta$. Since $\eta>0$ is
arbitrary, we make take the limit as $\eta\downarrow 0$ to obtain
$\limsup_{t\to\infty} \|z(t,\omega)\|=0$. Since $X=Y+z$, and
$Y(t,\omega)\to 0$ as $t\to\infty$, we have that $X(t,\omega)\to 0$
as $t\to\infty$.

\subsection{Proof of Theorem~\ref{theorem.Xiffsigma}}
Let $Y$ be the solution of \eqref{def.Y}. We prove first that
\eqref{eq.thetastableh} implies \eqref{eq.stochglobalstable}. First,
from Theorem~\ref{theorem.Yclassify}, we have that
\eqref{eq.thetastableh} implies $Y(t)\to 0$ as $t\to\infty$ a.s.
Moreover, if \eqref{eq.thetastableh} holds it follows that
\[
\sum_{n=0}^\infty \sqrt{\int_{nh}^{(n+1)h} \|\sigma(s)\|^2_F\,ds}  \exp\left(-\frac{\epsilon^2}{2}\frac{1}{\int_{nh}^{(n+1)h} \|\sigma(s)\|^2_F\,ds}\right)                    <+\infty \quad
\text{for each $\epsilon>0$}
\]
Therefore it follows that the summand tends to zero as $n\to\infty$, and so
\[
\lim_{n\to\infty} \int_{nh}^{(n+1)h} \|\sigma(s)\|^2_F\,ds=0.
\]
%
For every
$t> 0$ there is $n\in\mathbb{N}_0$ such that $t\in [nh,(n+1)h]$. Now
\begin{align*}
 \frac{1}{t}\int_0^t \|\sigma(s)\|^2_F\,ds &\leq \frac{1}{nh}\int_0^{(n+1)h} \|\sigma(s)\|^2_F\,ds 
 =
 \frac{1}{h}\cdot \frac{1}{n}\sum_{l=0}^n \int_{lh}^{(l+1)h} \|\sigma(s)\|^2_F\,ds.
 \end{align*}
 Since the summand tends to zero as $l\to\infty$, we have 
\begin{equation} \label{eq.cesarosigma0}
\lim_{t\to\infty} \frac{1}{t}\int_0^t \|\sigma(s)\|^2_F\,ds=0.
\end{equation}

Define the event $A=\{\omega: \|X(t,\omega)\|\to \infty \text{ as
$t\to\infty$}\}$. We prove that $\mathbb{P}[A]=0$. Suppose to the
contrary that $\mathbb{P}[A]>0$. Define
$\Omega_3=\Omega_2\cap\Omega_X\cap A$. Then by assumption
$\mathbb{P}[\Omega_3]>0$. By \eqref{eq.Xsq} we have
\begin{equation} \label{eq.Xforboundedtoo}
\|X(t)\|^2=\|\xi\|^2 - \int_0^t 2\langle
X(s),f(X(s))\rangle\,ds+\int_0^t \|\sigma(s)\|^2_F\,ds + M(t),
\quad t\geq 0.
\end{equation}
where $M$  is the local (scalar) martingale given by
\begin{equation}\label{eq.Mforboundedtoo}
M(t)=2\sum_{j=1}^r \int_0^t \sum_{i=1}^d
X_i(s)\sigma_{ij}(s)\,dB_j(s), \quad t\geq 0.
\end{equation}
Since $f$ obeys \eqref{eq.fasy}, i.e.,
\begin{equation*}
\liminf_{r\to\infty} \inf_{\|x\|=r} \langle x,f(x)\rangle=:\lambda>0,
\end{equation*}
for $\omega\in \Omega_3$ we have that
\[
\liminf_{s\to\infty} \langle X(s,\omega),f(X(s,\omega))\geq \lambda,
\]
so
\[
\liminf_{t\to\infty} \frac{2}{t}\int_0^t \langle
X(s,\omega),f(X(s,\omega))\rangle\,ds \geq 2\lambda,
\]
so for each $\epsilon<\lambda/3$, there exists
$T_1(\epsilon,\omega)>0$ such that
\[
\frac{2}{t}\int_0^t \langle X(s,\omega),f(X(s,\omega))\rangle\,ds
\geq 2\lambda-\epsilon, \quad t\geq T_1(\epsilon,\omega).
\]
By \eqref{eq.cesarosigma0}, for every $\epsilon>0$ there is
$T_2(\epsilon)>0$ such that
\[
\frac{\|\xi\|^2}{t}<\epsilon, \quad \frac{1}{t}\int_0^t
\|\sigma(s)\|^2_F\,ds<\epsilon, \quad t>T_2(\epsilon).
\]
Let $T(\epsilon,\omega)=1+T_1(\epsilon,\omega)\vee T_2(\epsilon)$.

Suppose there is a subevent $A'$ of $A$ with $\mathbb{P}[A']>0$ such
that $\langle M\rangle (t,\omega)\to\infty$ as $t\to\infty$ for each
$\omega\in A'$. Then $\liminf_{t\to\infty} M(t,\omega)=-\infty$ and
$\limsup_{t\to\infty} M(t,\omega)=+\infty$ for each $\omega\in A'$.
Then by the continuity of $M$ there exists
$\tau(\omega)>T(\epsilon,\omega)$ such that $M(\tau(\omega))=0$. Let
$t\geq T(\epsilon,\omega)$. Then
\begin{align*}
\frac{\|X(t,\omega)\|^2}{t}
&=\frac{\|\xi\|^2}{t} - 2\frac{1}{t}\int_0^t \langle X(s,\omega),f(X(s,\omega))\rangle\,ds+\frac{\int_0^t \|\sigma(s)\|^2_F\,ds}{t} + \frac{M(t,\omega)}{t}\\
&\leq \epsilon -2\lambda+\epsilon + \epsilon + \frac{M(t,\omega)}{t}\\
&=-2\lambda+3\epsilon+\frac{M(t,\omega)}{t}<-\lambda+\frac{M(t,\omega)}{t}.
\end{align*}
Hence
\[
0\leq \frac{\|X(\tau(\omega))\|^2}{\tau(\omega)} <
-\lambda+\frac{M(\tau(\omega))}{\tau(\omega)}= -\lambda<0,
\]
a contradiction. Therefore we have that $\lim_{t\to\infty} \langle
M\rangle (t)<+\infty$ a.s. on $A$. Hence $M(t)$ tends to a limit as
$t\to\infty$ a.s. on $A$ and so $M(t)/t\to 0$ as $t\to\infty$ a.s.
on $A$. Therefore,
\begin{align*}
\lefteqn{\limsup_{t\to\infty}
\frac{\|X(t,\omega)\|^2}{t}}\\
&=\limsup_{t\to\infty} \frac{\|\xi\|^2}{t} - \frac{2}{t}\int_0^t \langle X(s,\omega),f(X(s,\omega))\rangle\,ds+\frac{1}{t}\int_0^t \|\sigma(s)\|^2_F\,ds + \frac{M(t,\omega)}{t}\\
&=\limsup_{t\to\infty}  - 2\frac{1}{t}\int_0^t \langle X(s,\omega),f(X(s,\omega))\rangle\,ds\\
&=-2\liminf_{t\to\infty}  \frac{1}{t}\int_0^t \langle
X(s,\omega),f(X(s,\omega))\rangle\,ds \leq -2\lambda<0,
\end{align*}
a contradiction. Therefore, we must have $\mathbb{P}[A]=0$. Thus by
Theorem~\ref{th.xto0yto0}, it follows that $X(t)\to 0$ as
$t\to\infty$ a.s. We have shown that statement (A) and (C) are
equivalent.

Statement (C) implies statement (B). It remains to show that
statement (B) implies statement (A). By Theorem~\ref{th.xto0yto0},
it follows that $\mathbb{P}[Y(t)\to 0 \text{ as $t\to\infty$}]>0$.
Therefore by Theorem~\ref{theorem.Yclassify} it follows that
\eqref{eq.thetastableh} (or statement (A)) holds. Thus (C) implies
(B) implies (A).

\section{Proof of Theorem~\ref{theorem.Xclassify}}
We start by noticing that parts (A) and (C) of the theorem have
already been proven; part (A) is a consequence of
Theorem~\ref{theorem.Xiffsigma}, while part (C) is part (A) of
Theorem~\ref{theorem.Xlowerboundunbound}. The lower bound in part
(B) is a result of part (B) from
Theorem~\ref{theorem.Xlowerboundunbound}.

Therefore, it remains to establish the upper bound in part (B).
However, the proof of this result is technical, and relies on a
number of subsidiary results. The main step is a comparison theorem,
in which $\|X\|$ is bounded by the above by the positive solution of
$Z$ of a scalar stochastic differential equation. The solution of the scalar stochastic differential 
equation is then shown to be bounded by pathwise methods.

\subsection{Auxiliary functions and processes}
We start by introducing some functions and processes and deducing
some of their important properties. Let $\phi:[0,\infty)\to
\mathbb{R}$ be defined by
\begin{equation} \label{def.phimult}
\phi(x)=\inf_{\|y\|=x} \frac{\langle y,f(y)\rangle}{\|y\|}, \quad x>0;
\qquad \phi(0)=0.
\end{equation}
Since $f$ obeys \eqref{eq.fglobalunperturbed} it follows that
$\phi:[0,\infty)\to[0,\infty)$. Notice that $f$ being continuous ensures that $\phi\in C([0,\infty);[0,\infty))$. 
We now define 
\begin{equation} \label{def.phi0}
\phi_0(x)=\inf_{x/2\leq y\leq 4x} \phi(y), \quad x\geq 0, 
\end{equation}
and $\phi_1:[0,\infty)\to\mathbb{R}$ by $\phi_1(0)=0$ and 
\begin{equation} \label{def.phi1}
\phi_1(x)=\frac{1}{x}\int_x^{2x} (v\wedge 1) \phi_0(v)\,dv, \quad x>0.
\end{equation}
The motivation behind the construction of the function $\phi_1$ is to produce a \emph{Lipschitz} continuous function 
which shares the properties listed in \eqref{eq.phi1pos} with $\phi$, but bounds $\phi$ below. The Lipschitz continuity is important, because it ensures that a certain stochastic differential equation will have a unique solution; the fact that it bounds $\phi$ below
means that we will be able to prove, via a comparison approach, that the solution of the stochastic differential equation dominates $\|X\|$. 
\begin{lemma} \label{lemma.phi1loclip}
Suppose that $f$ obeys \eqref{eq.fglobalunperturbed} and \eqref{eq.fasybounded}. Then $\phi_1$ defined by \eqref{def.phi1} 
is locally Lipschitz continuous on $[0,\infty)$, 
\begin{equation}  \label{eq.phi1ltphi}
\phi_1(x)\leq \phi(x), \quad x\geq 0,
\end{equation}
and also obeys
\begin{equation}  \label{eq.phi1pos}
\phi_1(x)>0 \quad \text{ for $x>0$}, \quad \phi_1(0)=0, \quad \lim_{x\to\infty} \phi_1(x)=+\infty.
\end{equation}
\end{lemma}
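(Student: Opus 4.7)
The plan is to verify each of the four claims about $\phi_1$ in turn, first establishing some continuity and positivity properties of the auxiliary functions $\phi$ and $\phi_0$.

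First I would observe that $\phi$ itself is continuous on $[0,\infty)$ and strictly positive on $(0,\infty)$: continuity of $\phi$ follows from continuity of $y\mapsto \langle y,f(y)\rangle/\|y\|$ and the fact that an infimum of a continuous function over the continuously moving compact set $\{\|y\|=x\}$ is continuous, while the dissipative condition \eqref{eq.fglobalunperturbed} gives $\phi(x)>0$ for $x>0$. The same reasoning applied to $\phi_0$ (infimum of $\phi$ over the continuously varying compact interval $[x/2,4x]$) shows $\phi_0\in C([0,\infty);[0,\infty))$; moreover, $\phi_0(v)>0$ for $v>0$ and, by \eqref{eq.fasybounded}, $\phi(y)\to\infty$ as $y\to\infty$, so that $\phi_0(v)\to\infty$ as $v\to\infty$.

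Next, for the inequality \eqref{eq.phi1ltphi}, the key observation is that for each $v\in[x,2x]$ we have $x\in[v/2,4v]$, so $\phi_0(v)\le\phi(x)$. Combined with $(v\wedge 1)\le 1$, this yields
\[
\phi_1(x)=\frac{1}{x}\int_x^{2x}(v\wedge 1)\phi_0(v)\,dv\le\frac{\phi(x)}{x}\int_x^{2x}1\,dv=\phi(x).
\]
The strict positivity $\phi_1(x)>0$ for $x>0$ is immediate from the positivity of the integrand on $(0,\infty)$, and $\phi_1(0)=0$ is the definition. For the divergence $\phi_1(x)\to\infty$, use $\phi_1(x)\ge \inf_{v\in[x,2x]}(v\wedge 1)\phi_0(v)$, and for $x\ge 1$ this infimum tends to $+\infty$ because $\phi_0(v)\to\infty$ and $v\wedge 1=1$. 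This establishes \eqref{eq.phi1pos}.

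For the local Lipschitz claim, I would write $\phi_1(x)=(F(2x)-F(x))/x$, where $F(x)=\int_0^x (v\wedge 1)\phi_0(v)\,dv$ is $C^1$ since the integrand is continuous. On any interval $[a,A]$ with $a>0$, $\phi_1$ is therefore $C^1$ and hence Lipschitz. The delicate point is Lipschitz continuity near $0$: I would estimate $g(x):=x\phi_1(x)=F(2x)-F(x)$ and its derivative $g'(x)=2(2x\wedge 1)\phi_0(2x)-(x\wedge 1)\phi_0(x)$, noting that for $x\le 1/2$, $g'(x)=4x\phi_0(2x)-x\phi_0(x)$, so $|g'(x)|\le 5xM$ with $M=\max_{[0,1]}\phi_0$. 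Since $\phi_1'(x)=g'(x)/x-\phi_1(x)/x$ and $\phi_1(x)/x\le 2M$ (from $|g(x)|\le 2x^2M$), $\phi_1'$ is bounded by $7M$ on $(0,1/2]$. Together with continuity of $\phi_1$ at $0$ (since $\phi_1(x)\le 2xM\to 0$), this gives Lipschitz continuity on $[0,1/2]$, and hence local Lipschitz continuity on $[0,\infty)$. The main obstacle is this endpoint behavior at zero, but the role of the factor $(v\wedge 1)$ in \eqref{def.phi1}—essentially forcing an extra power of $v$ near $v=0$—is precisely what makes the above bound on $\phi_1'$ near $0$ go through.
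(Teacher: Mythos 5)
Your proposal is correct and follows essentially the same route as the paper: continuity and positivity of $\phi_0$, the averaging bound for \eqref{eq.phi1ltphi} via $x\in[v/2,4v]$ for $v\in[x,2x]$, the infimum bound for the divergence, and local Lipschitz continuity by differentiating $\phi_1$ explicitly and using the factor $(v\wedge 1)$ to control the behaviour near $0$. The only (harmless) variation is at the origin: the paper proves $\phi_1\in C^1$ up to $0$ with $\phi_1'(0+)=0$ via l'H\^opital's rule, whereas you merely bound $\phi_1'$ on $(0,1/2]$ together with continuity of $\phi_1$ at $0$, which is slightly more elementary and equally sufficient for the Lipschitz claim.
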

\begin{proof}
Since $\phi$ is continuous, we see that $\phi_0$ defined in \eqref{def.phi0} is continuous on $[0,\infty)$. 
Moreover, $\phi_0(0)=0$ and $\phi_0(x)>0$ for all $x>0$. It is easy to see that $\phi_1(x)\geq 0$ for all $x\geq 0$. Also, if $\phi_1(x)=0$ for some $x>0$, 
it follows by the non--negativity and continuity of $\phi_0$ that $(v\wedge 1)\phi_0(v)=0$ for a.a. $v\in [x,2x]$. Therefore, it must follow that 
 $\phi_0(v)=0$ for a.a. $v\in [x,2x]$, which is false as $\phi_0(v)>0$ for all $v>0$. Therefore we have $\phi_1(x)>0$ for all $x>0$. 
 \eqref{eq.fasybounded} implies that $\phi(x)\to \infty$ as $x\to\infty$. Therefore it follows that $\phi_0(x)\to\infty$ as $x\to\infty$. Hence for $x>1$ 
 we have
 \[
 \phi_1(x)=\frac{1}{x}\int_x^{2x} \phi_0(v)\,dv,
 \]
 and so it follows that $\phi_1(x)\to\infty$ as $x\to\infty$. By definition, $\phi_1(0)=0$, so all the statements in \eqref{eq.phi1pos} have been verified.
 
Next we show that $\phi_1$ is continuously differentiable on $(0,\infty)$ 
and that $\phi_1'(0+)=0$. This will guarantee that $\phi_1$ is locally Lipschitz continuous. We start by considering the one--sided derivative at $0$. 
Let $x\in (0,1/2]$. Then by \eqref{def.phi1}, we have 
\[
0<\frac{\phi_1(x)}{x}=\frac{1}{x}\int_x^{2x} \frac{v}{x} \phi_0(v)\,dv
\leq 2\frac{1}{x}\int_x^{2x}\phi_0(v)\,dv.
\]
Since $\phi_0$ is continuous and $\phi_0(x)\to 0$ as $x\to 0^+$, we have that the right most member of the above inequality has an indeterminate form 
as $x\to 0$. The continuity of $\phi_0$ allows us to employ l'H\^opital's rule to obtain
\[
\lim_{x\to 0^+} \frac{1}{x}\int_x^{2x}\phi_0(v)\,dv = \lim_{x\to 0^+} \{2\phi_0(2x)-\phi_0(x)\}=0.
\] 
Therefore we have that $\phi_1(x)/x\to 0$ as $x\to 0^+$. Since $\phi_1(0)=0$, it follows that $\phi_1'(0+)=0$. 
For $x>0$, the continuity of $v\mapsto (v\wedge 1)\phi_0(v)$ ensures that $\phi_1'(x)$ is well defined and is given by 
\[
\phi_1'(x)=\frac{1}{x^2}\left(x  \{ 2((2x)\wedge 1) \phi_0(2x) - (x\wedge 1) \phi_0(x)  \} -\int_x^{2x} (v\wedge 1) \phi_0(v)\,dv \right).
\]
We notice also that $\phi_1'$ is continuous on $[0,\infty)$ by the continuity of $\phi_0$ and the fact that for $0<x\leq 1/2$ we have
\[
\phi_1'(x)
=4\phi_0(2x) -  \phi_0(x) -\frac{1}{x^2}\int_x^{2x} v \phi_0(v)\,dv,
\]
so $\lim_{x\to 0^+}\phi_1'(x)=0=\phi_1'(0+)$. 

It remains to prove \eqref{eq.phi1ltphi}. Since $v\wedge 1\leq 1$, by \eqref{def.phi0}, we have for $x>0$ that 
\[
\phi_1(x)= \frac{1}{x}\int_x^{2x} (v\wedge 1) \phi_0(v)\,dv \leq \frac{1}{x}\int_x^{2x} \inf_{v/2\leq y\leq 4v} \phi(y)\,dv.
\]
For $v\in [x,2x]$, it follows that $v/2\leq x$ and that $4v\geq 4x>2x$. Therefore $[v/2,4v]\supset [x,2x]$ for $v\in [x,2x]$. Hence
\[
\inf_{v/2\leq y\leq 4v} \phi(y) \leq \inf_{x\leq y\leq 2x} \phi(y),
\]
and so
\[
\phi_1(x)\leq \frac{1}{x}\int_x^{2x} \inf_{x\leq y\leq 2x} \phi(y)\,dv
\leq \inf_{x\leq y\leq 2x} \phi(y) \leq \phi(x),
\]
as required.
\end{proof}
In our next result, we show that if $\phi_1$ is defined by \eqref{def.phi1} 
the function $\phi_2$ defined by 
\begin{equation} \label{def.phi2}
\phi_2(x):=\sqrt{x}\phi_1(\sqrt{x}), \quad x\geq 0
\end{equation}
is also locally Lipschitz continuous on $[0,\infty)$. This function also plays a role in our comparison proof, and in order to 
apply a standard approach in that proof, we find it convenient that $\phi_2$ be locally Lipschitz continuous.
\begin{lemma} \label{lemma.phi2loclip}
Suppose that $\phi_1$ is locally Lipschitz continuous on $[0,\infty)$, $\phi_1(0)=0$ and $\phi_1(x)>0$ for all $x>0$. If $\phi_2$ is defined by 
\eqref{def.phi2}, then $\phi_2:[0,\infty)\to\mathbb{R}$ is locally Lipschitz continuous.
\end{lemma}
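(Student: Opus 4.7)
The plan is to reduce the problem to checking Lipschitz continuity on intervals of the form $[0,b]$, since away from the origin $\sqrt{\cdot}$ is smooth and the composition $\phi_2(x)=\sqrt{x}\,\phi_1(\sqrt{x})$ is manifestly locally Lipschitz. The only delicate issue is behaviour near $x=0$, where $\sqrt{\cdot}$ fails to be Lipschitz. The idea is that this bad behaviour is compensated by the prefactor of $\sqrt{x}$ multiplying $\phi_1(\sqrt{x})$ and by the boundary condition $\phi_1(0)=0$.

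Fix $b>0$ and let $L$ denote a Lipschitz constant for $\phi_1$ on $[0,\sqrt{b}]$, available by the hypothesised local Lipschitz continuity of $\phi_1$. Since $\phi_1(0)=0$, this Lipschitz bound immediately yields the linear estimate $|\phi_1(u)|\le L u$ for all $u\in[0,\sqrt{b}]$. For any $x,y\in[0,b]$, I would split
\[
\phi_2(x)-\phi_2(y)=\sqrt{x}\bigl[\phi_1(\sqrt{x})-\phi_1(\sqrt{y})\bigr]+\bigl[\sqrt{x}-\sqrt{y}\bigr]\phi_1(\sqrt{y}),
\]
and bound each summand separately. The first summand is at most $L\sqrt{x}\,|\sqrt{x}-\sqrt{y}|$ by the Lipschitz property, and the second is at most $L\sqrt{y}\,|\sqrt{x}-\sqrt{y}|$ by the linear bound on $\phi_1$.

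Adding these two estimates yields
\[
|\phi_2(x)-\phi_2(y)|\le L(\sqrt{x}+\sqrt{y})\,|\sqrt{x}-\sqrt{y}|=L\,|x-y|,
\]
where the final equality uses the identity $(\sqrt{x}+\sqrt{y})(\sqrt{x}-\sqrt{y})=x-y$, valid for $x,y\ge 0$. Thus $\phi_2$ is Lipschitz on $[0,b]$ with constant $L$. Since $b>0$ is arbitrary and any compact subinterval of $[0,\infty)$ is contained in some $[0,b]$, local Lipschitz continuity of $\phi_2$ on $[0,\infty)$ follows.

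The only real obstacle is the non-Lipschitz behaviour of $t\mapsto\sqrt{t}$ at the origin; the argument disposes of it by trading the unboundedness of $\tfrac{d}{dt}\sqrt{t}$ near $0$ against the vanishing of $\phi_1$ there, via the algebraic identity $(\sqrt{x}+\sqrt{y})|\sqrt{x}-\sqrt{y}|=|x-y|$. Note that the hypothesis $\phi_1(x)>0$ for $x>0$ plays no role in this proof; it is recorded in the lemma only because it is a standing property of the particular $\phi_1$ constructed in Lemma~\ref{lemma.phi1loclip} that will be needed in the subsequent comparison argument.
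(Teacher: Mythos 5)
Your proof is correct and follows essentially the same route as the paper: the same splitting $\phi_2(x)-\phi_2(y)=\sqrt{x}\bigl[\phi_1(\sqrt{x})-\phi_1(\sqrt{y})\bigr]+\phi_1(\sqrt{y})\bigl[\sqrt{x}-\sqrt{y}\bigr]$, the linear bound $|\phi_1(u)|\le Lu$ from $\phi_1(0)=0$, and the identity $(\sqrt{x}+\sqrt{y})(\sqrt{x}-\sqrt{y})=x-y$ to obtain the Lipschitz constant on $[0,b]$. Your closing remark that the positivity hypothesis on $\phi_1$ is not used here is also accurate.
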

\begin{proof}
Since $\phi_1$ is locally Lipschitz continuous, it follows that for every $n\in \mathbb{N}$ there exists $K_n>0$ such that
\[
|\phi_1(x)-\phi_1(y)|\leq K_n|y-x|, \text{ for all $x, y\in [0,n]$}.
\]
Since $\phi_1(0)=0$, we have that $|\phi_1(x)|\leq K_n x$ for all $x\in [0,n]$.
To prove that $\phi_2$ is locally Lipschitz continuous, suppose that
$x,y\in [0,n]$ and suppose without loss of generality that $0\leq
y\leq x\leq n$. Hence $0\leq \sqrt{y}\leq \sqrt{x}\leq \sqrt{n}$.
Write
\[
\phi_2(x)-\phi_2(y)
=\sqrt{x}(\phi(\sqrt{x})-\phi_1(\sqrt{y}))+\phi_1(\sqrt{y})(\sqrt{x}-\sqrt{y}),
\]
so because $\phi_1$ is non--negative and $\sqrt{x}\geq \sqrt{y}$ we
have
\[
|\phi_2(x)-\phi_2(y)| \leq
\sqrt{x}|\phi_1(\sqrt{x})-\phi_1(\sqrt{y})|+\phi(\sqrt{y})(\sqrt{x}-\sqrt{y}).
\]
Therefore, using the Lipschitz continuity of $\phi_1$ and the estimate
$|\phi(y)|\leq K_{\sqrt{n}}\sqrt{y}$ for all $y\leq n$ we have
\begin{align*}
|\phi_2(x)-\phi_2(y)|
&\leq \sqrt{x} K_{\sqrt{n}}|\sqrt{x}-\sqrt{y}|+K_{\sqrt{n}}\sqrt{y} (\sqrt{x}-\sqrt{y})\\
&=\sqrt{x} K_{\sqrt{n}}(\sqrt{x}-\sqrt{y})+K_{\sqrt{n}}\sqrt{y}
(\sqrt{x}-\sqrt{y}) =K_{\sqrt{n}}(x-y),
\end{align*}
so that $|\phi_2(x)-\phi_2(y)|\leq K_{\sqrt{n}}|x-y|$ for $0\leq
y\leq x\leq n$. Hence $\phi_2$ is locally Lipschitz continuous.
\end{proof}
Let $X$ be a continuous adapted process which obeys \eqref{eq.mainstocheqn}. 
Associated with this solution of \eqref{eq.mainstocheqn}, define the $r$ scalar processes $\bar{\sigma}_j:[0,\infty)\to
\mathbb{R}$ by
\begin{equation} \label{def.sigbarj}
\bar{\sigma}_j(t)= \left\{
\begin{array}{cc}
\sum_{i=1}^d \frac{\langle
X(t),\mathbf{e}_i\rangle}{\|X(t)\|}\sigma_{ij}(t), & X(t)\neq 0, \\
\frac{1}{\sqrt{d}}\sum_{i=1}^d |\sigma_{ij}(t)|, & X(t)=0.
\end{array}
\right.
\end{equation}
We define $\bar{\sigma}(t)\geq 0$ by
\begin{equation} \label{def.barsigma}
\bar{\sigma}^2(t):=\sum_{j=1}^r \bar{\sigma}_j^2(t), \quad t\geq 0.
\end{equation}
Hence $\bar{\sigma}_j$ for $j=1,\ldots,r$ and $\bar{\sigma}$ are
adapted  processes. Therefore using the Cauchy--Schwartz inequality
and \eqref{def.sigbarj} we get
\[
\bar{\sigma}_j^2(t)\leq \sum_{i=1}^d \sigma_{ij}^2(t), \quad t\geq
0,
\]
and so $\bar{\sigma}^2(t)\leq \|\sigma(t)\|^2_F$ for all $t\geq 0$.
Hence $\bar{\sigma}$ and $\bar{\sigma}_j$ for $j=1,\ldots,r$ are
bounded functions on any compact interval. Therefore, the scalar process
$\tilde{Y}_0$ given by
\[
\tilde{Y}_0(t)=\sum_{j=1}^r \int_0^t e^s \bar{\sigma}_j(s)\,dB_j(s),
\quad t\geq 0
\]
is well--defined and is moreover a continuous square integrable
martingale.
Therefore the process $Y_0$ defined by
\begin{equation}\label{def.Y0}
Y_0(t)=e^{-t}\tilde{Y}_0(t), \quad  t\geq 0
\end{equation}
is a continuous semimartingale and obeys
\begin{equation} \label{eq.Y0sde}
dY_0(t)=-Y_0(t)\,dt + \sum_{j=1}^r \bar{\sigma}_j(t)\,dB_j(t), \quad
t\geq 0.
\end{equation}
Next define $W(0)=1+\|\xi\|>0$ and
\begin{equation} \label{def.WcompSDE}
W'(t)=
-\phi_1(W(t)+Y_0(t))+\frac{\|\sigma(t)\|^2_F+e^{-t}}{W(t)+Y_0(t)} +
Y_0(t), \quad t\geq 0,
\end{equation}
where $\phi_1$ is defined by \eqref{def.phi1}. By Lemma~\ref{lemma.phi1loclip}, $\phi_1$ is locally Lipschitz continuous; also, $\|\sigma\|^2_F$ is
continuous and the paths of $Y_0$ are continuous, so there is a unique
continuous solution of \eqref{def.WcompSDE} on the interval
$[0,\tau)$ where
\begin{equation}  \label{def.tauZ}
\tau=\inf\{t>0: Z(t)\not\in (0,\infty)\}
\end{equation}
and 
\begin{equation} \label{def.Z}
Z(t)=W(t)+Y_0(t), \quad \text{for $t\in [0,\tau)$}. 
\end{equation}
We understand that when we speak of a \emph{unique} solution of \eqref{def.WcompSDE}, we mean that it is a unique 
solution corresponding to a \emph{given} solution $X$ of \eqref{eq.mainstocheqn}. Of course, as our continuity assumption on $f$ may be too weak to ensure 
that there is a unique solution  $X$ of \eqref{eq.mainstocheqn}, we do not expect there to be unique solutions of \eqref{def.WcompSDE}, but merely unique 
relative to a given solution $X$ of \eqref{eq.mainstocheqn}.

Therefore, as $W$ is the
unique continuous solution of \eqref{def.WcompSDE} on $[0,\tau)$ for a given $X$, it
follows that on $[0,\tau)$ that $Z$ defined in \eqref{def.Z} is the unique
solution of the stochastic differential equation
\begin{equation} \label{def.ZcompSDE}
dZ(t)=\left(-\phi_1(Z(t))+\frac{\|\sigma(t)\|^2_F+e^{-t}}{Z(t)}\right)\,dt
+ \sum_{j=1}^r \bar{\sigma}_j(t)\,dB_j(t),
\end{equation}
for a given $X$, with initial condition $Z(0)=\|\xi\|+1>0$. The adaptedness of $Y_0$
ensures that the process $W$ is adapted, and therefore so is $Z$.

The first step is to show that $\tau=+\infty$ a.s., which means that
$Z(t)$ is well--defined and strictly positive for all $t\geq 0$,
a.s. In the rest of this section, when we say that certain processes are ``unique'' solutions 
of certain stochastic differential equations, we mean that the process is unique given a specific 
solution $X$ of \eqref{eq.mainstocheqn}.  
\begin{lemma} \label{lemma.Zpos}
Suppose that $f$ obeys \eqref{eq.fglobalunperturbed}, 
and that $\sigma$ obeys \eqref{eq.sigmacns}. Let
$Z$ be the unique continuous adapted solution of
\eqref{def.ZcompSDE}. Then $\tau$ defined by \eqref{def.tauZ} is such that 
$\tau=+\infty$ a.s.
\end{lemma}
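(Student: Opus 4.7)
The plan is to write $\tau = \tau^0 \wedge \tau^\infty$ where $\tau^\infty := \lim_{N \to \infty} \inf\{t > 0 : Z(t) \geq N\}$ is the explosion time and $\tau^0 := \lim_{n \to \infty} \inf\{t > 0 : Z(t) \leq 1/n\}$ is the time at which $Z$ first reaches $0$, and to show separately that both are a.s.\ infinite. Throughout I use the two essential bounds already noted in the text: $\bar\sigma^2(t) \leq \|\sigma(t)\|_F^2$, and (by Lemma~\ref{lemma.phi1loclip}) that $\phi_1$ is locally Lipschitz on $[0,\infty)$ with $\phi_1(0)=0$, so that $z \mapsto \phi_1(z)/z$ is bounded on every interval $(0,N]$ by some finite constant $C_N$.

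For the absence of explosion, I apply It\^o's rule to $Z^2$ on $[0, t \wedge T_N^\infty]$ where $T_N^\infty := \inf\{t>0 : Z(t) \geq N\}$. Using $\phi_1 \geq 0$ and $\bar\sigma^2 \leq \|\sigma\|_F^2$, the drift of $Z^2$ is bounded above by $3\|\sigma(s)\|_F^2 + 2e^{-s}$, while the stochastic integral piece is a true martingale up to $T_N^\infty$. Taking expectations gives $\mathbb{E}[Z(t \wedge T_N^\infty)^2] \leq Z(0)^2 + 3\int_0^t \|\sigma(s)\|_F^2\,ds + 2$, a bound independent of $N$. Since $Z(T_N^\infty) = N$ on $\{T_N^\infty \leq t\}$, a Markov-type estimate yields $\mathbb{P}[T_N^\infty \leq t] \to 0$ as $N \to \infty$ for every fixed $t$, and monotonicity in $N$ then forces $\tau^\infty = \infty$ a.s.

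The main obstacle, and the step that requires the $e^{-t}/Z$ term in \eqref{def.WcompSDE}, is showing $\tau^0 = \infty$. I apply It\^o's rule to the Lyapunov function $V(z) = -\log z$ on $[0, t \wedge \tau^0_n \wedge T_N^\infty]$, where $\tau^0_n := \inf\{t > 0 : Z(t) \leq 1/n\}$. A direct computation gives
\[
d(-\log Z(t)) = \left[\frac{\phi_1(Z)}{Z} + \frac{\tfrac{1}{2}\bar\sigma^2(t) - \|\sigma(t)\|_F^2 - e^{-t}}{Z^2}\right]dt - \frac{1}{Z}\sum_{j=1}^r \bar\sigma_j(t)\,dB_j(t),
\]
and the bracketed coefficient of $1/Z^2$ is bounded above by $-e^{-t}/Z^2 \leq 0$ thanks to $\bar\sigma^2 \leq \|\sigma\|_F^2$. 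The stochastic integral is a genuine martingale up to $\tau^0_n \wedge T_N^\infty$ because $1/Z$ is bounded by $n$ and $\bar\sigma_j$ is bounded on $[0,t]$. Dropping the non-positive $1/Z^2$ term and taking expectations yields
\[
\mathbb{E}\bigl[-\log Z(t \wedge \tau^0_n \wedge T_N^\infty)\bigr] \leq -\log Z(0) + C_N t.
\]

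To turn this into a bound on $\mathbb{P}[\tau^0_n \leq t \wedge T_N^\infty]$, I split the expectation on the left according to whether $Z(t \wedge \tau^0_n \wedge T_N^\infty)$ equals $1/n$ or lies in $[1/n, N]$; using $-\log Z \geq -\log N$ on the complement event gives
\[
\log n \cdot \mathbb{P}[\tau^0_n \leq t \wedge T_N^\infty] - \log N \leq -\log Z(0) + C_N t,
\]
so $\mathbb{P}[\tau^0_n \leq t \wedge T_N^\infty] \to 0$ as $n \to \infty$ for each fixed $N$. Combined with $\{\tau^0_\infty \leq t\} \cap \{T_N^\infty > t\} \subseteq \{\tau^0_n \leq t \wedge T_N^\infty\}$ for every $n$, a double limit $n \to \infty$ then $N \to \infty$ (using $\tau^\infty = \infty$ a.s.\ from the first step) yields $\mathbb{P}[\tau^0 \leq t] = 0$ for every $t > 0$, hence $\tau^0 = \infty$ a.s. Together, $\tau = \tau^0 \wedge \tau^\infty = \infty$ a.s. The subtle point on which everything hinges is that the perturbation $e^{-t}$ added in \eqref{def.WcompSDE} survives the It\^o-correction from $\tfrac{1}{2}\bar\sigma^2$, so that the $1/Z^2$-drift in $-\log Z$ is non-positive pathwise without any assumption on $\sigma$.
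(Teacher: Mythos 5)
Your argument is correct, but it follows a genuinely different route from the paper. You separate the two boundaries, using $Z^2$ (really just a first/second moment bound) to rule out explosion and the Lyapunov function $-\log Z$ with the double stopping $\tau^0_n\wedge T_N^\infty$ to rule out hitting zero; the paper instead uses the single function $V(z)=z+1/z$ together with the two--sided exit times $\tau_k^\zeta=\inf\{t>0:Z(t)=k \text{ or }1/k\}$, shows that the drift of $Z+1/Z$ is bounded above by a constant $K_3(T)$ uniformly on $(0,\infty)$ (using $|\phi_1(z)|\le K_1 z$ near zero, exactly your $C_N$ bound, and the fact that $-e^{-T}/z^3$ dominates there), and then derives a contradiction from $\mathbb{E}[Z(T\wedge\tau_k^\zeta)+1/Z(T\wedge\tau_k^\zeta)]\le K(T,\zeta)$ versus $Z+1/Z=k+1/k$ on $\{\tau_k^\zeta\le T\}$. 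The two proofs hinge on the same cancellation: because $\bar\sigma^2(t)\le\|\sigma(t)\|_F^2$, the deliberately inserted $e^{-t}/Z$ term in \eqref{def.WcompSDE} survives the It\^o correction (in the paper it appears as $-e^{-s}/Z^3$ in the drift of $Z+1/Z$, in your computation as $-e^{-t}/Z^2$ in the drift of $-\log Z$), so your closing remark identifies precisely the mechanism the paper exploits. Your splitting buys slightly more transparent one--sided estimates (a Markov bound at $\infty$, a $\log n$ bound at $0$) at the cost of bookkeeping two families of stopping times and a double limit; the paper's single function handles both boundaries at once and never leaves the interval on which $Z$ is defined. On that last point, note a small wrinkle in your first step: since $Z$ solves \eqref{def.ZcompSDE} only on $[0,\tau)$, you cannot literally work on $[0,t\wedge T_N^\infty]$, because $T_N^\infty$ may exceed the hitting time of zero; you should stop at $t\wedge T_N^\infty\wedge\tau^0_n$ there as well (the drift bound $3\|\sigma(s)\|_F^2+2e^{-s}$ is independent of $n$ and $N$, so the estimate and the conclusion are unchanged), which is exactly the issue the paper's two--sided exit times are designed to avoid.
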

\begin{proof}
Let $\zeta=\|\xi\|+1>0$ and define $k^\ast\in\mathbb{N}$ such that
$k^\ast>\zeta$. Define for each $k\geq k^\ast$ the stopping time
$\tau_k^\zeta=\inf\{t>0:Z(t)=k \text{ or } 1/k\}$. We see that
$\tau_k^\zeta$ is an increasing sequence of times and so
$\tau_\infty^\zeta:=\lim_{k\to\infty} \tau_k^\zeta$. Suppose, in
contradiction to the desired claim,
 that $\tau_\infty^\zeta<+\infty$ with positive probability for some $\zeta$. Then, there exists $T>0$, $\epsilon>0$ and $k_0\in\mathbb{N}$ such that
\[
\mathbb{P}[\tau_k^\zeta\leq T] \geq \epsilon, \quad k\geq
k_0>k^\ast.
\]
Therefore, by It\^o's rule we have that
\begin{multline*}
Z(T\wedge \tau_k^\zeta)+\frac{1}{Z(T\wedge \tau_k^\zeta)}=
\zeta+\frac{1}{\zeta}\\+\int_0^{T\wedge \tau_k^\zeta}
\left\{-\phi_1(Z(s))+\frac{\phi_1(Z(s))}{Z(s)}\frac{1}{Z(s)}-\frac{e^{-s}}{Z(s)^3}
+\frac{\|\sigma(s)\|^2_F+e^{-s}}{Z(s)}  \right\}\,ds
\\
+\sum_{j=1}^r \int_0^{T\wedge \tau_k^\zeta}
(1-Z(s)^{-2})\bar{\sigma}_j(s)\,dB_j(s).
\end{multline*}
We remove the non--autonomous terms in the first integral by noting
that $\|\sigma(s)\|^2_F\leq \sigma_T^2<+\infty$ for all $s\in
[0,T]$, so we arrive at
\begin{equation*}
Z(T\wedge \tau_k^\zeta)+\frac{1}{Z(T\wedge \tau_k^\zeta)}=
\zeta+\frac{1}{\zeta}+\int_0^{T\wedge \tau_k^\zeta} b_T(Z(s))\,ds
+ M(T)
\end{equation*}
where we have defined
\begin{equation} \label{eq.bT}
b_T(z)= -\phi_1(z)+\frac{\phi_1(z)}{z}\frac{1}{z}-\frac{e^{-T}}{z^3}
+\frac{1+\sigma_T^2}{z}, \quad z>0,
\end{equation}
and $M=\{M(t):t\in [0,T]\}$ is the martingale defined by
\[
M(t)=\sum_{j=1}^r \int_0^{t\wedge \tau_k^\zeta}
(1-Z(s)^{-2})\bar{\sigma}_j(s)\,dB_j(s), \quad t\in [0,T].
\]
For $z\geq 1$, since $\phi_1(z)\geq 0$ for all $z\geq 0$, we have
\[
b_T(z)= -\phi_1(z)(1-z^{-2}) -\frac{e^{-T}}{z^3}
+\frac{1+\sigma_T^2}{z}\leq \frac{1+\sigma_T^2}{z}\leq 1+\sigma_T^2.
\]
For $z\in (0,1]$, the Lipschitz continuity of $\phi_1$ and the fact that $\phi_1(0)=0$ 
guarantees that $|\phi_1(z)|\leq K_1 z$ for some $K_1>0$. Therefore we have
\[
b_T(z)\leq \frac{K_1+1+\sigma_T^2}{z}-\frac{e^{-T}}{z^3},
\]
and so we can readily show that there is $K_2(T)>0$ such that
$b_T(z)\leq K_2(T)$ for all $z\in (0,1]$. Define
$K_3(T)=\max(K_2(T),1+\sigma^2_T)$. Therefore we have $b_T(z)\leq
K_3(T)$ for all $z>0$. Since $Z(s)\in (0,\infty)$ for all $s\in
[0,T\wedge \tau_k^\zeta]$ we have that
\[
Z(T\wedge \tau_k^\zeta)+\frac{1}{Z(T\wedge \tau_k^\zeta)}\leq
\zeta+\frac{1}{\zeta}+\int_0^{T\wedge \tau_k^\zeta} K_3(T)+ M(T)
\leq \zeta+\frac{1}{\zeta}+ TK_3(T)+ M(T).
\]
By the optional sampling theorem, we have that
\[
\mathbb{E}\left[Z(T\wedge \tau_k^\zeta)+\frac{1}{Z(T\wedge
\tau_k^\zeta)}\right]\leq
 \zeta+\frac{1}{\zeta}+ TK_3(T)=:K(T,\zeta)<+\infty.
\]
Define next the event $C_k=\{\tau_k^\zeta\leq T\}$. Then for $k\geq
k_0$ we have $\mathbb{P}[C_k]\geq \epsilon$. If $\omega\in C_k$, we
have that $\tau_k^\zeta\leq T$, so $Z(T\wedge \tau_k^\zeta)=k$ or
$Z(T\wedge \tau_k^\zeta)=1/k$. Hence $Z(T\wedge
\tau_k^\zeta)+1/Z(T\wedge \tau_k^\zeta)=k+1/k$ for $\omega\in C_k$.
Hence
\begin{align*}
K(T,\zeta)&\geq \mathbb{E}\left[Z(T\wedge \tau_k^\zeta)+\frac{1}{Z(T\wedge \tau_k^\zeta)}\right]\\
&\geq \mathbb{E}\left[\left(Z(T\wedge \tau_k^\zeta)+\frac{1}{Z(T\wedge \tau_k^\zeta)}\right)I_{C_k}\right]\\
&=(k+1/k)\mathbb{P}[C_k]\geq (k+1/k)\epsilon.
\end{align*}
Therefore, we have that $K(T,\zeta)\geq (k+1/k)\epsilon$ for all
$k\geq k_0$. Letting $k\to\infty$ gives a contradiction.
\end{proof}
Given that $Z$ is positive and well--defined for all $t\geq 0$, we
are now in a position to formulate and prove a comparison result,
which shows that $\|X(t)\|\leq Z(t)$ for all $t\geq 0$ a.s. Once
this result is proven, the main theorem will be established if we
show that the solution $Z$ of \eqref{def.ZcompSDE} is bounded.
\begin{lemma} \label{lemma.XZcomp}
Suppose that $f$ obeys \eqref{eq.fglobalunperturbed} 
and that $\sigma$ obeys \eqref{eq.sigmacns}.
Suppose that $X$ is a continuous adapted process which obeys \eqref{eq.mainstocheqn}, 
and let $Z$ be the unique continuous adapted process  corresponding to $X$ which obeys \eqref{def.ZcompSDE}.
Then $\|X(t)\|\leq Z(t)$ for all $t\geq 0$ a.s.
\end{lemma}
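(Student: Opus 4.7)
The strategy is a pathwise first-crossing argument, exploiting the fact that the diffusion coefficients $\bar\sigma_j$ in \eqref{def.sigbarj} were chosen precisely so that, on any interval where $\|X(t)\|>0$, the stochastic integrals in $d\|X(t)\|$ (obtained from It\^o applied to $\sqrt{\|X\|^2}$) and in $dZ(t)$ from \eqref{def.ZcompSDE} coincide. Consequently, $\zeta(t):=Z(t)-\|X(t)\|$ is locally a continuous process of finite variation on such intervals, and the comparison can be performed in essentially deterministic fashion, without appeal to any Yamada--Watanabe type estimate or Tanaka formula.

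Work on the a.s.\ event $\Omega^\ast$ on which $X$ and $Z$ are well-defined and $Z(t)>0$ for all $t\geq 0$ (the last assertion is Lemma~\ref{lemma.Zpos}). Since $\zeta(0)=(\|\xi\|+1)-\|\xi\|=1>0$ and $\zeta$ is continuous, the time
\[
\tau(\omega):=\inf\{t\geq 0 : \zeta(t,\omega)\leq 0\}
\]
satisfies $\tau>0$ on $\Omega^\ast$, and on $\{\tau<\infty\}$ one has $\zeta(\tau)=0$, i.e.\ $\|X(\tau)\|=Z(\tau)>0$. I shall show by contradiction that $\Omega^\ast\cap\{\tau<\infty\}$ is empty.

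Fix $\omega\in\Omega^\ast\cap\{\tau<\infty\}$. Continuity of $\|X\|$ and $\|X(\tau)\|>0$ supply $\delta\in(0,\tau)$ with $\|X(t)\|\geq \|X(\tau)\|/2$ on $[\tau-\delta,\tau+\delta]$. It\^o's formula applied to $\sqrt{\|X(t)\|^2}$ on this interval (valid because $\|X\|^2$ is bounded away from $0$), together with the identity $\sum_{i=1}^d X_i\sigma_{ij}=\|X\|\bar\sigma_j$, yields
\[
d\|X(t)\|=\left[-\frac{\langle X,f(X)\rangle}{\|X\|}+\frac{\|\sigma\|_F^2-\bar\sigma^2}{2\|X\|}\right]dt+\sum_{j=1}^r\bar\sigma_j(t)\,dB_j(t),
\]
whose stochastic part matches that in \eqref{def.ZcompSDE} term for term. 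Subtracting cancels the martingales, so that $\zeta$ is absolutely continuous on $[\tau-\delta,\tau+\delta]$ with density
\[
\mu(t):=-\phi_1(Z(t))+\frac{\langle X(t),f(X(t))\rangle}{\|X(t)\|}+\frac{\|\sigma(t)\|_F^2+e^{-t}}{Z(t)}-\frac{\|\sigma(t)\|_F^2-\bar\sigma(t)^2}{2\|X(t)\|}.
\]
Evaluating at $t=\tau$ with $R^\ast:=Z(\tau)=\|X(\tau)\|>0$ and combining the common-denominator terms,
\[
\mu(\tau)=-\phi_1(R^\ast)+\frac{\langle X(\tau),f(X(\tau))\rangle}{R^\ast}+\frac{\|\sigma(\tau)\|_F^2+\bar\sigma(\tau)^2+2e^{-\tau}}{2R^\ast}\geq \frac{e^{-\tau}}{R^\ast}>0,
\]
where the lower bound uses $\langle x,f(x)\rangle/\|x\|\geq \phi(\|x\|)\geq \phi_1(\|x\|)$ for $x\neq 0$ (the first inequality from \eqref{def.phimult}, the second from Lemma~\ref{lemma.phi1loclip}). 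Continuity of $\mu$ forces $\dot\zeta>0$ on a neighbourhood of $\tau$, so that by the fundamental theorem of calculus $\zeta(s)<\zeta(\tau)=0$ for $s$ slightly less than $\tau$; but $\zeta(s)>0$ for $s<\tau$ by definition of $\tau$, a contradiction. Hence $\tau=+\infty$ on $\Omega^\ast$, which gives $\|X(t)\|<Z(t)$ for all $t\geq 0$ a.s.

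The main technical point that needs care, rather than a deep obstacle, is the rigorous derivation of It\^o's formula for $\|X\|$ on the \emph{random} interval $[\tau-\delta,\tau+\delta]$. I would handle this by introducing stopping times $\rho_n:=\inf\{t\geq 0:\|X(t)\|\leq 1/n\}$: on $[0,\rho_n]$ the function $\sqrt{\cdot}$ is smooth in $\|X\|^2$ and It\^o applies in the standard sense, and on $\{\tau<\rho_n\}$ (which occurs for all sufficiently large $n$, by continuity and positivity of $\|X\|$ near $\tau$) the computation of $\dot\zeta=\mu$ and the estimate $\mu(\tau)>0$ transfer directly to $\tau$. All remaining verifications are routine.
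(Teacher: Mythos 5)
Your argument is correct in substance, and it takes a genuinely different route from the paper. The paper compares the squared processes $\|X\|^2$ and $Z^2$, and because their diffusion coefficients involve square roots it must run a Yamada--Watanabe/Ikeda--Watanabe type machine: vanishing of the local time of $\Delta=\|X\|^2-Z^2$ at zero via the estimate \eqref{eq.loctimeint}, stopping times, optional sampling, the Lipschitz property of $\phi_2$ from Lemma~\ref{lemma.phi2loclip}, and a Gronwall argument in expectation. You instead exploit the structural fact that $\bar{\sigma}_j$ in \eqref{def.sigbarj} is built from $X$ itself, so that on any interval where $\|X\|>0$ the martingale part of $d\|X(t)\|$ is \emph{literally} $\sum_j\bar{\sigma}_j\,dB_j$, i.e. the same process driving \eqref{def.ZcompSDE}; hence $\zeta=Z-\|X\|$ is locally absolutely continuous there and a pathwise first--crossing argument applies. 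Your drift computation and the sign estimate at the crossing time are right: at $\tau$ one has $Z(\tau)=\|X(\tau)\|=R^\ast>0$ (positivity of $Z$ from Lemma~\ref{lemma.Zpos}), and $\mu(\tau)\geq e^{-\tau}/R^\ast>0$ using $\langle x,f(x)\rangle/\|x\|\geq\phi(\|x\|)\geq\phi_1(\|x\|)$ from \eqref{def.phimult} and \eqref{eq.phi1ltphi}. This buys a shorter, more elementary proof (no local time, no Gronwall, no Lipschitz estimate on $\phi_2$ needed for the comparison itself), and it even yields the strict inequality $\|X(t)\|<Z(t)$; what the paper's route buys is robustness, since the expectation/local-time comparison does not depend on the noise terms cancelling exactly and is the standard template when the two diffusion coefficients merely share a modulus of continuity.

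The one place you should repair is the localisation used to justify It\^o's formula for $\|X\|$ near $\tau$. With $\rho_n:=\inf\{t\geq 0:\|X(t)\|\leq 1/n\}$, the claim that $\{\tau<\rho_n\}$ holds for all large $n$ is false in general: $\rho_n$ is the \emph{first} time $\|X\|$ drops to $1/n$, and $\|X\|$ may come arbitrarily close to (or hit) zero well before $\tau$ and recover --- indeed if $\xi=0$ then $\rho_n=0$ for every $n$ --- so positivity of $\|X\|$ near $\tau$ gives no control on $\rho_n$. The fix is routine but should be stated correctly: either localise from a rational time shortly before $\tau$, e.g. set $\theta_{q,m}:=\inf\{t\geq q:\|X(t)\|\leq 1/m\}$ for rational $q\geq 0$ and $m\in\mathbb{N}$, apply It\^o on $[q,t\wedge\theta_{q,m}]$ (a countable family of a.s. identities), and then for each $\omega$ choose $m$ with $1/m<\min_{[\tau-\delta,\tau+\delta]}\|X\|$ and rational $q\in(\tau-\delta,\tau)$; or apply It\^o globally to $t\mapsto\sqrt{\varepsilon+\|X(t)\|^2}$ and let $\varepsilon\downarrow 0$ uniformly on the neighbourhood of $\tau$ where $\|X\|$ is bounded away from zero. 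With that adjustment the proof is complete.
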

\begin{proof}
Define $Y_2(t)=\|X(t)\|^2$ for $t\geq 0$. Then by the definition of
$\bar{\sigma}_j$ for $j=1,\ldots,r$ from \eqref{def.sigbarj}, we
have
\[
2\sum_{i=1}^d X_i(t)\sigma_{ij}(t)=2\sqrt{Y_2(t)}\bar{\sigma}_j(t),
\quad t\geq 0.
\]
By It\^o's rule, we have
\[
dY_2(t)=\left( -2\langle X(t),f(X(t))\rangle
+\|\sigma(t)\|^2_F\right)\,dt + 2 \sum_{j=1}^r \sum_{i=1}^d
X_i(t)\sigma_{ij}(t)\,dB_j(t), \quad t\geq 0.
\]
Using this semimartingale decomposition and the previous identity,
we get
\begin{equation} \label{eq.Y2diffok}
dY_2(t)=\left( -2\langle X(t),f(X(t))\rangle
+\|\sigma(t)\|^2_F\right)\,dt + 2\sqrt{Y_2(t)} \sum_{j=1}^r
\bar{\sigma}_j(t)\,dB_j(t).
\end{equation}
Let $\phi_1$ be the function defined by \eqref{def.phi1}, 
$\bar{\sigma}$ the process defined by \eqref{def.barsigma}, and
define the processes $\eta_1$ and $\eta_2$ by
\begin{align*}
\eta_1(t)&= \|\sigma(t)\|^2_F + 2e^{-t} +\bar{\sigma}(t)^2, \quad t\geq 0,\\
\eta_2(t)&=2\sqrt{Y_2(t)}\phi_1(\sqrt{Y_2(t)})-2\langle
X(t),f(X(t))\rangle, \quad t\geq 0,
\end{align*}
and the processes $\beta_1$ and $\beta_2$ by
\begin{align}
\label{def.beta1}
\beta_1(t)&=b(Z_2(t),t) +\eta_1(t), \quad t\geq 0,\\
\label{def.beta2} \beta_2(t)&=b(Y_2(t),t) +\eta_2(t), \quad t\geq 0,
\end{align}
where we have defined $b:[0,\infty)\times [0,\infty)\to\mathbb{R}$
by
\begin{equation} \label{def.b}
b(x,t)=-2\phi_2(x)+\|\sigma(t)\|^2_F, \quad x\geq 0, t\geq 0,
\end{equation}
where $\phi_2$ is defined in \eqref{def.phi2}. 

Granted these definitions, we can rewrite \eqref{eq.Y2diffok} as
\begin{equation} \label{eq.Y2diffcomp}
dY_2(t)=
\beta_2(t)\,dt + 2\sqrt{Y_2(t)} \sum_{j=1}^r
\bar{\sigma}_j(t)\,dB_j(t).
\end{equation}
Next, by virtue of Lemma~\ref{lemma.Zpos} it follows that there is a
positive  process $Z_2=\{Z_2(t):t\geq 0\}$ defined by
$Z_2(t)=Z(t)^2$ for all $t\geq 0$. Therefore, applying It\^o's rule
to \eqref{def.ZcompSDE}, and using the definition
\eqref{def.barsigma}, we have
\begin{multline*}
dZ_2(t)=\left(2Z(t)\left\{-\phi_1(Z(t))+\frac{e^{-t}+\|\sigma(t)\|^2_F}{Z(t)}
\right\}+\bar{\sigma}^2(t)  \right)\,dt
\\+ 2Z(t)\sum_{j=1}^r \bar{\sigma}_j(t)\,dB_j(t).
\end{multline*}
Hence by the definition of $\phi_2$, \eqref{def.beta1} and $Z_2$ we
have
\begin{equation} \label{eq.Z2diffcomp}
dZ_2(t)=
\beta_1(t)\,dt + 2\sqrt{Z_2(t)} \sum_{j=1}^r
\bar{\sigma}_j(t)\,dB_j(t).
\end{equation}
Notice also that $Y_2(0)=\|\xi\|^2<1+\|\xi\|^2=Z_2(0)$.

Our proof now involves comparing $Y_2$ and $Z_2$, viewed as
solutions of \eqref{eq.Y2diffcomp} and \eqref{eq.Z2diffcomp}
respectively. Proving that $Y_2(t)\leq Z_2(t)$ for all $t\geq 0$
a.s. suffices. The proof is an adaptation of standard comparison
proofs. Extant results can not be applied immediately, because we
must carefully deal with the fact that the state--dependence in the
drift in both \eqref{eq.Y2diffcomp} and \eqref{eq.Z2diffcomp} is
merely \emph{locally} Lipschitz continuous, and that the diffusion
coefficients are non--autonomous through the presence of a
\emph{process} rather than simple deterministic dependence of time.

To prove that $Y_2$ is dominated by $Z_2$, we first show that
$\eta_1(t)>0\geq \eta_2(t)$ for $t\geq 0$. The first inequality is
immediate. To show that $\eta_2(t)\leq 0$ for all $t\geq 0$, first
note that if $X(t)=0$, then $\eta_2(t)=0$. If $\|X(t)\|>0$, by
\eqref{def.phimult} and the definition of $Y_2$, we have that
\[
\frac{\langle X(t),f(X(t))\rangle}{\|X(t)\|}\geq
\inf_{\|x\|=\|X(t)\|} \frac{\langle
x,f(x)\rangle}{\|x\|}=\phi(\sqrt{Y_2(t)}).
\]
Next, if $\phi_1$  is the function defined in \eqref{def.phi1}, by \eqref{eq.phi1ltphi} we have 
\[
\frac{\langle X(t),f(X(t))\rangle}{\|X(t)\|}\geq \phi(\sqrt{Y_2(t)}) \geq \phi_1(\sqrt{Y_2(t)}).
\]
Hence $\langle X(t),f(X(t))\rangle\geq
\|X(t)\|\phi_1(\sqrt{Y_2(t)})=\sqrt{Y_2(t)}\phi_1(\sqrt{Y_2(t)})$, so
$\eta_2(t)\leq 0$. Therefore, because $\eta_2\leq 0$ and $\eta_1>0$,
we have
\begin{equation} \label{eq.bbeta12}
\beta_2(t)\leq b(Y_2(t),t), \quad \beta_1(t)> b(Z_2(t),t), \quad
t\geq 0.
\end{equation}
By Lemma~\ref{lemma.phi2loclip}, $\phi_2$ is locally Lipschitz
continuous, so for every $n\geq 0$ there is a $\kappa_n>0$ such that
\begin{equation} \label{eq.btlip}
|b(x,t)-b(y,t)|=|2\phi_2(x)-2\phi_2(y)|\leq \kappa_n|x-y|
\quad\text{for all $x,y\in [0,n]$}.
\end{equation}
Now define $\Delta(t):=Y_2(t)-Z_2(t)$ for $t\geq 0$. Let
$\rho(x)=4x$ for $x\geq 0$. Then $\rho$ is increasing and
$\int_{0^+} 1/\rho(x)\,dx = +\infty$. Now by \eqref{def.barsigma}
\[
d[\Delta](t)=4\left(\sqrt{Y_2(t)}-\sqrt{Z_2(t)}\right)^2
\sum_{j=1}^r \bar{\sigma}_j^2(t)\,dt
=4\left(\sqrt{Y_2(t)}-\sqrt{Z_2(t)}\right)^2 \bar{\sigma}^2(t)\,dt.
\]
If
\begin{equation} \label{eq.loctimeint}
\int_0^t
\rho(\Delta(s))^{-1}I_{\{\Delta(s)>0\}}\,d[\Delta](s)<+\infty,
\text{a.s.}
\end{equation}
then $\Lambda_t^0(\Delta)=0$ a.s., where $\Lambda_\cdot^0(\Delta)$
is the local time of $\Delta$ in zero (see \cite[Proposition
V.39.3]{RogWill:1989}).

If $y\geq x\geq 0$, we have that $(\sqrt{y}-\sqrt{x})^2\leq y-x$.
Define $J=\{s\in [0,t]: \Delta(s)>0\}$. Therefore, $s\in J$ we have
$Y_2(s)>Z_2(s)>0$ and so
\[
\left(2\sqrt{Y_2(t)}-2\sqrt{Z_2(t)}\right)^2\leq
4(Y_2(s)-Z_2(s))=4\Delta(s)=\rho(\Delta(s)).
\]
Thus
\begin{align*}
\lefteqn{\int_0^t \rho(\Delta(s))^{-1}I_{\{\Delta(s)>0\}}\,d[\Delta](s)}\\
&=\int_{J} \rho(\Delta(s))^{-1}I_{\{\Delta(s)>0\}}\,d[\Delta](s)
+\int_{[0,t]\setminus J} \rho(\Delta(s))^{-1}I_{\{\Delta(s)>0\}}\,d[\Delta](s)\\
&=\int_{J} \rho(\Delta(s))^{-1} \cdot 4\left(\sqrt{Y_2(s)}-\sqrt{Z_2(s)}\right)^2 \bar{\sigma}^2(s)\,ds\\
&\leq \int_{J}  \bar{\sigma}^2(s)\,ds\leq \int_0^t
\bar{\sigma}^2(s)\,ds \leq \int_0^t \|\sigma(s)\|^2_F\,ds<+\infty,
\end{align*}
as required.

Next, let
\[
\tau_n=\inf\{t>0:Y_2(t)=n \text{ or } Z_2(t)=n\}, \quad n\geq
\lceil 1+\|\xi\|^2\rceil.
\]
By Lemma~\ref{lemma.Zpos}, $Z$ does not explode in finite time, so
neither does $Z_2$. Also, as $\|X\|$ does not explode in finite
time, we have that $\tau_n\to\infty$ as $n\to\infty$. Using the fact
that $\Lambda_t^0(\Delta)=0$ a.s., together with
\eqref{eq.Y2diffcomp} and \eqref{eq.Z2diffcomp} we get
\begin{equation} \label{eq.Delpsemimart1}
\Delta(t\wedge \tau_n)^+=\Delta(0)^+ + \int_0^{t\wedge \tau_n}
I_{\{\Delta(s)>0\}}(\beta_2(s)-\beta_1(s))\,ds + M(t),
\end{equation}
where we have defined the local martingale $M$ by
\[
M(t)=\int_0^{t\wedge \tau_n} I_{\{\Delta(s)>0\}}2
\left(\sqrt{Y_2(s)}-\sqrt{Z_2(s)}\right)\sum_{j=1}^r
\bar{\sigma}_j(s)\,dB_j(s).
\]
Therefore by \eqref{def.barsigma}, and the fact that
$\sqrt{Y_2(s)}\vee \sqrt{Z_2(s)}\leq \sqrt{n}$ for $s\in [0,t\wedge
\tau_n]$
\begin{align*}
\langle M\rangle(t)
&= 4\int_0^{t\wedge \tau_n} I_{\{\Delta(s)>0\}} \left(\sqrt{Y_2(s)}-\sqrt{Z_2(s)}\right)^2  \bar{\sigma}^2(s)\,ds\\
&\leq 4\int_0^{t\wedge \tau_n} I_{\{\Delta(s)>0\}} \left(\sqrt{Y_2(s)}-\sqrt{Z_2(s)}\right)^2 \|\sigma(s)\|^2_F\,ds\\
&\leq 4n\int_0^{t\wedge \tau_n}  \|\sigma(s)\|^2_F\,ds\leq
4n\int_0^t  \|\sigma(s)\|^2_F\,ds.
\end{align*}
Now $\Delta(0)=Y_2(0)-Z_2(0)<0$, so by the optional sampling
theorem, we deduce from \eqref{eq.Delpsemimart1} that
\begin{equation} \label{eq.EDelpsemimart1}
0\leq \mathbb{E}[\Delta(t\wedge
\tau_n)^+]=\mathbb{E}\left[\int_0^{t\wedge \tau_n}
I_{\{\Delta(s)>0\}}(\beta_2(s)-\beta_1(s))\,ds\right].
\end{equation}

We now estimate the integrand on the right--hand side. If
$\Delta(s)>0$, we have $\Delta(s)=Y_2(s)-Z_2(s)>0$. Thus for $s\in
[0,t\wedge \tau_n]$, because $Y_2(s)\vee Z_2(s)\leq n$, we may use
\eqref{eq.bbeta12} and then \eqref{eq.btlip} to get
\begin{align*}
I_{\{\Delta(s)>0\}}(\beta_2(s)-\beta_1(s))
&=\beta_2(s)-\beta_1(s) \leq b(Y_2(s),s)-b(Z_2(s),s)\\
&\leq |b(Y_2(s),s)-b(Z_2(s),s)| \leq \kappa_n|Y_2(s)-Z_2(s)|.
\end{align*}
Since $Y_2(s)-Z_2(s)>0$, this gives
$I_{\{\Delta(s)>0\}}(\beta_2(s)-\beta_1(s))\leq \kappa_n
(Y_2(s)-Z_2(s))=\kappa_n \Delta(s)^+$. In the case when
$\Delta(s)\leq 0$, we have
$I_{\{\Delta(s)>0\}}(\beta_2(s)-\beta_1(s))=0\leq \kappa_n
\Delta(s)^+$. Thus, the estimate
$I_{\{\Delta(s)>0\}}(\beta_2(s)-\beta_1(s))=0\leq \kappa_n
\Delta(s)^+$ holds for all $s\in [0,t\wedge\tau_n]$, so inserting
this bound into \eqref{eq.EDelpsemimart1}, we get
\begin{equation}\label{eq.EDelpsemimart2}
0\leq \mathbb{E}[\Delta(t\wedge \tau_n)^+] \leq
\mathbb{E}\left[\int_0^{t\wedge \tau_n}  \kappa_n
\Delta(s)^+\,ds\right]=\kappa_n\mathbb{E}\int_0^{t\wedge \tau_n}
\Delta(s)^+\,ds.
\end{equation}
As to the term on the righthand side, by considering the cases when
(a) $\tau_n\leq t$ and (b) $\tau_n>t$, we can show that
\[
\int_0^{t\wedge \tau_n} \Delta(s)^+\,ds \leq \int_0^t \Delta(s\wedge
\tau_n)^+\,ds.
\]
Putting this estimate into \eqref{eq.EDelpsemimart2} gives
\begin{equation}\label{eq.EDelpsemimart3}
0\leq \mathbb{E}[\Delta(t\wedge \tau_n)^+] \leq \kappa_n \int_0^{t}
\mathbb{E}[\Delta(s\wedge \tau_n)^+]\,ds, \quad t\geq 0.
\end{equation}
Since $t\mapsto \Delta(t)$ has a.s. continuous sample paths, so does
$t\mapsto \Delta(t\wedge \tau_n)$, and therefore
$\delta_n:[0,\infty)\to \mathbb{R}$ defined by
$\delta_n(t)=\mathbb{E}[\Delta(t\wedge \tau_n)]$ for $t\geq 0$ is a
non--negative and continuous function obeying $\delta_n(t)\leq
\kappa_n\int_0^t \delta_n(s)\,ds$ for all $t\geq 0$. By Gronwall's
inequality, $\delta_n(t)=0$ for all $t\geq 0$. Therefore we have
$Y_2(t\wedge \tau_n)-Z_2(t\wedge \tau_n)\leq 0$ for all $t\geq 0$
a.s. and for each $n\in\mathbb{N}$. Since $\tau_n\to\infty$ as
$n\to\infty$, it follows that $Y_2(t)-Z_2(t)\leq 0$ for all $t\geq
0$ a.s., as required.
\end{proof}
In the next lemma, we show that $Y_0$ defined by \eqref{def.Y0} is
bounded. We notice that the bound on the solution is deterministic, and therefore does not depend on 
the process $X$, which is a solution of \eqref{eq.mainstocheqn}, and on which $Y_0$ depends. 
\begin{lemma} \label{lemma.Y0bounded}
Suppose that $S_h'$ obeys \eqref{eq.thetaboundedh}. If $Y_0$ is defined
by \eqref{def.Y0}, then there is $c_1>0$ such that
\[
\limsup_{t\to\infty} |Y_0(t)|\leq c_1, \quad \text{a.s.}
\]
\end{lemma}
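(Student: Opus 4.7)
The plan is to build on the representation $Y_0(t)=e^{-t}\tilde Y_0(t)$ with $\tilde Y_0(t)=\int_0^t e^s\,dN(s)$ and $N(t)=\sum_{j=1}^r\int_0^t\bar\sigma_j(s)\,dB_j(s)$. Because the process $\bar\sigma$, and hence $N$, depends on $X$, one cannot directly invoke Theorem~\ref{theorem.Yclassify}; however, the pointwise deterministic bound $\bar\sigma^2(s)\le \|\sigma(s)\|_F^2$ from \eqref{def.sigbarj}--\eqref{def.barsigma} is enough to drive an argument based on the exponential martingale inequality together with a geometric recursion on the grid $nh$.

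First I would decompose, for $t\in[nh,(n+1)h]$,
\[
Y_0(t)=e^{-(t-nh)}Y_0(nh)+e^{-t}\int_{nh}^{t}e^s\,dN(s),
\]
and set $a_n:=\int_{nh}^{(n+1)h}\|\sigma(s)\|_F^2\,ds$ and $\beta_n:=e^{-nh}\sup_{nh\le t\le (n+1)h}\bigl|\int_{nh}^t e^s\,dN(s)\bigr|$. Since $t\mapsto \int_{nh}^t e^s\,dN(s)$ is a continuous martingale whose quadratic variation on $[nh,(n+1)h]$ is bounded a.s.\ by the deterministic constant $e^{2(n+1)h}a_n$, the exponential inequality for continuous martingales gives
\[
\mathbb{P}[\beta_n\ge c]\le 2\exp\!\left(-\frac{c^2}{2e^{2h}a_n}\right),\qquad c>0.
\]
Given hypothesis \eqref{eq.thetaboundedh} with threshold $\epsilon'$, I would fix any $c>\epsilon' e^h$ and pick $\epsilon_0\in(\epsilon',ce^{-h})$ so that $S_h'(\epsilon_0)<\infty$. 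The vanishing of the summand of $S_h'(\epsilon_0)$ forces $a_n\to 0$, and then for all sufficiently large $n$ one has $\exp\!\bigl(-(c^2 e^{-2h}-\epsilon_0^2)/(2a_n)\bigr)\le \sqrt{a_n}$, so
\[
\sum_n\exp\!\left(-\frac{c^2}{2e^{2h}a_n}\right)\le C\sum_n\sqrt{a_n}\exp\!\left(-\frac{\epsilon_0^2}{2a_n}\right)+\text{finitely many terms}<\infty.
\]
By the Borel--Cantelli lemma, $\beta_n<c$ eventually, almost surely.

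Finally, setting $t=(n+1)h$ in the decomposition gives $|Y_0((n+1)h)|\le e^{-h}|Y_0(nh)|+e^{-h}\beta_n$, and iterating yields $\limsup_{n\to\infty}|Y_0(nh)|\le c/(e^h-1)$ a.s. Combined with the estimate $\sup_{nh\le t\le (n+1)h}|Y_0(t)|\le |Y_0(nh)|+\beta_n$, this delivers $\limsup_{t\to\infty}|Y_0(t)|\le c/(1-e^{-h})=:c_1$ a.s., with $c_1$ depending only on $h$ and $\epsilon'$, hence not on $X$. The main obstacle in the plan is the Borel--Cantelli summability step: the exponential tail bound produces $\sum\exp(-c^2/(2e^{2h}a_n))$, whereas \eqref{eq.thetaboundedh} is phrased in terms of $\sum\sqrt{a_n}\exp(-\epsilon^2/(2a_n))<\infty$. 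The missing $\sqrt{a_n}$ factor is recovered by a small enlargement of the exponent together with $a_n\to 0$, since the exponential decay as $a_n\to 0$ dominates any polynomial prefactor.
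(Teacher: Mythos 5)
Your proposal is correct, and it reaches the bound by a genuinely different route from the paper. The paper first reduces to $h=1$ (via Theorem~\ref{theorem.Yclassify}), then splits the work into two Borel--Cantelli arguments: one for the endpoint values $V_0(n)$, handled by representing $\tilde{Y}_{n-1}$ as a time--changed integral against a one--dimensional Brownian motion on an extension and invoking Hajek's comparison result to dominate by the Gaussian variable $\bar{Y}_{n-1}(n)$ with deterministic variance $\int_{n-1}^n e^{2s}\|\sigma(s)\|_F^2\,ds$; and a second one for the intra--interval maxima $Z_0(n)$, handled by the martingale time--change theorem and the reflection principle. In both cases the Mills ratio estimate \eqref{eq.mills} produces exactly the prefactor $\theta(n)=\sqrt{a_n}$ appearing in the summand of $S_1'$, so the summability is matched with no slack in the exponent. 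You instead keep the general $h$, treat endpoint and running maximum in a single stroke via the exponential martingale inequality applied to $t\mapsto \int_{nh}^t e^s\,dN(s)$, using only the pathwise deterministic bound $\bar{\sigma}^2(s)\leq\|\sigma(s)\|_F^2$ on the quadratic variation --- this neatly sidesteps both the Hajek comparison and the explicit Gaussian/time--change computations, which is the main thing your argument buys. The price is the lost prefactor $\sqrt{a_n}$ in the tail bound, and your repair is sound: since the summand of $S_h'(\epsilon_0)$ tends to zero you get $a_n\to 0$, and then $\exp(-\kappa/(2a_n))\leq\sqrt{a_n}$ eventually because $a_n\log(1/a_n)\to 0$, so taking $c>\epsilon'e^h$ and $\epsilon_0\in(\epsilon',ce^{-h})$ restores summability; the geometric recursion $|Y_0((n+1)h)|\leq e^{-h}|Y_0(nh)|+e^{-h}\beta_n$ and the crude bound $\sup_{[nh,(n+1)h]}|Y_0|\leq|Y_0(nh)|+\beta_n$ then give a deterministic $c_1$ depending only on $h$ and $\epsilon'$, exactly as needed (and in fact numerically comparable to the paper's constant, since both arguments concede a factor $e^h$ from intra--interval growth). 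Two minor points to tidy up in a write--up: cite a standard source for the exponential martingale inequality (e.g.\ the reference \cite{Mao1} already used in the paper), and note the trivial case $a_n=0$, where $\beta_n=0$ a.s.\ and the corresponding term can be dropped from the sum.
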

\begin{proof}
We start the proof by showing that we may consider $h=1$ without loss of generality. 
If $S_h'$  obeys \eqref{eq.thetaboundedh}, it follows that $S_1'$ also obeys \eqref{eq.thetaboundedh}, in the sense that
there exists $\epsilon'>0$ such that
\begin{equation}  \label{eq.S1mixed}
S_1'(\epsilon)<+\infty \text{ for all $\epsilon>\epsilon'$ and } S_1'(\epsilon)=+\infty \text{ for all $\epsilon<\epsilon'$}
\end{equation}
Suppose that \eqref{eq.S1mixed} is not true. Then either $S_1'(\epsilon)=+\infty$ for all $\epsilon>0$ or $S_1'(\epsilon)<+\infty$ for all $\epsilon>0$.
The fact that $S_h'$  obeys \eqref{eq.thetaboundedh} implies from Theorem~\ref{theorem.Yclassify} that the process $Y$ defined by \eqref{def.Y}
obeys
\[
0<c_1'\limsup_{t\to\infty} \|Y(t)\|\leq c_2', \quad\text{a.s.}
\]
for some positive deterministic constants $c_1'$ and $c_2'$. 
If $S_1'(\epsilon)=+\infty$ for all $\epsilon>0$, then by part (C) of Theorem~\ref{theorem.Yclassify} we have that $\limsup_{t\to\infty}\|Y(t)\|=+\infty$ a.s.
a contradiction. On the other hand, if $S_1'(\epsilon)<+\infty$ for all $\epsilon>0$, by part (A) of 
Theorem~\ref{theorem.Yclassify} we have that $\lim_{t\to\infty} Y(t)=0$ a.s., which is also a contradiction. Therefore, it must be that \eqref{eq.S1mixed}
holds. Notice also that \eqref{eq.S1mixed} implies 
\begin{equation} \label{eq.intsignto0}
\lim_{n\to\infty}\int_{n-1}^n \|\sigma(s)\|^2_F\,ds =0.
\end{equation}

We now start the proof in earnest. Let $V_0(n):=\int_{n-1}^n e^{s-n}\sum_{j=1}^r
\bar{\sigma}_j(s)\,dB(s)$, $n\geq 1$. Then by  \eqref{def.Y0} we get
\begin{equation} \label{eq.YV0convrep}
Y_0(n)=e^{-n}\sum_{l=1}^n \int_{l-1}^l e^s\sum_{j=1}^r
\bar{\sigma}_j(s)\,dB_j(s)=\sum_{l=1}^n e^{-(n-l)}V_0(l), \quad
n\geq 1.
\end{equation}

Define
\[
\tilde{Y}_{n-1}(t)=\int_{n-1}^t e^s \sum_{j=1}^r
\bar{\sigma}_j(s)\,dB_j(s), \quad t\in [n-1,n].
\]
Clearly $\tilde{Y}_{n-1}$ is a continuous $\mathcal{F}^B$
martingale, and by \eqref{def.barsigma} we have
\[
\langle \tilde{Y}_{n-1}\rangle (t)=\int_{n-1}^t e^{2s}
\bar{\sigma}^2(s)\,ds, \quad t\in [n-1,n].
\]
Therefore there is an extension
$(\Omega_{n},\mathcal{F}_n,\mathbb{P}_n)$ of
$(\Omega,\mathcal{F},\mathbb{P})$ on which is defined a
one--dimensional Brownian  motion $\bar{B}_n=\{\bar{B}_n(t):n-1\leq
t\leq n;\mathcal{F}_n\}$ such that
\[
\tilde{Y}_{n-1}(t)=\int_{n-1}^t e^s  \bar{\sigma}(s)\,d\bar{B}_n(s),
\quad t\in [n-1,n].
\]
(cf.~\cite[Theorem 3.4.2]{KarShr:90}). Now define
\[
\bar{Y}_{n-1}(t)=\int_{n-1}^t e^s\|\sigma(s)\|_F\,d\bar{B}(s), \quad
t\in [n-1,n].
\]
Since $\bar{\sigma}(t)\leq \|\sigma(t)\|_F$ for all $t\geq 0$, by
applying a result of Hajek (cf.~e.g.,~\cite[Exercise 3.4.24]{KarShr:90})
we have that
\begin{equation} \label{eq.pluseps}
\mathbb{P}[V_0(n)>\epsilon]=\mathbb{P}[\tilde{Y}_{n-1}(n)>\epsilon
e^n]\leq 2 \mathbb{P}[\bar{Y}_{n-1}(n)\geq \epsilon e^n].
\end{equation}
Noting that $-\tilde{Y}_{n-1}$ is also a continuous martingale, by
applying Hajek's result once more, we have that
\[
\mathbb{P}[V_0(n)\leq -\epsilon]=\mathbb{P}[-\tilde{Y}_{n-1}(n)\geq
\epsilon e^n]\leq 2\mathbb{P}[\bar{Y}_{n-1}(n)\geq \epsilon e^n].
\]
Combining this estimate with  \eqref{eq.pluseps}, we get
\begin{equation} \label{eq.upboundV0n}
\mathbb{P}[|V_0(n)|>\epsilon]\leq 4 \mathbb{P}[\bar{Y}_{n-1}(n)\geq
\epsilon e^n].
\end{equation}
Now, we notice that $\bar{Y}_{n-1}(n)$ is a normally distributed
random variable with mean zero and variance
\[
\bar{v}(n)^2:=\int_{n-1}^n e^{2s}\|\sigma(s)\|_F^2\,ds.
\]
Notice that $e^{-2}\theta(n)^2\leq e^{-2n}\bar{v}^2(n)\leq
\theta(n)^2$, where
\[
\theta(n)^2=\int_{n-1}^n \|\sigma(s)\|_F^2\,ds.
\] 
Denote by $\Phi:\mathbb{R}\to \mathbb{R}$ the distribution of a
standard normal random variable i.e.,
\begin{equation} \label{def.Phi}
\Phi(x)=\frac{1}{\sqrt{2\pi}}\int_{-\infty}^x e^{-u^2/2}\,du, \quad x\in\mathbb{R}.
\end{equation}
Since $\Phi$ is increasing, we have
\begin{align*} 
\mathbb{P}[|V_0(n)|>\epsilon]&\leq 4 \left(1-
\Phi\left(\frac{\epsilon e^n}{\bar{v}(n)}\right)\right)
= 4 \left(1- \Phi\left(\frac{\epsilon}{e^{-n}\bar{v}(n)}\right)\right)\\
&\leq 4 \left(1- \Phi\left(\frac{\epsilon}{\theta(n)}\right)\right).
\end{align*}
Therefore, for every $\epsilon>\epsilon'$, by \eqref{eq.S1mixed}, \eqref{eq.intsignto0}
and the asymptotic estimate  
\begin{equation} \label{eq.mills}
\lim_{x\to\infty} \frac{1-\Phi(x)}{\frac{1}{x}e^{-x^2/2}}=\frac{1}{\sqrt{2\pi}},
\end{equation}
(cf.,~ e.g.~\cite[Problem 2.9.22]{KarShr:90})
it follows that
\[
\sum_{n=1}^\infty \mathbb{P}[|V_0(n)|\geq \epsilon]<+\infty.
\]
Thus by the Borel--Cantelli lemma, it follows that
$\limsup_{n\to\infty} |V_0(n)|\leq \epsilon$ a.s. for every
$\epsilon>\epsilon'$. Hence by \eqref{eq.YV0convrep}, we have that
\begin{equation} \label{eq.Y0bound}
\limsup_{n\to\infty} |Y_0(n)|\leq \epsilon\cdot \sum_{k=0}^\infty
e^{-k} = \epsilon \frac{1}{1-e^{-1}}, \quad \text{a.s.}
\end{equation}

Next let $t\in [n,n+1)$. Therefore, from \eqref{def.Y0} we have
\[
Y_0(t)=Y_0(n)e^{-(t-n)}+ e^{-t}\int_n^t e^s\sum_{j=1}^r
\bar{\sigma}_{j}(s)\,dB_j(s), \quad t\in [n,n+1).
\]
With $Z_0(n):=e^{-n}\max_{t\in [n,n+1]}  \left|\int_n^t
e^s\sum_{j=1}^r \bar{\sigma}_j(s)\,dB_j(s)\right|$ for $n\geq 1$, we
have
\begin{equation}\label{eq.supY0Y0nZ0n}
\max_{t\in [n,n+1]} |Y_0(t)|\leq |Y_0(n)|+ \max_{t\in [n,n+1]}
e^{-t}\left|\int_n^t e^s\sum_{j=1}^r
\bar{\sigma}_j(s)\,dB_j(s)\right| \leq |Y_0(n)|+ Z_0(n).
\end{equation}
Next we estimate $\mathbb{P}[Z_0(n)>\epsilon e]$. Fix $n\in
\mathbb{N}$. Now
\[
\mathbb{P}[Z_0(n)>\epsilon e]=\mathbb{P}\left[\max_{t\in [n,n+1]}
|\bar{Y}_n(t)|>\epsilon e e^n\right].
\]
Define $\tau(t):=\int_n^t e^{2s}\bar{\sigma}^2(s)\,ds$ for
$t\in[n,n+1]$. Therefore, by the martingale time change
theorem~\cite[Theorem V.1.6]{RevYor}, there exists a standard
Brownian motion $B^\ast_n$ such that
\begin{align*}
\mathbb{P}[Z_0(n)>\epsilon e] =\mathbb{P}\left[\max_{t\in [n,n+1]}
\left|B^\ast_n\left(\tau(t)\right)\right|>\epsilon e e^n\right]
=\mathbb{P}\left[\max_{u\in [0,\tau(n+1)]}
\left|B^\ast_n(u)\right|>\epsilon  e e^n \right].
\end{align*}
Notice now that $\tau(t)\leq \int_n^t e^{2s}\|\sigma(s)\|^2_F\,ds$,
so
\begin{align*}
\mathbb{P}[Z_0(n)>\epsilon e]
&\leq \mathbb{P}\left[\max_{u\in [0, \int_n^{n+1} e^{2s}\|\sigma(s)\|^2_F\,ds]} \left|B^\ast_n(u)\right|>\epsilon  e e^n \right]\\
&= \mathbb{P}\left[\max_{u\in [0, \bar{v}^2(n+1)]} \left|B^\ast_n(u)\right|>\epsilon  e e^n \right]\\
&\leq \mathbb{P}\left[\max_{u\in [0,\bar{v}^2(n+1)]}
B^\ast_n(u)>\epsilon   e^n e\right]
+\mathbb{P}\left[\max_{u\in [0,\bar{v}^2(n+1)]} -B^\ast_n(u)>\epsilon   e^n e\right]\\
&=\mathbb{P}\left[|B^\ast_n(\bar{v}^2(n+1))|>\epsilon e^n e\right]
+\mathbb{P}\left[ |B^{\ast\ast}_n(\bar{v}^2(n+1))|>\epsilon  e^n
e\right],
\end{align*}
where $B^{\ast\ast}_n=-B^\ast_n$ is a standard Brownian motion, and we have recalled that if $W$ is a 
standard Brownian motion that $\max_{s\in [0,t]} W(s)$ has the same distribution as $|W(t)|$. Therefore, as
$B^\ast_n(\bar{v}(n+1))$ is normally distributed with zero mean we
have
\begin{align*}
\mathbb{P}[Z_0(n)>\epsilon e]
&=2\mathbb{P}\left[|B^\ast_n(\bar{v}^2(n+1))|>\epsilon e e^n\right]
=4\mathbb{P}\left[B^\ast_n(\bar{v}^2(n+1))>\epsilon e e^n\right]\\
&=4\left(1-\Phi\left(\frac{\epsilon e
e^n}{\bar{v}(n+1)}\right)\right) =4\left(1-\Phi\left(\frac{\epsilon
e}{\sqrt{e^{-2n}\bar{v}^2(n+1)}}\right)\right).
\end{align*}
If we interpret $\Phi(\infty)=1$, this formula holds valid in the
case when $\bar{v}(n+1)=0$, because in this case $Z_0(n)=0$ a.s. Now
$e^{-2n}\bar{v}^2(n+1)=e^{-2n}\int_n^{n+1}
e^{2s}\|\sigma(s)\|^2_F\,ds\leq e^2 \theta^2(n).$
Since $\Phi$ is increasing, we have
\[
\mathbb{P}[Z_0(n)>\epsilon e] = 4\left( 1-\Phi\left(\frac{\epsilon
e}{\sqrt{e^{-2n}\tau(n+1)}}\right)\right)\leq 4\left( 1-
\Phi\left(\frac{\epsilon e}{e\theta(n)}\right)\right),
\]
so
\begin{equation} \label{eq.Z0nepsprob}
\mathbb{P}[Z_0(n)>\epsilon e] \leq 4\left(1-
\Phi\left(\frac{\epsilon}{\theta(n)}\right)\right).
\end{equation}
Therefore by \eqref{eq.S1mixed}, \eqref{eq.intsignto0}, \eqref{eq.mills} and \eqref{eq.Z0nepsprob} we
have $\sum_{n=1}^\infty \mathbb{P}[Z_0(n)>\epsilon e]<+\infty$ for all
$\epsilon>\epsilon'$. Therefore by the Borel--Cantelli Lemma, we
have that
\begin{equation}\label{eq.Z0nbound}
\limsup_{n\to\infty}  Z_0(n)\leq \epsilon e, \quad \text{a.s.}
\end{equation}
By \eqref{eq.Y0bound}, \eqref{eq.supY0Y0nZ0n} and
\eqref{eq.Z0nbound} we have
\[
\limsup_{n\to\infty} \max_{t\in [n,n+1]} |Y_0(t)| \leq
\limsup_{n\to\infty} |Y_0(n)| + \limsup_{n\to\infty} Z_0(n) \leq
\frac{1}{1-e^{-1}}\epsilon + e \epsilon,
\]
Therefore, letting $\epsilon\downarrow \epsilon'$ through the
rational numbers we have
\[
\limsup_{t\to\infty} |Y_0(t)| \leq (1/(1-e^{-1})+e)\epsilon'=:c_1,
\quad \text{a.s.},
\]
proving the result.
\end{proof}

Before proceeding with the final supporting lemma, we show that
whenever $S_h'(\epsilon)$ is finite, we must have
\begin{equation} \label{eq.intsigttp10}
\lim_{t\to\infty} \int_t^{t+1} \|\sigma(s)\|^2_F\,ds=0.
\end{equation}
\begin{lemma}
Suppose that $S_h'$ obeys \eqref{eq.thetaboundedh}. Then $\sigma$ obeys
\eqref{eq.intsigttp10}.
\end{lemma}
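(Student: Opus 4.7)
The plan is to leverage a fact already derived en route inside the proof of Lemma~\ref{lemma.Y0bounded}---namely the integer-indexed statement \eqref{eq.intsignto0} that $\int_{n-1}^n\|\sigma(s)\|_F^2\,ds\to 0$---and then to bridge from integer anchors to arbitrary real $t$ via a two-interval cover.

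First I would invoke Proposition~\ref{prop.ShS1} to transfer the hypothesis that $S_h'$ obeys \eqref{eq.thetaboundedh} into the corresponding statement for $S_1'$. Picking any $\epsilon>\epsilon'$ with $S_1'(\epsilon)<\infty$ forces the $n$-th summand of $S_1'$ to vanish. Setting $u_n:=\int_{n-1}^n\|\sigma(s)\|_F^2\,ds$, this summand is $\sqrt{u_n}\exp(-\epsilon^2/(2u_n))$. The map $u\mapsto \sqrt{u}\exp(-\epsilon^2/(2u))$ tends to $0$ only as $u\downarrow 0$: it is continuous and strictly positive on $(0,\infty)$, hence bounded below by a positive constant on any compact subinterval of $(0,\infty)$, and it diverges as $u\to\infty$. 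So vanishing of the summand yields $u_n\to 0$, which is precisely \eqref{eq.intsignto0}.

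To upgrade this integer limit to the continuous statement \eqref{eq.intsigttp10}, I would take $n=\lfloor t\rfloor$, observe that $[t,t+1]\subseteq[n,n+2]$, and bound
\[
\int_t^{t+1}\|\sigma(s)\|_F^2\,ds \;\leq\; \int_n^{n+1}\|\sigma(s)\|_F^2\,ds + \int_{n+1}^{n+2}\|\sigma(s)\|_F^2\,ds,
\]
both terms on the right tending to zero as $t\to\infty$ by the integer result. There is essentially no substantive obstacle; the lemma functions as a small continuous-time refinement of the integer statement already harvested inside the proof of Lemma~\ref{lemma.Y0bounded}, and the only micro-step requiring care is the asymptotic behaviour of $u\mapsto\sqrt{u}\exp(-\epsilon^2/(2u))$ used to pass from summability of $S_1'(\epsilon)$ to $u_n\to 0$.
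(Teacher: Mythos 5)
Your argument is correct and follows essentially the same route as the paper: finiteness of $S'(\epsilon)$ at a single $\epsilon$ forces the block integrals to vanish (since $u\mapsto\sqrt{u}\,e^{-\epsilon^2/(2u)}$ is bounded away from zero off any neighbourhood of the origin), and then $[t,t+1]$ is covered by two consecutive unit intervals exactly as in the paper's proof. The only cosmetic difference is your appeal to Proposition~\ref{prop.ShS1} to pass from step $h$ to step $1$ (which, being stated in the direction $S_1'\Rightarrow S_h'$, you should note is reversed via the exhaustive trichotomy), whereas the paper works directly with the $h$-blocks and asserts the unit-interval statement; this does not change the substance.
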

\begin{proof}
By \eqref{eq.thetaboundedh}, there exists $\epsilon>0$ such that
$S_h'(\epsilon)<+\infty$. 
Therefore, it follows that $\int_{nh}^{(n+1)h} \|\sigma(s)\|^2_F\,ds\to 0$ as $n\to\infty$. This implies 
\[
\lim_{n\to\infty}\int_{n}^{n+1} \|\sigma(s)\|^2_F\,ds= 0.
\] 
For every $t>0$, there exists $n(t)\in\mathbb{N}$ such that $n(t)\leq
t<n(t)+1$. Hence
\begin{align*}
\int_t^{t+1}\|\sigma(s)\|^2_F\,ds &\leq \int_{n(t)}^{t+1}
\|\sigma(s)\|^2_F\,ds
=\int_{n(t)}^{n(t)+1} \|\sigma(s)\|^2_F\,ds+ \int_{n(t)+1}^{t+1} \|\sigma(s)\|^2_F\,ds\\
&\leq \int_{n(t)}^{n(t)+1} \|\sigma(s)\|^2_F\,ds+ \int_{n(t)+1}^{n(t)+2} \|\sigma(s)\|^2_F\,ds. 
\end{align*}
Since $n(t)\to\infty$ as $t\to\infty$ and  $\int_{n}^{n+1} \|\sigma(s)\|^2_F\,ds\to 0$  as
$n\to\infty$, taking limits yields \eqref{eq.intsigttp10}.
\end{proof}

Before we can show that $W$ is bounded, we must first prove that
\begin{equation}\label{eq.Zfiniteliminf}
\liminf_{t\to\infty} Z(t)<+\infty,\quad\text{a.s.}
\end{equation}
\begin{lemma} \label{lemma.Znottoinfty}
Suppose that $f$ obeys \eqref{eq.fglobalunperturbed} 
and \eqref{eq.fasybounded}. Suppose that $\sigma$ 
obeys \eqref{eq.sigmacns} and that $S_h'$ obeys
\eqref{eq.thetaboundedh}. Suppose that $X$ is a continuous adapted process 
which obeys \eqref{eq.mainstocheqn}. Let $Z$ be the unique continuous adapted process corresponding to $X$ which obeys \eqref{def.ZcompSDE}.  
Then $Z$ also obeys \eqref{eq.Zfiniteliminf}.
\end{lemma}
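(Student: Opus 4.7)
\textbf{Proof Plan for Lemma~\ref{lemma.Znottoinfty}.} Our strategy is by contradiction: assume that the event
\[
A := \{\omega : \liminf_{t\to\infty} Z(t,\omega) = +\infty\}
\]
has positive probability, and derive a contradiction by exploiting the random ordinary differential equation \eqref{def.WcompSDE} for $W = Z - Y_0$. The plan rests on three pillars already established: first, $\phi_1$ obeys $\phi_1(x)\to\infty$ as $x\to\infty$ by Lemma~\ref{lemma.phi1loclip} (this is where the strengthened condition \eqref{eq.fasybounded} is used); second, $Y_0$ is bounded above by a deterministic constant $c_1$ in the limit superior by Lemma~\ref{lemma.Y0bounded}; third, $\int_{t}^{t+1}\|\sigma(s)\|_F^2\,ds\to 0$ as $t\to\infty$, which we showed in the preceding lemma.

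First I would intersect $A$ with the a.s.\ event $\Omega^\ast$ on which $\limsup_{t\to\infty}|Y_0(t)|\le c_1$. On $A\cap \Omega^\ast$ we have $Z(t)\to +\infty$, hence $W(t)=Z(t)-Y_0(t)\to+\infty$. Next I would pick a large $R$ so that $\phi_1^\ast(R):=\inf_{z\ge R}\phi_1(z)$ is as large as we like; this is legitimate because continuity of $\phi_1$ together with $\phi_1(x)\to\infty$ forces $\phi_1^\ast(R)\to\infty$ as $R\to\infty$. Specifically, I would demand
\[
\phi_1^\ast(R) > c_1 + 2 + \epsilon
\]
for a fixed small $\epsilon\in(0,1)$. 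Using $\int_t^{t+1}\|\sigma\|_F^2\,ds\to 0$, choose a \emph{deterministic} $T_0$ with $\int_t^{t+1}\|\sigma(s)\|_F^2\,ds\le \epsilon$ for all $t\ge T_0$, and for $\omega\in A\cap\Omega^\ast$ choose $T(\omega)\ge T_0$ with $Z(s,\omega)\ge R\vee 1$ and $|Y_0(s,\omega)|\le c_1+1$ for all $s\ge T(\omega)$.

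The key estimate: on $[T(\omega),\infty)$, since $Z(s)\ge 1$, we have
\[
\frac{\|\sigma(s)\|_F^2+e^{-s}}{Z(s)}\le \|\sigma(s)\|_F^2+e^{-s},
\]
so integrating \eqref{def.WcompSDE} from $T:=T(\omega)$ to $t$ gives
\[
W(t)-W(T)\le -\phi_1^\ast(R)(t-T)+\int_T^t\|\sigma(s)\|_F^2\,ds+e^{-T}+(c_1+1)(t-T).
\]
For $t\ge T$, the $\|\sigma\|_F^2$ integral is bounded by $\epsilon\bigl(\lceil t-T\rceil+1\bigr)\le \epsilon(t-T)+2\epsilon$ by partitioning into unit intervals and using the choice of $T_0$. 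Combining,
\[
W(t)-W(T)\le \bigl(-\phi_1^\ast(R)+c_1+1+\epsilon\bigr)(t-T)+e^{-T}+2\epsilon.
\]
By our choice of $R$ the slope is strictly negative (at most $-1$), so $W(t)\to-\infty$ as $t\to\infty$, contradicting $W(t)\to+\infty$ on $A\cap\Omega^\ast$. Hence $\mathbb{P}[A]=0$, which is exactly \eqref{eq.Zfiniteliminf}.

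The main conceptual obstacle is handling the term $\|\sigma(t)\|_F^2/Z(t)$, since $\|\sigma(t)\|_F^2$ need not tend to zero pointwise; this is resolved cleanly by averaging the integrated statement $\int_t^{t+1}\|\sigma\|_F^2\to 0$, together with the crucial observation that $Z(s)\ge 1$ eventually on the hypothetical bad event (so dividing by $Z$ is harmless). The mild technicality that $T(\omega)$ is random but $T_0$ is deterministic causes no trouble because the $\|\sigma\|_F^2$ control is purely deterministic and all other bounds only improve for $\omega\in A\cap\Omega^\ast$.
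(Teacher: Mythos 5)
Your proof is correct, and it takes a genuinely different route from the paper's. The paper argues on the $Z$--equation \eqref{def.ZcompSDE} directly: it divides the integrated equation by $t$, shows $M_2(t)/t\to 0$ a.s.\ (either because $\langle M_2\rangle$ converges, or by the strong law of large numbers for martingales combined with the Ces\`aro limit $\frac{1}{t}\int_0^t\|\sigma(s)\|_F^2\,ds\to 0$ borrowed from the proof of Theorem~\ref{theorem.Xiffsigma}), and then concludes on the bad event that $\frac1t\int_0^t\phi_1(Z(s))\,ds\to+\infty$ forces $Z(t)/t\to-\infty$, contradicting positivity of $Z$. You instead work pathwise with the random ODE \eqref{def.WcompSDE} for $W=Z-Y_0$: you import the deterministic bound $\limsup_{t\to\infty}|Y_0(t)|\le c_1$ from Lemma~\ref{lemma.Y0bounded} and the unit--interval decay \eqref{eq.intsigttp10} from the preceding lemma, then beat the bounded terms with the eventually large negative drift $-\phi_1^\ast(R)$ to get $W(t)\to-\infty$, contradicting $W(t)\to+\infty$ on the bad event. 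Your estimates check out: $Z\ge 1$ makes the division by $Z$ harmless, the unit--interval partition gives $\int_T^t\|\sigma\|_F^2\,ds\le\epsilon(t-T)+2\epsilon$, and $\phi_1^\ast(R)\to\infty$ follows from \eqref{eq.phi1pos} alone. The trade--off is that your argument leans on the heavier Lemma~\ref{lemma.Y0bounded} (Gaussian tail estimates and Borel--Cantelli), which the paper's proof of this particular lemma avoids, whereas the paper needs the martingale strong law and the Ces\`aro computation; since Lemma~\ref{lemma.Y0bounded} is needed anyway for Lemma~\ref{lemma.Wbounded}, your version costs nothing extra globally and is stylistically consistent with the pathwise estimates used there, while the paper's version keeps this lemma independent of the $Y_0$ analysis.
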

\begin{proof}
Note that if $f$ obeys \eqref{eq.fasybounded}, then by Lemma~\ref{lemma.phi1loclip} (specifically \eqref{eq.phi1pos}), $\phi_1$ given by
\eqref{def.phi1} satisfies $\lim_{x\to\infty} \phi_1(x)=+\infty$.
Using \eqref{def.ZcompSDE}, we have
\begin{equation} \label{eq.Zboundedmaster}
\frac{Z(t)}{t}=\frac{1+\|\xi\|}{t} -\frac{1}{t}\int_0^t
\phi_1(Z(s))\,ds + \frac{1}{t}\int_0^t
\frac{\|\sigma(s)\|^2_F+e^{-s}}{Z(s)}\,ds + \frac{M_2(t)}{t},
\end{equation}
where $M_2$ is the continuous martingale given by
\[
M_2(t)=\sum_{j=1}^r \int_0^t \bar{\sigma}_j(s)\,dB_j(s), \quad
\text{a.s.}
\]
Using \eqref{def.barsigma} we get
\[
\langle M_2\rangle(t)= \int_0^t \bar{\sigma}^2(s)\,ds\leq \int_0^t
\|\sigma(s)\|_F^2\,ds,
\]
and in the case when $S_h'(\epsilon)$ is finite, we may appeal to the
proof of Theorem~\ref{theorem.Xiffsigma}, which shows that
\eqref{eq.cesarosigma0} holds. On the event $A$ for which $\langle
M_2\rangle(t)$ tends to a finite limit as $t\to\infty$, we have that
$M_2(t)$ converges to a finite limit, in which case $M_2(t)/t\to 0$
as $t\to\infty$ on $A$. On $\bar{A}$, we have that $\langle
M_2\rangle(t)\to\infty$ as $t\to\infty$, so by the strong law of
large numbers for martingales, we have
\begin{align*}
\limsup_{t\to\infty} \frac{|M_2(t)|}{t}&\leq \limsup_{t\to\infty}
\frac{M_2(t)}{\langle M_2\rangle(t)}\limsup_{t\to\infty}
\frac{\langle M_2\rangle(t)}{t}\\
&= \limsup_{t\to\infty} \frac{M_2(t)}{\langle
M_2\rangle(t)}\limsup_{t\to\infty} \frac{1}{t}\int_0^t
\|\sigma(s)\|^2_F\,ds=0,
\end{align*}
so a.s. we have
\begin{equation} \label{eq.M2divt0}
\lim_{t\to\infty} \frac{M_2(t)}{t}=0,\quad\text{a.s.}
\end{equation}
Now define the event $A_1$ by  $A_1:=\{\omega:\lim_{t\to\infty}
Z(t,\omega)=\infty\}$ and suppose that $\mathbb{P}[A_1]>0$. By
Lemma~\ref{lemma.Zpos} we note that there is an a.s. event
$\Omega_3=\{\omega: Z(t,\omega)>0\text{ for all $t\geq 0$}\}$. Let
$A_2=A_1\cap \Omega_1\cap \Omega_2$, where $\Omega_1$ is the a.s.
event in \eqref{eq.M2divt0}. Thus $\mathbb{P}[A_2]>0$. Then for each
$\omega\in A_2$, we have that $\lim_{t\to\infty}
\phi_1(Z(t,\omega))=+\infty$, and so
\begin{equation} \label{eq.phiZdivt0}
\lim_{t\to\infty} \frac{1}{t}\int_0^t \phi_1(Z(s))\,ds=+\infty, \quad
\text{on $A_2$}.
\end{equation}
For each $\omega\in A_2$, there is a $T^\ast(\omega)>0$ such that
$Z(t,\omega)\geq 1$ for all $t\geq T^\ast(\omega)$. Therefore, for
$t\geq T^\ast(\omega)$, we have the bound
\[
\frac{1}{t}\int_0^t \frac{\|\sigma(s)\|^2_F+e^{-s}}{Z(s)}\,ds \leq
\frac{1}{t}\int_0^{T^\ast} \frac{\|\sigma(s)\|^2_F+e^{-s}}{Z(s)}\,ds
+ \frac{1}{t}\int_{T^\ast}^t \{\|\sigma(s)\|^2_F+e^{-s}\}\,ds.
\]
Since $t\mapsto e^{-t}$ is integrable, and $\sigma$ obeys
\eqref{eq.cesarosigma0}, it follows that the second term on the
right--hand side has a zero limit as $t\to\infty$. To deal with the
first term, note that the continuity of $Z$ on the compact interval
$[0,T^\ast]$ and the positivity of $Z$ implies there is a
$T_1^\ast\in [0,T^\ast]$ such that $\inf_{t\in [0,T^\ast]}
Z(t)=Z(T_1^\ast)>0$, and so the first term also tends to zero as
$t\to\infty$. Thus the third term on the righthand side of
\eqref{eq.Zboundedmaster} tends to zero as $t\to\infty$ on $A_2$.
Noting this zero limit, we take the limit as $t\to\infty$ in
\eqref{eq.Zboundedmaster}, and using \eqref{eq.phiZdivt0} and
\eqref{eq.M2divt0}, arrive at
\[
\lim_{t\to\infty} \frac{Z(t,\omega)}{t}=-\infty, \text{ for each
$\omega\in A_2$.}
\]
which implies that $Z(t,\omega)\to-\infty$ as $t\to\infty$ for each
$\omega\in A_2$. But since $Z(t,\omega)>0$ for all $t\geq 0$ for
each $\omega\in A_2$, we have a contradiction, proving the result.
\end{proof}
Finally, we are in a position to show that the process $W$ defined as the unique solution of the random differential equation \eqref{def.WcompSDE} 
corresponding to a solution $X$ of \eqref{eq.mainstocheqn}, is bounded by a deterministic constant almost surely.  
\begin{lemma} \label{lemma.Wbounded}
Suppose that $f$ obeys \eqref{eq.fglobalunperturbed} 
and \eqref{eq.fasybounded}. Suppose that $\sigma$
obeys \eqref{eq.sigmacns} and that $S_h'$ obeys
\eqref{eq.thetaboundedh}. Suppose that $X$ is a continuous adapted process obeying \eqref{eq.mainstocheqn}. 
Let $W$ be the unique continuous adapted process corresponding to $X$ which obeys \eqref{def.WcompSDE}. Then there is a 
deterministic $c_2>0$ such that 
\[
\limsup_{t\to\infty} |W(t)|\leq c_2, \quad \text{a.s.}
\]
\end{lemma}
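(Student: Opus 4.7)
Since $W = Z - Y_0$, and Lemma~\ref{lemma.Y0bounded} already provides a deterministic $c_1$ with $\limsup_{t\to\infty}|Y_0(t)|\leq c_1$ a.s., while $Z>0$ by Lemma~\ref{lemma.Zpos} immediately yields $\liminf_{t\to\infty} W(t)\geq -c_1$ a.s., the remaining task is to produce a deterministic $c_2'$ such that $\limsup_{t\to\infty}W(t)\leq c_2'$ a.s.; then $c_2:=\max(c_1,c_2')$ will suffice. I proceed by pathwise analysis of the random ODE \eqref{def.WcompSDE}.

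Exploiting $\lim_{x\to\infty}\phi_1(x)=+\infty$ from Lemma~\ref{lemma.phi1loclip}, choose a barrier $W_{**}>0$ large enough that $W_{**}\geq 2(c_1+1)$ and $\inf_{x\geq W_{**}/2}\phi_1(x)\geq 2c_1+3$. Using $\int_t^{t+1}\|\sigma(s)\|_F^2\,ds\to 0$ as $t\to\infty$ (the lemma preceding Lemma~\ref{lemma.Znottoinfty}), pick a \emph{deterministic} $T_*\geq 0$ with $\int_t^{t+1}\|\sigma\|_F^2\,ds\leq W_{**}/8$ and $e^{-t}\leq W_{**}/8$ for all $t\geq T_*$. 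On the a.s.\ event where $|Y_0|$ is eventually bounded by $c_1+1$, let $T_0(\omega)\geq T_*$ be a time after which this bound holds.

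For $t\geq T_0$ at which $W(t)\geq W_{**}$, the identity $Z=W+Y_0$ gives $Z(t)\geq W_{**}-(c_1+1)\geq W_{**}/2$, and \eqref{def.WcompSDE} then yields
\[
W'(t)\leq -\inf_{x\geq W_{**}/2}\phi_1(x)+\frac{2(\|\sigma(t)\|_F^2+e^{-t})}{W_{**}}+(c_1+1)\leq -(c_1+2)+\frac{2(\|\sigma(t)\|_F^2+e^{-t})}{W_{**}}.
\]
Integrating this bound over any unit interval $[a,a+1]\subset [T_0,\infty)$ on which $W\geq W_{**}$ gives $W(a+1)-W(a)\leq -(c_1+2)+1/2<0$. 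Iteration precludes $W\geq W_{**}$ throughout $[T_0,\infty)$ (it would force $W\to-\infty$), so there is a first $T_1(\omega)\geq T_0$ with $W(T_1)\leq W_{**}$; the same iteration forces every subsequent maximal excursion $(a,b)\subset [T_1,\infty)$ with $W(a)=W(b)=W_{**}$ and $W>W_{**}$ on $(a,b)$ to satisfy $b-a\leq 1$. On such an excursion, discarding the non-positive $-\phi_1(Z)$ term leaves $W'(t)\leq 2(\|\sigma\|_F^2+e^{-t})/W_{**}+(c_1+1)$, and integrating from $a$ to $t\in[a,b]$ produces $W(t)-W_{**}\leq 1/2+(c_1+1)=c_1+3/2$.

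For $t\geq T_1(\omega)$ we therefore have $W(t)\leq W_{**}$ outside excursions and $W(t)\leq W_{**}+c_1+3/2$ inside excursions, so $\limsup_{t\to\infty}W(t)\leq W_{**}+c_1+3/2=:c_2'$ a.s., completing the proof with $c_2:=\max(c_1,c_2')$. The main technical obstacle is that $\|\sigma(t)\|_F^2$ need not be pointwise bounded, so one cannot, for instance, reason at a local maximum of $W$ via $W'(t^*)=0$. The remedy is to work on unit-length intervals and exploit $\int_t^{t+1}\|\sigma\|_F^2\,ds\to 0$: this vanishing both forces each excursion of $W$ above $W_{**}$ to terminate within a unit of time and limits how high $W$ can climb during such a short excursion.
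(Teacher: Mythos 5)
Your proof is correct: the differential inequality above the barrier is valid (for $W(t)\geq W_{**}\geq 2(c_1+1)$ one has $Z(t)=W(t)+Y_0(t)\geq W_{**}/2$, so $W'(t)\leq -\inf_{x\geq W_{**}/2}\phi_1(x)+2(\|\sigma(t)\|_F^2+e^{-t})/W_{**}+(c_1+1)$), the unit-interval integrals are controlled exactly as you claim by \eqref{eq.intsigttp10} and the choice of $T_*$, and the excursion bookkeeping (length at most one, overshoot at most $1/2+(c_1+1)$) is sound, yielding a deterministic bound since $W_{**}$ depends only on $c_1$ and $\phi_1$. The underlying estimates are the same as the paper's: both arguments rest on Lemma~\ref{lemma.Y0bounded}, on $\int_t^{t+1}\|\sigma(s)\|_F^2\,ds\to 0$, on $\phi_1(x)\to\infty$ from Lemma~\ref{lemma.phi1loclip}, and on comparing the drift with the non-autonomous terms over time windows of length at most one; indeed your two mechanisms (forced descent over a full unit interval, limited climb over an interval shorter than one unit) mirror the paper's two cases $t_2-t_1'\geq 1$ and $t_2-t_1'<1$ in its contradiction argument with the two levels $M/2$ and $M$. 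The organization differs, though, in a way that buys something: the paper argues by contradiction on the event $\{\limsup_{t\to\infty}W(t)>M\}$ and needs Lemma~\ref{lemma.Znottoinfty} (that $\liminf_{t\to\infty}Z(t)<\infty$ a.s.) to produce the crossing times $t_1,t_1',t_2$, whereas your single-barrier excursion argument derives recurrence below $W_{**}$ directly from the iterated unit-interval decrease, so Lemma~\ref{lemma.Znottoinfty} is not needed at all, and you obtain an explicit bound $W_{**}+c_1+3/2$ rather than an unquantified $M$. The paper's version, on the other hand, isolates the "no explosion of $Z$ in the limit" statement as a separate lemma, which it reuses conceptually elsewhere; your route is slightly more self-contained and direct for this particular lemma.
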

\begin{proof}
We have by Lemma~\ref{lemma.Y0bounded} that $\limsup_{t\to\infty}
|Y_0(t)|\leq c_1$, a.s. From this fact and \eqref{eq.intsigttp10},
it follows that for every $\epsilon>0$ there exists a
$T(\omega,\epsilon)>0$ such that
\begin{equation}\label{eq.Y0boundintbound}
|Y_0(t,\omega)|\leq c_1+1:=\bar{Y}, \quad \int_{t-1}^t
\left\{\|\sigma(s)\|^2_F + e^{-s}\right\}\,ds<1, \quad t\geq
T(\epsilon,\omega).
\end{equation}
Suppose this holds on the a.s. event $\Omega_1$. By \eqref{eq.phi1pos} and \eqref{def.phi1} 
we have that
$\phi_1(x)\to\infty$ as $x\to\infty$. Therefore, we can choose $M>0$
so large that
\begin{equation} \label{eq.Mconditions}
\frac{M}{2}\geq 2\bar{Y}+1, \quad \inf_{x\geq M/2-\bar{Y}}
\phi_1(x)>\frac{1}{\bar{Y}+1}+\bar{Y}+1.
\end{equation}

By Lemma~\ref{lemma.Znottoinfty}, there is an a.s. event $\Omega_2$
such that $\Omega_2=\{\omega:\liminf_{t\to\infty}
Z(t,\omega)<+\infty\}$. Since $|Y_0|$ has a finite limsup on
$\Omega_2$, if follows that $\liminf_{t\to\infty}
\|W(t,\omega)\|<+\infty$ on $\Omega_1\cap\Omega_2$. Next suppose
there is an event $A_3=\{\omega:\limsup_{t\to\infty}
W(t,\omega)>M\}$ for which $\mathbb{P}[A_3]>0$. Let
$A_4=A_3\cap\Omega_2\cap\Omega_3$. Notice that
$\liminf_{t\to\infty} W(t)=\liminf_{t\to\infty} Z(t)+Y_0(t)\geq
\liminf_{t\to\infty} Y_0(t)\geq -c_1$, so we do not need to consider
the absolute value of $W$ in the definition of $A_3$. Suppose that
$\omega\in A_4$. It then follows that there exists $t_1>T(\epsilon)$
such that $t_1=\inf\{t>T(\epsilon): W(t)=M/2\}$ and a $t_2>t_1$ such
that $t_2=\inf\{t>t_1: W(t)=M\}$. It also follows that there is
$t_1'\in [t_1,t_2)$ such that $t_1'=\sup\{t>t_1:W(t)=M/2\}$.

Suppose first that $t_2-t_1'\geq 1$. Then $t_2-1\geq t_1'\geq
t_1>T(\epsilon)$. Define $t_3=t_2-1$. Then $M>W(t_3)>M/2$. Hence
\begin{align*}
M-W(t_3)&=W(t_2)-W(t_3)\\
&=-\int_{t_2-1}^{t_2}\phi_1(W(s)+Y_0(s))\,ds + \int_{t_2-1}^{t_2}
\left\{\frac{e^{-s}+\|\sigma(s)\|^2_F}{W(s)+Y_0(s)} +
Y_0(s)\right\}\,ds.
\end{align*}
Since $W(t)>M/2$ and $|Y_0(t)|\leq \bar{Y}$ for all $t\in
[t_2-1,t_2]$, we have that $W(t)+Y_0(t)\geq M/2-\bar{Y}>0$. Thus
$\phi_1(W(t)+Y(t))\geq \inf_{x\geq M/2-\bar{Y}} \phi_1(x)$. Using these
estimates leads to
\begin{align*}
M-W(t_3) &\leq -\int_{t_2-1}^{t_2} \inf_{x\geq M/2-\bar{Y}} \phi_1(x)
\,ds
+ \int_{t_2-1}^{t_2} \left\{\frac{e^{-s}+\|\sigma(s)\|^2_F}{M/2-\bar{Y}} + \bar{Y}\right\}\,ds\\
&= -\inf_{x\geq M/2-\bar{Y}} \phi_1(x) + \frac{1}{M/2-\bar{Y}}
\int_{t_2-1}^{t_2} \left\{e^{-s}+\|\sigma(s)\|^2_F\right\}\,ds +
\bar{Y}.
\end{align*}
Using the fact that $t_2-1>T(\epsilon)$, we may use the second
condition in \eqref{eq.Y0boundintbound}, the first condition in
\eqref{eq.Mconditions} and then the  last condition in
\eqref{eq.Mconditions} to get
\begin{align*}
0<M-W(t_3) &\leq -\inf_{x\geq M/2-\bar{Y}} \phi_1(x)
+ \frac{1}{M/2-\bar{Y}} + \bar{Y}\\
&\leq -\inf_{x\geq M/2-\bar{Y}} \phi_1(x) + \frac{1}{\bar{Y}+1} +
\bar{Y}<0,
\end{align*}
a contradiction.

Suppose on the other hand that $t_2-t_1'<1$. Once again, for all
$t\in (t_1',t_2)$ we have $M/2<W(t)<M$ with $W(t_1')=M/2$ and
$W(t_2)=M$. Then, as $\phi_1(x)\geq 0$ for all $x\geq 0$, we have
\begin{align*}
M/2&=W(t_2)-W(t_1')\\
&=-\int_{t_1'}^{t_2} \phi_1(Z(s))\,ds + \int_{t_1'}^{t_2} \frac{e^{-s}+\|\sigma(s)\|^2_F}{W(s)+Y_0(s)}\,ds + \int_{t_1'}^{t_2} Y_0(s)\,ds\\
&\leq \int_{t_1'}^{t_2}
\frac{e^{-s}+\|\sigma(s)\|^2_F}{W(s)+Y_0(s)}\,ds + \int_{t_1'}^{t_2}
|Y_0(s)|\,ds.
\end{align*}
Now, for all $t\in [t_1',t_2]$ we have that $W(t)\geq M/2$ and
$|Y_0(t)|\leq \bar{Y}$, so $W(t)+Y_0(t)\geq M/2-\bar{Y}>0$. Using
these estimates, and then the assumption that $t_2-t_1'<1$, we get
\begin{align*}
M/2
&\leq \int_{t_1'}^{t_2} \frac{e^{-s}+\|\sigma(s)\|^2_F}{W(s)+Y_0(s)}\,ds + \int_{t_1'}^{t_2} |Y_0(s)|\,ds\\
&\leq \frac{1}{M/2-\bar{Y}}\int_{t_1'}^{t_2} \{ e^{-s}+\|\sigma(s)\|^2_F\}\,ds + \int_{t_1'}^{t_2} \bar{Y}\,ds\\
&\leq \frac{1}{M/2-\bar{Y}}\int_{t_2-1}^{t_2} \{
e^{-s}+\|\sigma(s)\|^2_F\}\,ds + \bar{Y}.
\end{align*}
Finally, we notice that $t_2> t_1> T(\epsilon)$, so we may use the
second estimate in \eqref{eq.Y0boundintbound} to get $M/2 \leq
1/(M/2-\bar{Y}) + \bar{Y}$. Since $M/2>\bar{Y}$, this rearranges to
give $(M/2-\bar{Y})^2\leq 1$ or $M/2-\bar{Y}\leq 1$. This is
$M/2\leq \bar{Y}+1$. But as $\bar{Y}>0$, this contradicts the second
condition in  \eqref{eq.Mconditions}, i.e, $M/2\geq 2\bar{Y}+1$.
 \end{proof}
The proof of the upper bound on $\limsup_{t\to\infty}\|X(t)\|$ in part (B) of Theorem~\ref{theorem.Xclassify} is now immediate. 
It follows from Lemma
\ref{lemma.XZcomp} that
\[
\|X(t)\|\leq Z(t)=W(t)+Y_0(t), \quad t\geq 0,
\]
where $W$ and $Y_0$ are given by \eqref{def.WcompSDE} and
\eqref{def.Y0} respectively. By Lemma~\ref{lemma.Y0bounded}, we have
that $\limsup_{t\to\infty}\|Y_0(t)\|\leq c_1$ a.s. Also by
Lemma~\ref{lemma.Wbounded}, we may conclude that 
$\limsup_{t\to\infty}\|W(t)\|\leq c_2$ a.s. Notice that both $c_1$
and $c_2$ are deterministic bounds. Therefore, it follows that
\[
\limsup_{t\to\infty} \|X(t)\|\leq c_1+c_2, \quad \text{a.s.},
\]
as required.

\subsection{Proof that limit inferior is zero in part (B) of Theorem~\ref{theorem.Xclassify}}
It remains to prove the second part of (B) in Theorem~\ref{theorem.Xclassify}, namely that
\[
\liminf_{t\to\infty} \|X(t)\|=0, \quad\text{a.s.}
\]
We have already shown that $t\mapsto \|X(t)\|$ is bounded.
Furthermore, since $S_h'(\epsilon)<+\infty$ for all
$\epsilon>\epsilon'$, we can prove as in the proof of
Theorem~\ref{theorem.Xiffsigma} that \eqref{eq.cesarosigma0}
holds i.e.,
\[
\lim_{t\to\infty} \frac{1}{t}\int_0^t \|\sigma(s)\|^2_F\,ds=0.
\]
 Recall from \eqref{eq.Xforboundedtoo} that we have the
representation
\begin{equation*}
\|X(t)\|^2=\|\xi\|^2 - \int_0^t 2\langle
X(s),f(X(s))\rangle\,ds+\int_0^t \|\sigma(s)\|^2_F\,ds + M(t),
\quad t\geq 0,
\end{equation*}
where $M$ is the local (scalar) martingale given by
\eqref{eq.Mforboundedtoo} i.e.,
\begin{equation*}
M(t)=2\sum_{j=1}^r \int_0^t \sum_{i=1}^d
X_i(s)\sigma_{ij}(s)\,dB_j(s), \quad t\geq 0.
\end{equation*}
The quadratic variation of $M$ is given by
\[
\langle M\rangle (t)=4\sum_{j=1}^r \int_0^t \left(\sum_{i=1}^d
X_i(s)\sigma_{ij}(s)\right)^2\,ds,
\]
and so by the Cauchy--Schwarz inequality, we have
\[
\langle M\rangle (t)\leq 4\sum_{j=1}^r \int_0^t \sum_{i=1}^d
X_i^2(s)\sum_{i=1}^d\sigma_{ij}^2(s) \,ds \leq   4\int_0^t
\|X(s)\|_2^2 \|\sigma(s)\|^2_F \,ds.
\]
Therefore, as $t\mapsto \|X(t)\|$ is a.s. bounded, we have
\[
\lim_{t\to\infty} \frac{1}{t}\langle M\rangle(t)=0, \quad
\text{a.s.}
\]
In the case that $\langle M\rangle$ converges, we have that $M$
tends to a finite limit and so
\[
\lim_{t\to\infty} \frac{1}{t}M(t)=0.
\]
If, on the other hand  $\langle M\rangle(t)\to\infty$ as
$t\to\infty$, by the strong law of large numbers for martingales, we
have
\[
\lim_{t\to\infty} \frac{1}{t}M(t)=\lim_{t\to\infty}
\frac{M(t)}{\langle M\rangle(t)}\cdot \frac{\langle
M\rangle(t)}{t}=0.
\]
Using the fact that  $t\mapsto \|X(t)\|$ is bounded, we have
$\|X(t)\|^2/t\to 0$ as $t\to\infty$. Therefore, by rearranging
\eqref{eq.Xforboundedtoo}, dividing by $t$ and letting $t\to\infty$,
we get \eqref{eq.aveXfXintto0}, as claimed in part (B) of Theorem~\ref{theorem.Xclassify}.

To show that the liminf is zero a.s., we suppose to the contrary that there is an event $A_1$ of positive probability
such that
\[
A_1=\{\omega:\liminf_{t\to\infty} \|X(t,\omega)\|>0\}.
\]
Since $X$ is bounded, it follows that for a.a. $\omega\in A_1$,
there are $\bar{X}(\omega), \bar{x}(\omega)\in (0,\infty)$ such that
\[
\liminf_{t\to\infty} \|X(t,\omega)\|=\bar{x}(\omega), \quad
\limsup_{t\to\infty} \|X(t,\omega)\|=\bar{X}(\omega).
\]
Thus, there exists $T(\omega)>0$ such that
\[
\frac{\bar{x}(\omega)}{2}\leq \|X(t,\omega)\|\leq 2\bar{X}(\omega),
\quad t\geq T(\omega).
\]
By the continuity of $f$ and the fact that $\langle x,f(x)\rangle
>0$ for all $x\neq 0$, it follows that for any $0<a\leq b<+\infty$
\[
\inf_{\|x\|\in [a,b]} \langle x,f(x)\rangle = L(a,b)>0.
\]
Hence for $t\geq T(\omega)$ we have
\[
\langle X(t,\omega),f(X(t,\omega))\rangle \geq
L\left(\frac{\bar{x}(\omega)}{2},
2\bar{X}(\omega)\right)=:\lambda(\omega)>0.
\]
Hence for $t\geq T(\omega)$ we have
\[
\frac{1}{t} \int_0^t \langle X(s,\omega),f(X(s,\omega))\rangle\,ds
\geq \frac{1}{t}\int_{T(\omega)}^t \langle
X(s,\omega),f(X(s,\omega))\rangle\,ds \geq \frac{t-T(\omega)}{t}
\cdot \lambda(\omega).
\]
Hence for a.a. $\omega\in A_1$ we have
\[
\liminf_{t\to\infty} \frac{1}{t} \int_0^t \langle
X(s,\omega),f(X(s,\omega))\rangle\,ds \geq  \lambda(\omega)>0,
\]
which implies that
\[
\liminf_{t\to\infty} \frac{1}{t} \int_0^t \langle
X(s),f(X(s))\rangle\,ds >0, \quad\text{a.s. on $A_1$}.
\]
This limit, taken together with the fact that $A_1$ is an event of
positive probability, contradicts \eqref{eq.aveXfXintto0}. Hence, it
must follow that $\mathbb{P}[A_1]=0$. This implies that
$\mathbb{P}[\overline{A}_1]=1$, or that $\liminf_{t\to\infty}
\|X(t)\|=0$ a.s. as required.

\end{document}